\begin{document}

\hfuzz=6pt

\widowpenalty=10000

\newtheorem{cl}{Claim}

\newtheorem{theorem}{Theorem}[section]
\newtheorem{proposition}[theorem]{Proposition}
\newtheorem{coro}[theorem]{Corollary}
\newtheorem{lemma}[theorem]{Lemma}
\newtheorem{definition}[theorem]{Definition}
\newtheorem{example}[theorem]{Example}
\newtheorem{remark}[theorem]{Remark}
\newcommand{\ra}{\rightarrow}
\renewcommand{\theequation}
{\thesection.\arabic{equation}}
\newcommand{\ccc}{{\mathcal C}}
\newcommand{\one}{1\hspace{-4.5pt}1}

 \def \Lips  {{   \Lambda}_{L}^{ \alpha,  s }(X)}
\def\BL {{\rm BMO}_{L}(X)}
\def\HAL { H^p_{L,{at}, M}(X) }
\def\HML { H^p_{L, {mol}, M}(X) }
\def\HM{ H^p_{L, {mol}, 1}(X) }
\def\Ma { {\mathcal M} }
\def\MM { {\mathcal M}_0^{p, 2, M, \epsilon}(L) }
\def\dMM { \big({\mathcal M}_0^{p,2, M,\epsilon}(L)\big)^{\ast} }
  \def\RR {  {\mathbb R}^n}
\def\HSL { H^p_{L, S_h}(X) }
\newcommand\mcS{\mathcal{S}}
\newcommand\mcB{\mathcal{B}}
\newcommand\D{\mathcal{D}}
\newcommand\C{\mathbb{C}}
\newcommand\N{\mathbb{N}}
\newcommand\R{\mathbb{R}}
\newcommand\G{\mathbb{G}}
\newcommand\T{\mathbb{T}}
\newcommand\Z{\mathbb{Z}}

\newcommand\CC{\mathbb{C}}
\newcommand\NN{\mathbb{N}}
\newcommand\ZZ{\mathbb{Z}}

\renewcommand\Re{\operatorname{Re}}
\renewcommand\Im{\operatorname{Im}}

\newcommand{\mc}{\mathcal}

\def\SL{\sqrt[m] L}
\newcommand{\la}{\lambda}
\def \l {\lambda}
\newcommand{\eps}{\varepsilon}
\newcommand{\pl}{\partial}
\newcommand{\supp}{{\rm supp}{\hspace{.05cm}}}
\newcommand{\x}{\times}

\newcommand\wrt{\,{\rm d}}

\title[Sharp spectral multipliers  ]{%
Sharp spectral multipliers for operators satisfying generalized  Gaussian  
estimates}
\author{Adam Sikora, \ Lixin Yan \ and Xiaohua Yao}

\address {Adam Sikora, Department of Mathematics, Macquarie University, NSW 2109, Australia}
\email{sikora@maths.mq.edu.au}
\address{Lixin Yan, Department of Mathematics, Sun Yat-sen (Zhongshan) University,
Guangzhou, 510275, P.R. China}
\email{mcsylx@mail.sysu.edu.cn}
\address{Xiaohua Yao, Department of Mathematics, Central China Normal University, Wuhan, 430079, P.R. China}
\email{yaoxiaohua@mail.ccnu.edu.cn }
\date{\today}
\subjclass[2000]{42B15, 42B20,   47F05.}
\keywords{ Spectral multipliers, restriction type condition,
non-negative self-adjoint  operator,
  heat semigroup,  $m$-th order generalized Gaussian estimates, space of homogeneous type.}

\begin{abstract}
Let $L$ be a non-negative self adjoint operator acting on $L^2(X)$
where $X$ is a space of homogeneous type. Assume that $L$ generates
a holomorphic semigroup $e^{-tL}$ whose kernels $p_t(x,y)$
satisfy  generalized $m$-th order Gaussian estimates.  In this article,
 we study singular and dyadically supported spectral multipliers for abstract
self-adjoint operators.   We show that in this setting sharp spectral
multiplier  results follow from Plancherel or Stein-Tomas type estimates.
These results are applicable to spectral multipliers for large classes of operators
 including  $m$-th order  elliptic differential operators with constant coefficients,
biharmonic operators with rough potentials and Laplace type operators acting on fractals.
\end{abstract}

\maketitle

 \tableofcontents

\section{Introduction}
\setcounter{equation}{0}

 This paper is devoted to the theory of spectral multipliers of self-adjoint differential type operators.
 This is a classical area of
harmonic analysis, which  has attracted a lot of attention during in the last  fifty years or so.
The literature devoted to the subject is so broad that it is impossible to provide complete and comprehensive
bibliography. Therefore we quote only a few  papers, which are directly related to our study and refer readers to
\cite{Blu, COSY, C3, ChS, CDMY, CowS, DM, DOS, F1, F2, Heb, Ho2, Ho3, Ho4, KU, S3, SS, Sog1, St2, T1, Tom}
and the references within for further relevant literature.

We consider a measure space $X$ and  a non-negative
self-adjoint  operator $L$ acting on~$L^2(X)$. Such an operator admits a spectral
resolution  $E_L(\lambda)$  and for  any  bounded  Borel function $F\colon [0, \infty)
\to {\mathbb C}$, one can define the operator $F(L)$
\begin{equation}\label{e1.1}
F(L)=\int_0^{\infty}F(\lambda) \wrt E_L(\lambda).
\end{equation}
By spectral theory the operator $F(L)$ is bounded on $L^2(X)$ and its norm is equal to $L$-essential
supremum norm of $F$. Spectral multiplier  theorems investigate under what conditions on function
$F$ the operator $F(L)$ can be extended to a bounded operator acting on Lebesgue spaces $L^p(X)$ for
 some range of $p$. Usually one looks for condition formulated in terms of differentiability of function $F$.
Spectral multiplier theorems are closely related to the problem of Bochner-Riesz sumability.
There are some subtle differences between two problems but the essential core  of Bochner-Riesz sumability
problem and the spectral multiplier  theorems is identical.

We would like to mention  three different aspects of spectral multipliers theory and Bochner-Riesz analysis.

\smallskip

$\bullet$ Dyadically  supported spectral multipliers. Here one assumes that a function $F\in C_c(a,b)$  for some
$0<a<b$ is compactly  supported and one tries to  find necessary conditions to ensure that
$$
\sup_{t>0}\|F(tL)\|_{p \to p}\le C < \infty
$$
for some $p\in [1, \infty]$  or range of such $p$, where $F(tL)$ is defined by the spectral resolution.
Usually the condition of $F$ is expressed in terms of Sobolev spaces $W^s_q(\R)$  and constant $C$ is proportional
to the corresponding Sobolev norm of function $F$. Compact support assumption could be misleading here
as it can be essentially weakened but it is convenient because  the dyadic decomposition trick is often used in the theory
of spectral multipliers. Usually the proof of sharp results of this type requires
Plancherel or restriction type estimates, which we discuss below, see Section~\ref{sec4}.

\smallskip

$\bullet$ Singular integral spectral multipliers, see Sections \ref{sec3} and \ref{sec5}. One considers auxiliary nonzero compactly supported
function $\eta \in C_c(a,b)$. Then for some Sobolev space $W^s_q(\R)$ one can define a ``local Sobolev norm'' by the formula
$$
\|F\|_{LW^s_{q}}= \sup_{t>0} \|\eta  (\cdot ) \, F(t\cdot)  \|_{W_q^s(\R)},
$$
where $\|F\|_{W_q^s(\R)}=\| (1-{\frac{d^2}{d x^2}})^{s/2} F\|_q$.
Up to a constant this definition does not depend  on a choice of the auxiliary function $\eta$ as long as it is a non zero function.
In singular spectral multipliers one wants to obtain  estimates of $L^p \to L^p $ or weak $(p,p)$ norm of the operator
$F(L)$ in terms of the norm $\|F\|_{LW^s_{q}}$. In principle one  expects  that compact spectral multipliers
imply singular spectral multipliers even though   one can construct examples where
$\sup_{t>0}\|\eta(L) F(tL)\|_{p \to p}\le C < \infty $ but $F(L)$ is unbounded, see well-known counterexample
of Littman, McCarthy and Rivi\`ere \cite{LMR}.

\smallskip

$\bullet$ The most essential point of spectral multiplier theorems and Bochner Riesz analysis is an investigation of
 of Plancherel or restriction type estimates. Such estimates  are essentially required to  obtain compactly supported
 spectral multiplier. They originate from classical Fourier analysis where one considers  so-called restriction problem: {\em
 describe all pairs  of exponent $(p,q)$ such that the restriction operator
 $$
 R_\lambda(f)(\omega) =\hat{f}(\lambda \omega)
 $$
 is bounded from $L^p(\R^n) \to L^q({\bf S}^{n-1})$.} Here $\hat{f}$ is the Fourier transform of $f$ and
 $\omega\in   {\bf S}^{n-1}$ is a point on the unit sphere.  It is not difficult to notice
 that if $E_{\sqrt{\Delta}}$ is the spectral resolution for the standard Laplace operator,
 then
 $$
d E_{\sqrt{\Delta}}(\lambda) =\frac{\lambda^{n-1}}{(2\pi)^n} R_\lambda^*R_\lambda
 $$
(compare \cite{GHS}).  Now if one knows (as it is the case in Stein-Tomas restriction estimates, see \cite{Tom})
that the restriction operator is bounded for some  pair $(p,2)$, then by $T^*T$ trick it
follows that  $\frac{d}{d\lambda} E_{\sqrt{\Delta}}(\lambda)$ extends to a bounded operator acting form
space $L^p(\R^n) \to L^{p'}(\R^n)$, where $p'$ is conjugate exponent of $p$ ($1/p+1/p'=1$).
Note that these  estimates can be expressed purely in terms of spectral resolutions of self-adjoint operator $L$.
Motivated by the example of the standard Laplace operator we introduce below condition ${\rm (ST^{q }_{p, 2, m})}$.
 One of most significant part of spectral multiplier results and Bochner-Riesz analysis is to prove
estimates of this type for some class of differential operators. Remarkable results of this type were
obtained in \cite{GHS}. Some other  examples are described in \cite{COSY}. In the case when $p=1$ the problem
quite often simplifies because of existence of underlying Plancherel measure and for example for homogeneous
operators efficient  restriction estimates  type results
are automatically true, see \cite{DOS}. To illustrate our abstract spectral multiplier results, we describe some well known examples of
operators which satisfy ${\rm (ST^{q }_{p, 2, m})} $ in Section \ref{appl5}. In Section \ref{appl5.3} we describe
 new restriction estimates of this type.

\smallskip

In this paper we are mainly focus on proving that appropriate restriction type estimates imply
sharp compactly supported spectral multiplier results and that   the  singular integral version follows
from compactly supported spectral multipliers for abstract self-adjoint operators for which the corresponding
heat kernels satisfy $m$-th order Gaussian bounds. We also discuss operators, which satisfy generalized Gaussian
estimates in the sense of  Blunck and Kunstmann, see e.g. \cite{BK3}. The case $p=1$ for such theory was
 comprehensively discussed in \cite{DOS}. However if $p\neq 1$ the problem requires essentially new approach.

Under assumption that $L$ satisfies finite speed propagation property, see \cite{CouS, S2}, similar results
 were considered in \cite{COSY}. However there are many interesting examples  of whole significant classes of
operators which do not satisfy finite speed propagation property for the wave equation but satisfy $m$-th order
Gaussian bound. For example $m$-th order differential operators or Laplace like operators defined on fractals,
see for example \cite{BP, Str1, Str2}. The results obtained in this paper can be applied to these operators.
Finite speed propagation property is equivalent to the  second order Gaussian bounds, see \cite{CouS, S2},
so our paper can be regarded as the generalization of \cite{COSY}. In particular our results apply to all examples discussed
there. Note however that we are not able to obtain endpoint results in the current setting.

Our proof  that compactly (dyadically) supported spectral multipliers imply singular integral multipliers is inspired by the work of Seeger
and Sogge \cite{S3, SS}. However,
  there is no assumption on the regularity
in variables $x$ and $y$ on the kernels $p_t(x,y)$ of the semigroup $e^{-tL}$,
thus techniques of Calder\'on--Zygmund
theory (\cite{S3, SS}) are not applicable. The lacking of smoothness of the kernel was indeed the main
obstacle   and it was overcome by
using   an approach to singular integral theory initiated by \cite{Heb}
and developed in \cite{DM, ACDH}.
 In this approach to obtain additional cancellation instead of subtracting some average of a function
 one subtracts appropriate multiplier of the operator $L$ applied to the considered function.

 In the sequel we always assume that considered ambient space is  a metric measure space
  $(X,d,\mu)$ with metric $d$ and Borel measure $\mu$.
We denote by
$B(x,r)=\{y\in X,\, {d}(x,y)< r\}$  the open ball
with centre $x\in X$ and radius $r>0$. We also  assume that the space $X$ is homogeneous
that is  it satisfies the doubling condition. It allows us to consider the homogeneous dimension of the
space $X$. To be more precise we put  $V(x,r)=\mu(B(x,r))$ the volume of $B(x,r)$ and we say that
$(X, d, \mu)$ satisfies the doubling property (see Chapter 3, \cite{CW})
if there  exists a constant $C>0$ such that \begin{eqnarray}
V(x,2r)\leq C V(x, r)\quad \forall\,r>0,\,x\in X. \label{e1.2}
\end{eqnarray}
If this is the case, there exist  $C, n$ such that for all $\lambda\geq 1$ and $x\in X$
\begin{equation}
V(x, \lambda r)\leq C\lambda^n V(x,r). \label{e1.3}
\end{equation}
In the sequel we want to consider $n$ as small as possible.
Note that in general one cannot take infimum over such exponents $n$ in \eqref{e1.3}.
In the Euclidean space with Lebesgue measure, $n$ corresponds to
the dimension of the space. In our results critical index is always expressed in terms of
homogeneous dimension $n$. Usually existence of
$s>n(1/2-1/p)$ derivatives of function $F$ is a sharp optimal condition in most of spectral
multiplier results, both compact and singular. However there is a subtle but of huge significance difference
 between existence of this number of derivatives in $L^2(\R)$ versus $L^\infty(\R)$.
 Improvement of the results from $L^\infty$ to
$L^2$ always requires some form of restriction or Plancherel type of estimates.
This  $L^2$ spectral multiplier results essentially corresponds to calculation of
critical exponent in  Bochner-Riesz means analysis and is related to  Bochner-Riesz
conjecture. Obtaining sharp results in this context is regarded as one of most crucial tasks
in harmonic analysis.

\section{Preliminaries}
\setcounter{equation}{0}

We commence with describing  our notation and basic assumptions.  We   often just use $B$ instead of $B(x, r)$.
Given $\lambda>0$, we write $\lambda B$ for the $\lambda$-dilated ball
which is the ball with the same centre as $B$ and radius $\lambda r$.
For $1\le p\le+\infty$, we denote the
norm of a function $f\in L^p(X)$ by $\|f\|_p$, by $\langle .,. \rangle$
the scalar product of $L^2(X)$, and if $T$ is a bounded linear operator from $
L^p(X)$ to $L^q(X)$, $1\le p, \, q\le+\infty$, we write $\|T\|_{p\to q} $ for
the  operator norm of $T$.
Given a  subset $E\subseteq X$, we  denote by  $\chi_E$   the characteristic
function of   $E$ and  set
$$
P_Ef(x)=\chi_E(x) f(x).
$$
For every  $B=B(x_B, r_B)$,  set $A(x_B, r_B, 0)=B$ and
$$
A(x_B, r_B, j)=B(x_B, (j+1)r_B)\backslash B(x_B, jr_B), \ \ \ \ \ j=1, 2, \ldots
$$
For a given  function $F: {\mathbb R}\to {\mathbb C}$ and $R>0$, we define the
function
$\delta_RF:  {\mathbb R}\to {\mathbb C}$ by putting
 $\delta_RF(x)= F(Rx).$  Given $p\in [1, \infty]$, the conjugate exponent $p'$
is defined by $1/p +1/p' =1.$
We will also use the
Hardy-Littlewood maximal operator  ${\mathcal M}f$ which is defined by
$$
{\mathcal M}f(x)=\sup_{B\ni x}{\frac{1}{V(B)}}\int_{B}
|f(y)|d\mu(y),
$$
where the {\it sup} is taken over all balls $B$ containing $x$.

 \medskip

\noindent
{\bf 2.1.\, Generalized Gaussian   estimates and Davies-Gaffney estimates.}
We now described the notion of the Generalized Gaussian estimates introduced by Blunck and Kunstmann,
see \cite{BK1, BK2, BK3}.
  Consider a non-negative self-adjoint operator $L$  and numbers
  $m\geq 2$ and $ 1\leq p\leq 2\leq q\leq \infty$ with $p<q$.
  We say that   the semigroup generated by $L$, $e^{-tL}$  satisfies
{\it generalized Gaussian  $(p,q)$-estimates  of order $m$},
if there exist constants $C, c>0$ such that
for all $t>0$,  and all $x,y\in X,$
$$\leqno{\rm(GGE_{p, q, m})} \hspace{1cm}
\big\|P_{B(x, t^{1/m})} e^{-tL} P_{B(y, t^{1/m})}\big\|_{p\to {q}}\leq
C V(x,t^{1/m})^{-({\frac{1}{ p}}-{1\over q})} \exp\Big(-c\Big({d(x,y) \over    t^{1/m}}\Big)^{m\over m-1}\Big).
$$

\medskip

Note that condition ${\rm(GGE_{p, q, m})}$ for the special case $(p, q)=(1, \infty)$ is equivalent to
$m$-th order   Gaussian estimates (see   Proposition 2.9, \cite{BK1}). This means that the
semigroup $e^{-tL}$ has integral kernels $p_t(x,y)$ satisfying the following estimates
$$\leqno{\rm (GE_m)} \hspace{1cm}
 |p_t(x,y)|\leq {C\over V(x,t^{1/m})} \exp\Big(-c\Big({d^m(x,y) \over    t}\Big)^{1\over m-1}\Big) \ \
{\rm for \ } x,y\in X, \ t>0.
$$
There are numbers of operators which satisfy generalized Gaussian estimates and, among them,
 there exist many for which classical Gaussian estimates ${\rm (GE_m)}$ fail.
  This happens, e.g., for Schr\"odinger operators with rough
 potentials \cite{ScV}, second order elliptic operators with rough  lower order terms \cite{LSV}, or
 higher order elliptic operators with bounded measurable coefficients
 \cite{D2}.

The following result originally  stated  in  \cite[Lemma 2.6]{U} (see also \cite[Theorem 2.1]{BL}) shows that generalized Gaussian estimates can be extended from
real times $t>0$ to complex times   $z\in {\mathbb C}$ with ${\rm Re} z>0$.

\begin{lemma}\label{le2.1} Let $m\geq 2$ and $ 1\leq p\leq 2\leq q\leq \infty$, and $L$ be a
 non-negative self-adjoint operator   on $L^2(X)$. Assume that  there exist constants $C, c>0$ such that
for all $t>0$,  and all $x,y\in X,$
\begin{eqnarray*}
\big\|P_{B(x, t^{1/m})} e^{-tL} P_{B(y, t^{1/m})}\big\|_{p\to {q}}\leq
 C V(x,t^{1/m})^{-({1\over p}-{1\over q})} \exp\Big(-c\Big({d(x,y) \over    t^{1/m}}\Big)^{m\over m-1}\Big).
\end{eqnarray*}
Let $r_z=({\rm Re}\, z)^{{1\over m}-1 } |z|$
for each  $z\in{\mathbb C}$ with ${\rm Re}\, z >0$.
\begin{itemize}

\item[(i)]  There exist two positive constants $C' $ and $ c'$ such that for all $r>0, x\in X, $
 and $z\in{\mathbb C}$ with ${\rm Re}\, z >0$
\begin{eqnarray*}
 &&\hspace{-1.5cm}\big\|P_{B(x, r)}    e^{-zL} P_{B(y, r)}\big\|_{p \to q}\\
 &\leq &C' V(x,r)^{-({1\over p }-{1\over q})} \Big(1+{r\over r_z}\Big)^{n({1\over p }-{1\over q})}
  \Big({|z|\over {\rm Re}\,  z}\Big)^{n({1\over p }-{1\over q})}
  \exp\Big(-c' \Big({ d(x,y)   \over   r_z}  \Big)^{m\over m-1} \Big).
\end{eqnarray*}

\item[(ii)]  There exist two positive constants $C''$ and $ c''$ such that for all $r>0, x\in X, k\in{\mathbb N}$
 and $z\in{\mathbb C}$ with ${\rm Re}\, z >0$
\begin{eqnarray*}
 &&\hspace{-1.5cm}\big\|P_{B(x, r)}    e^{-zL} P_{A(x, r, k)}\big\|_{p \to q}\\
 &\leq &C'' V(x,r)^{-({1\over p }-{1\over q})} \Big(1+{r\over r_z}\Big)^{n({1\over p}-{1\over q})}
  \Big({|z|\over {\rm Re} \, z}\Big)^{n({1\over p }-{1\over q})} k^n
  \exp\Big(-c'' \Big({ r   \over    r_z} k\Big)^{m\over m-1} \Big).
\end{eqnarray*}
\end{itemize}
 \end{lemma}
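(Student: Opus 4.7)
The plan is to extend the real-time off-diagonal bound to complex times via an analytic-continuation / composition argument, and then derive the annular version (ii) from (i) by covering the annulus with balls and summing. The lemma is quantitative so the scaling parameter $r_z=(\mathrm{Re}\,z)^{1/m-1}|z|$ must emerge naturally from optimization.

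\medskip
\noindent\textbf{Step 1 (reduction and ingredients).} By self-adjointness of $L$, the hypothesis is symmetric in $p$ and $q'$, so duality plus complex interpolation with the trivial contraction $\|e^{-tL}\|_{2\to 2}\le 1$ yields off-diagonal bounds at the $L^p\to L^2$ and $L^2\to L^q$ endpoints,
\begin{equation*}
\bigl\|P_{B(x,t^{1/m})}\, e^{-tL}\, P_{B(y,t^{1/m})}\bigr\|_{p\to 2}
\lesssim V(x,t^{1/m})^{-(1/p-1/2)}\exp\!\Bigl(-c\bigl(d(x,y)/t^{1/m}\bigr)^{m/(m-1)}\Bigr),
\end{equation*}
and symmetrically for $L^2\to L^q$, both for real $t>0$.

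\medskip
\noindent\textbf{Step 2 ($L^2$ Davies--Gaffney for complex $z$).} The heart of the argument is to show
\begin{equation*}
\bigl\|P_E\, e^{-zL}\, P_F\bigr\|_{2\to 2}\;\lesssim\;\exp\!\Bigl(-c\bigl(d(E,F)/r_z\bigr)^{m/(m-1)}\Bigr),\qquad \mathrm{Re}\,z>0.
\end{equation*}
Following Davies, I would introduce a bounded Lipschitz weight $\varphi$ with $\|\nabla\varphi\|_\infty\le 1$ (in the metric sense) and consider the twisted family $T_z=e^{\alpha\varphi}P_E e^{-zL}P_F e^{-\alpha\varphi}$. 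This family is $L^2$-analytic in $\{\mathrm{Re}\,z>0\}$, bounded by $1$ on the imaginary axis (since $e^{-i\tau L}$ is unitary on $L^2$ and the multipliers $e^{\pm\alpha\varphi}$ on disjoint supports cancel), and by the real-time Davies--Gaffney estimate (itself derived from Step 1 by $L^2$-duality/composition) grows like $\exp(C\alpha^m\,\mathrm{Re}\,z)$ on the real axis. Phragm\'en--Lindel\"of on a sector then gives $\|T_z\|_{2\to2}\lesssim \exp(C\alpha^m\,|z|^m/(\mathrm{Re}\,z)^{m-1})$. Choosing $\varphi$ to peak the exponential gain at $d(E,F)$ and optimizing in $\alpha$ produces precisely $r_z=(\mathrm{Re}\,z)^{1/m-1}|z|$.

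\medskip
\noindent\textbf{Step 3 (three-factor composition for (i)).} Write $z=2s+w$ with $s:=\mathrm{Re}\,z/4>0$ and $\mathrm{Re}\,w=\mathrm{Re}\,z/2>0$, so that
$e^{-zL}=e^{-sL}\,e^{-wL}\,e^{-sL}$.
Insert a finite-overlap partition of unity $\sum_k P_{B_k}=I$ on $L^2$ built from balls of radius $r_s=s^{1/m}$ (or $r$, see below), and estimate
\begin{equation*}
\|P_{B(x,r)}e^{-zL}P_{B(y,r)}\|_{p\to q}\le \sum_{k,\ell}\|P_{B(x,r)}e^{-sL}P_{B_k}\|_{p\to 2}\,\|P_{B_k}e^{-wL}P_{B_\ell}\|_{2\to 2}\,\|P_{B_\ell}e^{-sL}P_{B(y,r)}\|_{2\to q}.
\end{equation*}
The outer two factors are controlled by Step 1, the middle factor by Step 2, and the resulting sum is convergent by the exponential decay together with doubling. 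One has to absorb the volume bookkeeping: if $r\le r_z$ nothing extra is needed, while if $r>r_z$ one covers $B(x,r)$ by roughly $(r/r_z)^n$ sub-balls of radius $\sim r_z$, which produces the factor $(1+r/r_z)^{n(1/p-1/q)}$. The extra $(|z|/\mathrm{Re}\,z)^{n(1/p-1/q)}$ comes from rescaling $V(x,s^{1/m})$ to $V(x,r_z)$ using \eqref{e1.3} and the identity $r_z/s^{1/m}\sim(|z|/\mathrm{Re}\,z)$.

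\medskip
\noindent\textbf{Step 4 (passage to annuli, (ii)).} Cover $A(x,r,k)$ by a family of balls $\{B(y_j,r)\}_{j}$ with $d(x,y_j)\ge kr$ and $\#\{y_j\}\lesssim k^{n-1}$ via the doubling property (or $k^n$ if one prefers a crude bound), apply part (i) to each pair $(B(x,r),B(y_j,r))$, and sum. Since $d(x,y_j)\ge kr$, the Gaussian factor becomes $\exp(-c''(kr/r_z)^{m/(m-1)})$, and the linear count contributes the $k^n$. This reproduces (ii) exactly.

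\medskip
\noindent\textbf{Main obstacle.} The delicate point is Step 2: producing the sharp $r_z$ and not a weaker quantity. Both the choice of the Lipschitz weight and the analytic extension via Phragm\'en--Lindel\"of must be performed carefully so that the three different regimes ($|z|\sim\mathrm{Re}\,z$, $|z|\gg\mathrm{Re}\,z$, and the boundary case) interpolate consistently; this is precisely what the formula $r_z=(\mathrm{Re}\,z)^{1/m-1}|z|$ captures. Once Step 2 is in hand, Steps 3--4 are routine composition and covering arguments in the doubling setting.
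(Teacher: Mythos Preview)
The paper does not actually prove this lemma: it refers the reader to Uhl's thesis \cite{U} and only remarks that the argument rests on the Phragm\'en--Lindel\"of theorem. Your overall architecture---Phragm\'en--Lindel\"of for an $L^2$ complex-time Davies--Gaffney bound, then the three-factor composition $e^{-zL}=e^{-sL}e^{-wL}e^{-sL}$ to pass to $L^p\to L^q$, then a ball-covering of the annulus for (ii)---is consistent with that hint, and Steps~3--4 are routine once Step~2 is available.

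Step~2 as you wrote it, however, has a genuine gap. With $\varphi$ Lipschitz and constant on $E$ and $F$ (the only choice that ``separates'' them), say $\varphi|_E=0$ and $\varphi|_F=d(E,F)$, one has $e^{\alpha\varphi}P_E=P_E$ and $P_Fe^{-\alpha\varphi}=e^{-\alpha d(E,F)}P_F$, so your twisted operator collapses to the scalar multiple $T_z=e^{-\alpha d(E,F)}P_Ee^{-zL}P_F$ and the argument is circular; the phrase ``multipliers on disjoint supports cancel'' is not a mechanism. The actual Davies bound $\|e^{\alpha\varphi}e^{-tL}e^{-\alpha\varphi}\|_{2\to2}\le e^{C\alpha^mt}$ concerns the \emph{unprojected} twisted semigroup, but that operator is \emph{not} bounded on the imaginary axis (the twisted generator is not self-adjoint), so your half-plane Phragm\'en--Lindel\"of is unavailable there. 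Moreover, even granting your setup, Phragm\'en--Lindel\"of with the bounds you state would yield $\exp(C\alpha^m\,\mathrm{Re}\,z)$, not $\exp(C\alpha^m|z|^m/(\mathrm{Re}\,z)^{m-1})$; the latter looks reverse-engineered from the desired $r_z$. The correct argument (as in \cite{U}) applies Phragm\'en--Lindel\"of directly to $z\mapsto\langle P_Ee^{-zL}P_Fu,v\rangle$ after multiplication by a carefully chosen analytic auxiliary factor and requires genuine care when $m>2$, since the natural candidate $\exp(c\,d^{m/(m-1)}z^{-1/(m-1)})$ no longer has modulus $1$ on the imaginary axis. You are right that this is the main obstacle; as written, it is a gap rather than a sketch.
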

 \begin{proof}
 For the 
 detailed proof
 we refer  readers to \cite{U}.
  Here we only  want to mention that the proof of Lemma \ref{le2.1} relies on the Phragm\'en-Lindel\"of theorem.
 \end{proof}

 \medskip
  Next suppose that $m\geq 2$.
  We say that the semigroup $e^{-tL}$ generated by non-negative self-adjoint operator $L$ satisfies
 {\it $m$-th order Davies-Gaffney  estimates},
if there exist constants $C, c>0$ such that
for all $t>0$,  and all $x,y\in X,$
$$\leqno{\rm(DG_{m})} \hspace{1cm}
\big\|P_{B(x, t^{1/m})} e^{-tL} P_{B(y, t^{1/m})}\big\|_{2\to {2}}\leq
C   \exp\Big(-c\Big({d(x,y) \over    t^{1/m}}\Big)^{m\over m-1}\Big).
$$

 \medskip

Note that if  condition ${\rm (GGE_{p, q, m})}$ holds for
 for some $1\leq p\leq 2\leq q\leq \infty$ with $p<q$,
then  the semigroup
$e^{-tL}$  satisfies estimate ${\rm (DG_m)}$.

The following lemma describes a useful consequence of $m$-order
Davies-Gaffney estimates.

 \medskip

\begin{lemma}\label{le2.2}
Let $m\geq 2$ and  let  $e^{-tL}$ be a semigroup generated by a non-negative, self-adjoint
operator   $L$ acting on $L^2(X)$ satisfying Davies-Gaffney  estimates ${\rm (DG_m)}$.
Then for every  $M>0$, there exists a constant $C=C(M)$ such that
for every $j=2,3,\ldots$

\begin{eqnarray}\label{e2.1} \hspace{1cm}
 \big\|P_{B}    F(\sqrt[m]{L}) P_{A(x_B, r_B, j)}\big\|_{2\to 2}\leq
 C j^{-M}  \big(R r_B)^{-(M+n)}   \|\delta_R F\|_{W^{M+n+1}_2}
\end{eqnarray}
for all  balls $B\subseteq  X$, and all Borel functions $F$  such that supp $F\subseteq [{R/4}, R]$.
 \end{lemma}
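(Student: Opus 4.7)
The plan is: (i) reduce to the case $R=1$ by rescaling $L$ and $d$, (ii) represent $F(\sqrt[m]{L})$ as a Fourier integral of complex-time heat operators $e^{-zL}$ with $\Re z > 0$, and (iii) combine Lemma \ref{le2.1} with Cauchy-Schwarz to extract both the $j^{-M}r_B^{-(M+n)}$ decay and the Sobolev weight on $\delta_R F$.

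For step (i), I would replace $L$ with $\tilde L = L/R^m$ and $d$ with $\tilde d = Rd$. The doubling dimension $n$ is preserved, and a short computation shows that $\tilde L$ still satisfies $(DG_m)$ relative to $\tilde d$. Since $F(\sqrt[m]{L}) = (\delta_R F)(\sqrt[m]{\tilde L})$ with $\delta_R F$ supported in $[1/4,1]$, it suffices to prove the inequality in the case $R=1$, namely $\|P_B F(\sqrt[m]{L}) P_{A(x_B,r_B,j)}\|_{2 \to 2} \leq C j^{-M} r_B^{-(M+n)} \|F\|_{W^{M+n+1}_2}$ for $F$ supported in $[1/4,1]$.

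For step (ii), introduce $H(\mu) := F(\mu^{1/m})\, e^{\mu}$, extended by zero for $\mu < 0$. Then $H$ is compactly supported in $[(1/4)^m, 1]$, and a chain-rule argument shows $\|H\|_{W^s_2(\R)} \leq C_s \|F\|_{W^s_2(\R)}$ for $s \leq M+n+1$. Fourier inversion applied to $H$ together with the identity $F(\lambda) = H(\lambda^m)\, e^{-\lambda^m}$ and the spectral theorem give
\[
F(\sqrt[m]{L}) \;=\; \frac{1}{2\pi}\int_{\R} \widehat H(\xi)\, e^{-(1-i\xi)L}\, \wrt\xi .
\]

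For step (iii), Lemma \ref{le2.1}(ii) applied with $p=q=2$ to $z = 1-i\xi$ (so $\Re z = 1$ and $r_z = (1+\xi^2)^{1/2}$) yields
\[
\|P_B\, e^{-(1-i\xi)L}\, P_{A(x_B,r_B,j)}\|_{2\to 2} \;\leq\; C j^n \exp\!\left(-c\left(\frac{r_B j}{(1+\xi^2)^{1/2}}\right)^{m/(m-1)}\right).
\]
The triangle inequality and Cauchy-Schwarz with weight $(1+\xi^2)^{M+n+1}$ then reduce matters to estimating
\[
I \;:=\; \int_{\R} (1+\xi^2)^{-(M+n+1)} \exp\!\left(-2c\left(\frac{r_B j}{(1+\xi^2)^{1/2}}\right)^{m/(m-1)}\right)\wrt\xi .
\]
In the main regime $r_B j \geq 1$, the substitution $v = (1+\xi^2)^{1/2}/(r_B j)$ yields $I \leq C (r_B j)^{-(2M+2n+1)}$, producing a bound slightly stronger than claimed. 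In the complementary regime $r_B j < 1$, which forces $r_B < 1/2$ since $j \geq 2$, the right-hand side $j^{-M}r_B^{-(M+n)}$ is bounded below by $1$ and the trivial estimate $\|F(\sqrt[m]{L})\|_{2\to 2} \leq \|F\|_\infty \leq C\|F\|_{W^{M+n+1}_2}$ (Sobolev embedding on $\R$) closes the argument. The principal technical hurdle is the evaluation of $I$, which requires splitting the $\xi$-integration according to whether $(1+\xi^2)^{1/2}$ is dominated by $r_B j$; apart from this, the proof is routine bookkeeping.
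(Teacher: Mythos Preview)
Your proposal is correct and follows essentially the same strategy as the paper: write $F(\sqrt[m]{L})$ via Fourier inversion as a superposition of complex-time heat operators $e^{-(1-i\tau)R^{-m}L}$, apply Lemma~\ref{le2.1}(ii) with $p=q=2$, and close with Cauchy--Schwarz against a Sobolev weight of order $M+n+1$. The only difference is the order of operations in step~(iii). The paper first replaces the exponential decay by the pointwise polynomial bound
\[
j^n\exp\!\Big(-c\Big(\frac{Rjr_B}{\sqrt{1+\tau^2}}\Big)^{m/(m-1)}\Big)\;\leq\; C_M\, j^{-M}(Rr_B)^{-(M+n)}(1+\tau^2)^{(M+n)/2},
\]
valid for \emph{all} values of $Rjr_B$, and only then applies Cauchy--Schwarz; this makes the remaining $\tau$-integral simply $\int(1+\tau^2)^{-1}\wrt\tau<\infty$ and renders both your rescaling to $R=1$ and your case split $r_Bj\gtrless 1$ unnecessary. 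Your route is slightly longer but arrives at the same place.
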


\begin{proof} Let $G(\lambda)=\delta_RF(\sqrt[m]{\lambda}) e^{\lambda}.$ In virtue of the Fourier inversion formula
$$
G(L/R^m)e^{-L/R^m}={1\over 2\pi} \int_{\mathbb R} e^{(i\tau-1)R^{-m}L} {\hat G}(\tau)d\tau
$$
so
$$
 \|P_{B}    F(\sqrt[m]{L}) P_{A(x_B, r_B, j)}\|_{2\to 2} \leq
{1\over 2\pi} \int_{\R} |{\hat G}  (\tau)| \,   \|P_{B}   e^{(i\tau-1)R^{-m}L} P_{A(x_B, r_B, j)} \|_{2\to 2}
  d\tau.
$$
  By (ii) of Lemma~\ref{le2.1} (with $r_z =\sqrt{1+\tau^2}/R$),
 \begin{eqnarray*}
\|P_{B}   e^{(i\tau-1)R^{-m}L} P_{A(x_B, r_B, j)} \|_{2\to 2}&\leq&
Cj^n
  \exp\bigg(-c\Big({ Rj r_B \over    \sqrt{1+\tau^2}}\Big)^{m\over m-1}   \bigg)\\
  &\leq& C_M  j^n   \Big( {  R jr_B \over \sqrt{1+\tau^2}    } \Big)^{-M-n}\\
  &\leq& C j^{-M} \big(1+\tau^2)^{ M+n \over 2}  \big(R r_B)^{-(M+n)}.
\end{eqnarray*}
Therefore (compare \cite[(4.4)]{DOS})
  \begin{eqnarray*}
   &&\hspace{-1cm}\|P_{B}    F(\sqrt[m]{L}) P_{A(x_B, r_B, j)}\|_{2\to 2}\\
   &\leq&
 Cj^{-M}  \big(R r_B)^{-(M+n)}
   \int_{\R} |{\hat G}  (\tau)|  \big(1+\tau^2)^{ M+n \over 2}  d\tau\nonumber\\
   &\leq&
 C j^{-M}  \big(R r_B)^{-(M+n)}
   \Big(\int_{\R} |{\hat G}  (\tau)|^2  \big(1+\tau^2)^{M+n+1}   d\tau\Big)^{1/2}
    \Big(\int_{\R} \big(1+\tau^2)^{-1}  d\tau\Big)^{1/2}\nonumber\\
  &\leq&    C  j^{-M}  \big(R r_B)^{-(M+n)}   \|G\|_{W^{M+n+1}_2}.
\end{eqnarray*}
 However, supp $F\subseteq [R/4, R]$ and supp $\delta_R F\in [1/4, 1]$ so
 $$
  \|G\|_{W^{M+n+1}_2} \leq C \|\delta_R F\|_{W^{M+n+1}_2}.
 $$
This ends the proof of Lemma \ref{e2.1}.
\end{proof}

\medskip

\noindent
{\bf 2.2.\, The Stein-Tomas restriction type condition.}
Consider a non-negative self-adjoint operator $L$  and numbers
$p $ and $q$ such that $1\leq p< 2$ and $1\leq
q\leq\infty$. We say that  $L$ satisfies the
    {\it  Stein-Tomas restriction type condition}   if:
  for any $R>0$ and all Borel functions $F$ such that $\supp F \subset [0, R]$,
$$
\big\|F(\SL)P_{B(x, r)} \big\|_{p\to 2} \leq CV(x,
r)^{{1\over 2}-{1\over p}} \big( Rr \big)^{n({1\over p}-{1\over
2})}\big\|\delta_RF\big\|_{q}
\leqno{\rm (ST^{q}_{p, 2, m})}
$$
  for all
  $x\in X$ and all $r\geq 1/R$.

\medskip

An interesting  alternative approach to restriction type estimates is
investigated by
Kunnstman and Uhm in \cite{KU, U}, see (4.3) of \cite{KU} and
the Plancharel
condition (5.30) of \cite{U}. Let us point out that  estimate ${\rm (ST^{2}_{p, 2, m})}$
implies sharp Bochner Riesz results for all $p$.

 Note that   if condition ${\rm (ST^{q }_{p, 2, m})} $ holds for  some $q\in [1, \infty)$,
 then ${\rm (ST^{\tilde{q}}_{p, 2, m})} $  holds for all $\tilde{q}\geq q$ including the case
 $\tilde{q}=\infty$.

The next lemma shows that if ${q}=\infty$ then condition  ${\rm (ST^{{\infty}}_{p, 2, m})} $
follows form the standard elliptic estimates.

\medskip

\begin{proposition}\label {prop2.3} Suppose that $(X, d, \mu)$ satisfies
 property  \eqref{e1.2} and \eqref{e1.3}.  Let  $1\leq p<2 $ and
$N>n(1/p-1/2)$. Then   ${\rm (ST^{\infty}_{p, 2, m})}$   is equivalent
to each of the following conditions:
\begin{itemize}
\item[(a)]  For all $x\in X$ and  $r\geq t>0$    
$$
\big\|e^{-t^mL}P_{B(x, r)}\big\|_{p\to 2} \leq
CV(x, r)^{{1\over 2}-{1\over p}} \Big({r\over  {t}}\Big)^{n({1\over p}-{1\over 2})}.
\leqno{\rm (G_{p,2,m})}
$$

\item[(b)]
For all $x\in X$ and    $r\geq  t >0$   
$$
\big\|(I+t\SL)^{-N}P_{B(x, r)}\big\|_{p\to 2} \leq
CV(x, r)^{{1\over 2}-{1\over p}} \left({r\over  {t}}\right)^{n({1\over p}-{1\over 2})}.
\leqno{\rm (E_{p, 2,m})}
$$
\end{itemize}
\end{proposition}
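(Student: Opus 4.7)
My plan is to prove the two biconditionals $(ST^{\infty}_{p,2,m})\Leftrightarrow(G_{p,2,m})$ and $(ST^{\infty}_{p,2,m})\Leftrightarrow(E_{p,2,m})$; the four implications all follow a single template. To go \emph{from} $(ST^\infty)$ to $(G)$ or $(E)$ I will dyadically decompose the non-compactly supported symbol (either $e^{-t^m\lambda^m}$ or $(1+t\lambda)^{-N}$) into pieces supported on annuli of scale $2^k/t$, apply $(ST^\infty)$ to each piece, and sum a geometric series. To go \emph{back} into $(ST^\infty)$ I will use the following elementary but effective factorisation: since $F$ is supported on $[0,R]$, when $t=1/R$ both $e^{t^m\lambda^m}$ and $(1+t\lambda)^N$ are uniformly bounded on $\supp F$, so $F(\SL)$ splits as a uniformly $L^2$-bounded spectral multiplier composed with $e^{-t^mL}$ or with $(I+t\SL)^{-N}$.

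\textbf{The decomposition implications.} Fix $r\ge t>0$, put $R_k=2^{k+1}/t$, and split $[0,\infty)$ as $[0,2/t]\cup\bigsqcup_{k\ge 1}(2^k/t,2^{k+1}/t]$. Let $F_k$ denote either $e^{-(t\lambda)^m}$ or $(1+t\lambda)^{-N}$ restricted to the $k$th interval. In either case $\supp F_k\subset[0,R_k]$, the constraint $r\ge t\ge 1/R_k$ holds for all $k\ge 0$, and a direct computation gives $\|\delta_{R_k}F_k\|_\infty\le\epsilon_k$ with $\epsilon_k=Ce^{-c2^{mk}}$ in the first case and $\epsilon_k=C2^{-Nk}$ in the second. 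Applying $(ST^\infty)$ to each piece and summing,
\begin{equation*}
\sum_{k\ge 0}\|F_k(\SL)P_{B(x,r)}\|_{p\to 2}\le CV(x,r)^{\frac12-\frac1p}\Big(\frac{r}{t}\Big)^{n(\frac1p-\frac12)}\sum_{k\ge 0}2^{kn(\frac1p-\frac12)}\epsilon_k.
\end{equation*}
The Gaussian series is trivially summable, and the polynomial one converges precisely because $N>n(1/p-1/2)$; this is the only place in the proof where the hypothesis on $N$ is used.

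\textbf{The factorisation implications.} Given Borel $F$ with $\supp F\subset[0,R]$ and $r\ge 1/R$, set $t=1/R$ and define $G_1(\lambda)=F(\lambda)e^{t^m\lambda^m}$, $G_2(\lambda)=F(\lambda)(1+t\lambda)^N$. Both are supported in $[0,R]$ with $\|G_i\|_\infty\le C\|\delta_RF\|_\infty$ because $tR=1$. The spectral identities $F(\SL)=G_1(\SL)\,e^{-t^mL}=G_2(\SL)\,(I+t\SL)^{-N}$, combined with the inequality $\|G_i(\SL)\|_{2\to 2}\le\|G_i\|_\infty$ and with $(G)$ or $(E)$ applied on the ball $B(x,r)$, produce the bound $(ST^\infty)$ with the correct prefactor $(Rr)^{n(1/p-1/2)}=(r/t)^{n(1/p-1/2)}$.

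\textbf{Main obstacle.} No real analytic difficulty arises: the content of the proposition is essentially the \emph{algebraic} interchangeability of the symbols $\chi_{[0,R]}$, $e^{-t^m\lambda^m}$, and $(1+t\lambda)^{-N}$ at the scale $t\sim 1/R$, mediated by their mutual boundedness and reciprocity on $[0,R]$. The only care required is the dyadic bookkeeping in the summation step and the observation that $\|G_i(\SL)\|_{2\to 2}\le\|G_i\|_\infty$ for general bounded Borel $G_i$, which is just the spectral theorem for the non-negative self-adjoint operator $L$.
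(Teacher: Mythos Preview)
Your proof is correct and is precisely the standard argument: the paper itself omits the details entirely, referring the reader to the second-order case in \cite{COSY}, and your dyadic-decomposition/factorisation template is exactly the natural adaptation to order $m$. No gaps.
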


\begin{proof} The proof is originally given in \cite{COSY} only second-order operators. However,
with some minor modifications, the proof can be adapted to the $m$th-order version,  and we omit the detail here.
\end{proof}

\medskip

\medskip
The following lemma is a standard known result in the theory of spectral multipliers of non-negative selfadjoint
operators and it is a version of \cite[Lemma 4.5]{COSY} adjusted to the setting of $m$-order operators so we
use the same notation.

\begin{lemma}\label{le2.4}
Suppose  that
 operator $L$  is a non-negative self-adjoint operator $L$ on $L^2(X)$ satisfying
Davies-Gaffney estimates ${\rm (DG_m)}$  and condition ${\rm (G_{p_0,2, m})}$ for some
  $1\leq p_0< 2$.
\begin{itemize}
\item[(a)]
Assume in addition that $F$ is  an even bounded Borel function  such that
$$
\sup_{t>0}\|\eta\delta_tF\|_{C^k}<\infty
$$
for some integer $k> n/2 + 1$ and some non-trivial function $\eta\in C_c^{\infty}(0, \infty)$.
Then the operator  $F(\SL)$ is bounded on $L^{p}(X)$ for all $p_0<p<p_0'$. \\

\item[(b)] Assume in addition that $\psi $  be  a function in $
{\mathscr S} ({\mathbb{R}})$ such that $\psi(0)=0$. Define the
quadratic functional for $f\in L^2(X)$
\begin{eqnarray*}
{\mathcal G}_L(f)(x)=\Big( \sum_{j\in{\mathbb Z}}  |\psi(2^j\SL)f  |^2\Big)^{1/2}.
\end{eqnarray*}
Then ${\mathcal G}_L$ is bounded on $L^{p}(X)$ for all $p_0<p<p_0'.$
\end{itemize}
\end{lemma}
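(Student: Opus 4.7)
The plan is to prove (a) by a dyadic decomposition of $F$ combined with a Calder\'on--Zygmund argument adapted to the semigroup in the style of Hebisch, Duong--McIntosh and Auscher--Coulhon--Duong--Hofmann, and then to deduce (b) from (a) via a standard Rademacher-functions linearization of the square function.

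For part (a), fix $\phi\in C_c^\infty(0,\infty)$ with $\supp\phi\subset[1/2,2]$ and $\sum_{j\in\Z}\phi(2^{-j}\lambda)=1$ for $\lambda>0$, and decompose $F(\sqrt[m]{\cdot})=\sum_{j\in\Z}F_j$ with $F_j=\phi(2^{-j}\sqrt[m]{\cdot})F(\sqrt[m]{\cdot})$ supported in the dyadic annulus $\{2^{j-1}\le\sqrt[m]{\lambda}\le 2^{j+1}\}$. The hypothesis $\sup_{t>0}\|\eta\delta_tF\|_{C^k}<\infty$ with $k>n/2+1$ transfers to uniform control of the relevant Sobolev norms of the rescaled pieces $\delta_{2^j}F_j$, which is exactly what Lemma~\ref{le2.2} consumes. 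Composing that off-diagonal $L^2\to L^2$ bound with the elliptic estimate $(E_{p_0,2,m})$ of Proposition~\ref{prop2.3} (equivalent to the assumed $(G_{p_0,2,m})$) yields, for every ball $B$ of radius $r_B$ and every annulus index $k\ge 2$, a bound of the schematic form
\begin{equation*}
\|P_B F_j(\SL) P_{A(x_B,r_B,k)}\|_{p_0\to 2}\le C\,k^{-M}V(x_B,r_B)^{1/2-1/p_0}(1+2^j r_B)^{n(1/p_0-1/2)}.
\end{equation*}
Using these annular estimates I run the operator-theoretic Calder\'on--Zygmund decomposition of \cite{Heb,DM,ACDH}: given $f$ and height $\alpha$, split $f=g+\sum_i b_i$ relative to cubes $Q_i$ of radius $r_i$; handle the good part on $L^2$ by spectral theory, and for each bad atom write
\begin{equation*}
F(\SL)b_i = F(\SL)\bigl(I-e^{-r_i^m L}\bigr)^N b_i + F(\SL)\bigl[I-(I-e^{-r_i^m L})^N\bigr]b_i,
\end{equation*}
so that the first term inherits extra cancellation from the heat semigroup while the second is summed annulus by annulus using the displayed off-diagonal bound, the $k^{-M}$ tails being absorbed by the doubling dimension. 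This produces a weak-type $(p_0,p_0)$ bound for $F(\SL)$; Marcinkiewicz interpolation against the trivial $L^2$ bound from the spectral theorem gives $L^p$-boundedness for $p_0<p\le 2$, and duality (since $\overline F$ satisfies the same hypothesis) extends this to $2\le p<p_0'$.

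For part (b), invoke Khintchine's inequality with Rademacher functions $\{r_j\}$: for any $p\in(1,\infty)$,
\begin{equation*}
\Bigl\|\Bigl(\sum_{j\in\Z}|\psi(2^j\SL)f|^2\Bigr)^{1/2}\Bigr\|_p \sim \Bigl(\int_0^1\Bigl\|\sum_{j\in\Z}r_j(s)\psi(2^j\SL)f\Bigr\|_p^p\,ds\Bigr)^{1/p}.
\end{equation*}
For each $s\in[0,1]$ the multiplier $\Psi_s(\lambda)=\sum_j r_j(s)\psi(2^j\lambda)$ is bounded on $(0,\infty)$ (the sum converges because $\psi(0)=0$ and $\psi\in\mathscr{S}$) and satisfies the $C^k$ hypothesis of (a) uniformly in $s$ once extended evenly; applying (a) uniformly in $s$ and integrating yields the square-function bound.

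The principal obstacle is the bad-part estimate in the Calder\'on--Zygmund step: without any pointwise smoothness of the heat kernel $p_t(x,y)$, the classical Seeger--Sogge oscillatory argument is unavailable. The decisive device, as already observed in \cite{Heb,DM,ACDH}, is to replace the numerical mean of $b_i$ by the operator-theoretic average $(I-e^{-r_i^m L})^N b_i$; this manufactures just enough cancellation for the generalized Gaussian bounds to propagate through the annular decomposition of Lemma~\ref{le2.2}, and matching $N$ with the doubling dimension $n$ is what pins down the admissible range $p_0<p<p_0'$.
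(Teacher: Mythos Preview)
Your route differs substantially from the paper's. The paper does not redo any Calder\'on--Zygmund analysis here: it simply interpolates ${\rm (DG_m)}$ (an $L^2\to L^2$ off-diagonal Gaussian bound) with ${\rm (G_{p_0,2,m})}$ (an $L^{p_0}\to L^2$ on-diagonal bound) via Riesz--Thorin to obtain the generalized Gaussian estimate ${\rm (GGE_{r,2,m})}$ for every $r\in(p_0,2)$, and then cites Blunck \cite{Blu} for (a). For (b) the paper does not linearize via Rademacher functions; instead it observes that ${\rm (GGE_{r,2,m})}$ implies a bounded $H^\infty$ functional calculus on $L^p$ for $p_0<p<p_0'$ (again \cite{Blu}), and bounded $H^\infty$ calculus is known to imply square-function estimates by \cite{CDMY, Mc}. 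So the paper's proof is essentially two citations after one interpolation step.

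Your plan is in spirit the content behind those citations, and the Khintchine reduction of (b) to a quantitative form of (a) is a perfectly valid alternative to the $H^\infty$-calculus route. But there is a gap in your sketch of (a). The displayed $L^{p_0}\to L^2$ annular bound with decay $k^{-M}$ cannot be obtained by ``composing'' Lemma~\ref{le2.2} with $(E_{p_0,2,m})$ in the way you suggest: Lemma~\ref{le2.2} is a $2\to 2$ off-diagonal estimate, and $(E_{p_0,2,m})$ has no off-diagonal decay at all, so a naive composition $P_B F_j(\SL)P_{A_k}=P_B\widetilde F_j(\SL)\cdot(I+2^{-j}\SL)^{-N}P_{A_k}$ loses the annular localization and the $k^{-M}$ factor disappears. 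To get off-diagonal decay on the $p\to 2$ scale you need ${\rm (GGE_{p,2,m})}$, which the hypotheses give only for $p>p_0$ (by interpolation), not at the endpoint $p_0$; consequently your weak-type $(p_0,p_0)$ claim is not supported. The repair is exactly what the paper does implicitly: fix any $r\in(p_0,2)$, use the interpolated ${\rm (GGE_{r,2,m})}$ to feed Lemma~\ref{le2.1} (rather than Lemma~\ref{le2.2} alone), run the Calder\'on--Zygmund argument at level $r$, and then let $r\downarrow p_0$ to cover the open interval $(p_0,p_0')$. With that adjustment your scheme goes through; without it the off-diagonal step does not.
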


\begin{proof}  It follows from  ${\rm (G_{p_0,2, m})}$ that
\begin{eqnarray}\label{e2.2}
  \| P_{ B(x, t^{1/m})} e^{-tL} P_{B(y,t^{1/m})} \|_{p_0\to 2} \le C V(x,t^{1/m})^{\frac{1}{2} - \frac{1}{p_0}}.
\end{eqnarray}
Let $r\in (p_0, 2)$. By   \eqref{e2.2} and Davies-Gaffney estimates ${\rm (DG_m)}$,
the Riesz-Thorin interpolation theorem gives   the following $L^r-L^2$ off-diagonal estimate
\begin{eqnarray}\label{e2.3}\hspace{1cm}
 \| P_{ B(x, t^{1/m})} e^{-tL} P_{B(y,t^{1/m})}
\|_{r\to 2} \le C V(x,t^{1/m})^{\frac{1}{2} - \frac{1}{r}}   \exp\Big(-c\Big({d(x,y) \over    t^{1/m}}\Big)^{m\over m-1}\Big)
\end{eqnarray}
for all $x,y\in X$ and $t>0$.
Assertion (a) then follows from \cite{Blu}.  The latter  off-diagonal  estimate implies that $L$ has a bounded holomorphic
functional calculus on $L^p$ for $p_0 < p < p_0'$ (see \cite{Blu}).  It is known that the holomorphic functional calculus  implies
 the quadratic estimate of  assertion  (b) (see \cite{CDMY, Mc}).
 \end{proof}

\section{A criterion for $L^p$ boundedness of spectral multipliers}\label{sec3}
\setcounter{equation}{0}

In this section, we shall state and prove  a criterion for $L^p$ boundedness of spectral multipliers. In many cases, this  theorem allows us to
reduce the proof of the $L^p$-boundedness
of general multiplier operator $F(L)$ to obtaining estimates for operators corresponding to dyadically supported  functions. 
Then in the next section, we will show that sharp results for spectral multipliers with dyadic support follows from 
 restriction type conditions.

In what follows,  we fix a non-zero $C^{\infty}$ bump function   on $\mathbb R$ such that
\begin{eqnarray}
{\rm supp}  \phi \subseteq ({1\over 2}, 2) \ \ {\rm and} \ \
\sum_{\ell\in {\Bbb Z}}\phi(2^{-\ell}\lambda)=1 \ \ \ {\rm for\ all}\  \lambda>0
\label{e3.1}
\end{eqnarray}
and set $\phi_{\ell}(\lambda)=\phi(\lambda/2^{\ell})$.

 The aim of this section is to prove the following result.

\begin{theorem}\label{th3.1}
Let  $L$  be  a non-negative self-adjoint operator $L$ on $L^2(X)$ satisfying
Davies-Gaffney estimates ${\rm (DG_m)}$  and condition ${\rm (G_{p_0,2,m})}$ for some
  $1\leq p_0< 2$.
 Let  $F$ be  a bounded Borel function  such that for $p\in (p_0, p'_0),$
\begin{eqnarray}\label{e3.2}
  \sup_{t>0}\|\big(\phi \delta_t F\big)(\sqrt[m]{L}) \|_{p\to p} +
  \sup_{t>0}\|\big(\phi \delta_t F\big)(\sqrt[m]{L}) \|_{2\to 2} \leq A
\end{eqnarray}
holds. Then  for every $M>n/2+1$,  there exists a constant $C>0$ such that
\begin{eqnarray*}
 \|  F(\sqrt[m]{L})  \|_{p\to p}\leq CA \bigg\{ {\rm log}
 \Big(2+ { \sup_{t>0}\|\phi  \delta_t F\|_{  W^{M+n+1}_2} \over A}\Big)
 \bigg\}^{|{1\over p}-{1\over 2}|}.
\end{eqnarray*}

\end{theorem}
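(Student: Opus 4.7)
The proof combines a dyadic spectral decomposition of $F$ with a Calder\'on--Zygmund argument adapted to operators without pointwise kernel smoothness. By self-adjointness of $L$ and duality, it suffices to establish the bound for $p_0 < p < 2$. I would begin with the partition $\sum_\ell \phi_\ell = 1$ from \eqref{e3.1}, writing $F = \sum_\ell F_\ell$ with $F_\ell := F\phi_\ell$. The hypothesis \eqref{e3.2} provides, via the rescaling identity $F_\ell(\sqrt[m]{L}) = (\phi\delta_{2^\ell}F)(\sqrt[m]{L}/2^\ell)$ combined with the scale invariance of Davies--Gaffney and $(G_{p_0,2,m})$, uniform bounds $\|F_\ell(\sqrt[m]{L})\|_{p\to p}\leq CA$ and $\|F_\ell(\sqrt[m]{L})\|_{2\to 2}\leq CA$. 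Simultaneously, Lemma~\ref{le2.2} with $R=2^\ell$ yields the annular $L^2$--$L^2$ off-diagonal decay
\[
\|P_{B_0} F_\ell(\sqrt[m]{L}) P_{A(x_{B_0},r_{B_0},j)}\|_{2\to 2} \leq C j^{-M}(2^\ell r_{B_0})^{-(M+n)}\beta,
\]
for any ball $B_0\subseteq X$, where $\beta := \sup_{t>0}\|\phi\delta_tF\|_{W^{M+n+1}_2}$.

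The central step is to prove a weak-type endpoint for $F(\sqrt[m]{L})$ whose constant is governed by $\beta$. For $f\in L^p$ and $\alpha>0$, I would apply a Calder\'on--Zygmund decomposition to $|f|^p$ at level $\alpha^p$, producing $f = g + \sum_Q b_Q$ with bad functions $b_Q$ supported on disjoint cubes $Q$ of radii $r_Q$. The good part is controlled via the $L^2\to L^2$ estimate $\|F(\sqrt[m]{L})\|_{2\to 2}\leq 2A$ (itself following from $\|F_\ell\|_\infty\leq A$ combined with bounded overlap of the $\phi_\ell$). For each bad piece, following the Hebisch--Duong--McIntosh strategy of \cite{Heb, DM, ACDH}, one splits
\[
F(\sqrt[m]{L}) b_Q = F(\sqrt[m]{L})\bigl(I-(I-e^{-r_Q^m L})^K\bigr)b_Q + F(\sqrt[m]{L})(I-e^{-r_Q^m L})^K b_Q
\]
for a sufficiently large integer $K$. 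The first term is concentrated near $Q$ and handled via the $L^2$ bound; the second is estimated on annular shells $A(x_Q,r_Q,j)$ using the off-diagonal inequality above, with decay $j^{-M}$ and factors $(2^\ell r_Q)^{-(M+n)}$ summable in both $j$ and $\ell$. Summing over $Q$ and $\ell$ yields a weak $(p,p)$ endpoint with constant proportional to $\beta$.

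A delicate balancing between this weak-type bound and the strong $L^2\to L^2$ bound is then used to produce the sharp logarithmic factor $\{\log(2+\beta/A)\}^{|1/p-1/2|}$. Concretely, one splits the dyadic sum at a threshold $N\sim \log(\beta/A)$: the main portion $\sum_{|\ell|\leq N}F_\ell(\sqrt[m]{L})$ is estimated via the triangle inequality (growth $\sim NA$) interpolated with the orthogonality bound in $L^2$ (bound $A$), yielding a factor of $AN^{|1/p-1/2|}$; the tail $\sum_{|\ell|>N}F_\ell(\sqrt[m]{L})$ is absorbed using the Calder\'on--Zygmund machinery above, which decays in $N$ thanks to the factor $(2^\ell r)^{-(M+n)}$ coming from Lemma~\ref{le2.2}. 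The principal technical obstacle is the absence of spatial kernel regularity for $e^{-tL}$, which prohibits classical Calder\'on--Zygmund theory; this is overcome by the Hebisch--Duong--McIntosh device of subtracting $(I-e^{-r_Q^mL})^K b_Q$ in place of the spatial average, trading pointwise kernel smoothness for the $L^2$ off-diagonal decay of Lemma~\ref{le2.2} (which itself rests on Davies--Gaffney via Lemma~\ref{le2.1}).
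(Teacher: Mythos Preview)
Your proposal assembles the correct ingredients --- the dyadic spectral decomposition, the Hebisch--Duong--McIntosh smoothing $(I-e^{-r^mL})^K$, and the annular off-diagonal bound of Lemma~\ref{le2.2} --- but the way you organize them contains a genuine gap that would prevent the argument from closing.

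The central problem is your threshold splitting $\sum_{|\ell|\le N}$ versus $\sum_{|\ell|>N}$. The index $\ell$ labels dyadic \emph{frequency} scales, and there is no privileged origin: $F$ may live at arbitrarily high or low frequencies, so the set $\{|\ell|\le N\}$ has no intrinsic meaning. The factor $(2^\ell r)^{-(M+n)}$ in Lemma~\ref{le2.2} is small only when $2^\ell r$ is large, i.e.\ when the frequency scale $2^\ell$ is large \emph{relative to the spatial scale} $r$ of the ball or cube under consideration. The correct splitting is therefore scale-dependent: for a ball $B$ of radius $r_B$ one must separate those $\ell$ with $|\ell+\log_2 r_B|\le N$ from the rest. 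This relative threshold is exactly what the paper uses (see the decomposition into $\mathscr E_1$ and $\mathscr E_2$), and the logarithmic factor $N^{|1/p-1/2|}$ arises from a H\"older inequality over the $O(N)$ terms in the first set.

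A second issue is your interpolation step. You propose to bound the main portion by $NA$ on $L^p$ (triangle inequality over $N$ terms) and by $A$ on $L^2$ (orthogonality), then interpolate. But Riesz--Thorin between $L^p$ and $L^2$ yields bounds only at exponents strictly between $p$ and $2$, not at $p$ itself; the theorem asks for the $L^p\to L^p$ norm. Similarly, your Calder\'on--Zygmund argument would at best give weak-type $(p,p)$, and you do not explain how to upgrade to strong type at the same exponent.

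The paper circumvents both difficulties by working on the dual side $2<p<p_0'$ and using, instead of a Calder\'on--Zygmund decomposition of $f$, the sharp maximal function machinery of Auscher--Coulhon--Duong--Hofmann (Proposition~\ref{prop3.2}). One introduces the square function $T_Ff=\bigl(\sum_k|(\phi_k^2F)(\sqrt[m]{L})f|^2\bigr)^{1/2}$, checks that \eqref{e3.3} holds with $A_{r_B}=I-(I-e^{-r_B^mL})^K$, and reduces matters to bounding the associated sharp maximal function $\mathcal M^\#_{T_F,L,K}f$. The relative splitting $|k+\log_2 r_B|\lessgtr N$ then appears naturally inside the supremum over balls, and for $p>2$ the H\"older step $\bigl(\sum_{k\in S}a_k^2\bigr)^{1/2}\le |S|^{1/2-1/p}\bigl(\sum a_k^p\bigr)^{1/p}$ produces exactly $N^{1/2-1/p}$. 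This yields a strong $L^p$ bound directly, with no weak-type intermediate step.
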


  \medskip

 The proof of Theorem~\ref{th3.1} is inspired by ideas developed in \cite{ACDH, DM, Heb, S3, SS}.

Let us
  introduce some tools needed in the proof. Let  $T$ be  a sublinear operator which is bounded on $L^{2}(X)$.
Let $\{A_r\}_{r>0}$ be a family of linear  operators acting on  $L^{2}(X)$.    For $f\in L^2(X)$,
we follow  \cite{ACDH} to  define
$$
{\mathcal M}^{\#}_{T, A}f(x)= \sup_{B\ni x} \Big({1\over V(B)}\int_{B} \big|T(I-A_{r_B}) f\big|^2 d\mu\Big)^{1/2},
$$
where the supremum is taken over all balls $B$ in $X$ containing $x$, and $r_B$ is the radius of $B.$

\smallskip

\begin{proposition}\label{prop3.2}
Suppose that   $T$ is  a sublinear operator which is bounded on $L^{2}(X)$ and that $q\in (2, \infty].$
Assume that $\{A_r\}_{r>0}$ is  a family of linear  operators acting on  $L^{2}(X)$ and that
\begin{eqnarray}\label{e3.3}
\Big({1\over V(B)} \int_B|T A_{r_B}f(y)|^{q} d\mu(y)\Big)^{1/q}
\leq C \big({\mathcal M} \big( | Tf |^{2}\big)^{1/2}(x)
\end{eqnarray}

\noindent
for all $f \in L^{2}(X) $, all $x\in X$ and all balls $B\ni x$, $r_B$ being
the radius  of $B$.

Then for $0<p<q$, there exists $C_p$ such that
\begin{eqnarray}\label{e3.4}
 \big\| \big({\mathcal M}\big( | Tf|^2 \big)\big)^{1/2}\|_p\leq C_p \big(  \|{\mathcal M}^{\#}_{T, A}f \big)\|_p +\|f\|_p\big)
\end{eqnarray}
 for every $f\in L^2(X)$ for which the left-hand side is finite (if $\mu(X)=\infty$, the term $C_p\|f\|_p$
 can be omitted in the right-hand side of \eqref{e3.4}.
\end{proposition}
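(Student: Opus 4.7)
The approach is the good-$\lambda$ method of Fefferman--Stein, carried out in the setting of spaces of homogeneous type as in \cite{ACDH}. The plan is to compare the level sets of $(\mathcal{M}(|Tf|^2))^{1/2}$ with those of $\mathcal{M}^{\#}_{T,A}f$ via a Whitney decomposition, and then integrate.

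The starting point is a pointwise local splitting. Fix $x\in X$ and a ball $B\ni x$. Sublinearity of $T$ and the triangle inequality give
\begin{equation*}
\left(\frac{1}{V(B)}\int_B |Tf|^2\,d\mu\right)^{1/2} \le \left(\frac{1}{V(B)}\int_B |T(I-A_{r_B})f|^2\,d\mu\right)^{1/2} + \left(\frac{1}{V(B)}\int_B |TA_{r_B}f|^2\,d\mu\right)^{1/2}.
\end{equation*}
The first term on the right is at most $\mathcal{M}^{\#}_{T,A}f(x)$ by definition. For the second, H\"older's inequality (legitimate since $q>2$) together with \eqref{e3.3} bounds it by $C\,(\mathcal{M}(|Tf|^2))^{1/2}(y)$ for any $y\in B$.

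I would then leverage this splitting to establish a good-$\lambda$ inequality
\begin{equation*}
\mu\left\{(\mathcal{M}(|Tf|^2))^{1/2} > K\lambda,\ \mathcal{M}^{\#}_{T,A}f \le \gamma\lambda\right\} \le \epsilon(K,\gamma)\,\mu\left\{(\mathcal{M}(|Tf|^2))^{1/2} > \lambda\right\}
\end{equation*}
with $\epsilon(K,\gamma)$ small for suitable $K,\gamma$. Decompose the open level set $E_\lambda = \{(\mathcal{M}(|Tf|^2))^{1/2}>\lambda\}$ into a Whitney family $\{B_i\}$ of balls with bounded overlap and radii comparable to $\mathrm{dist}(B_i, E_\lambda^c)$. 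On each $B_i$ meeting $\{\mathcal{M}^{\#}_{T,A}f\le\gamma\lambda\}$, pick a point $y_i$ in that intersection; the Whitney property supplies a nearby point $\bar x_i$ with $(\mathcal{M}(|Tf|^2))^{1/2}(\bar x_i)\le\lambda$. Applying the splitting to a slight dilate $\widetilde B_i$ containing both points, with the first summand bounded at $y_i$ by $\gamma\lambda$ and the second bounded at $\bar x_i$ by $C\lambda$ via \eqref{e3.3}, controls both the $L^2$-average of $|T(I-A)f|^2$ and the $L^q$-average of $|TA f|$ over $\widetilde B_i$. Weak $(1,1)$ of $\mathcal{M}$ applied to the first piece and $L^{q/2}$-boundedness of $\mathcal{M}$ applied to the second, combined with the usual off-$\widetilde B_i$ tail bound by $\lambda^2$, yield the required measure estimate $\le \epsilon(K,\gamma)\,\mu(B_i)$ on each Whitney ball, and summing over $i$ produces the good-$\lambda$ inequality.

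The conclusion follows by multiplying the good-$\lambda$ inequality by $p\lambda^{p-1}$ and integrating on $(0,\infty)$, which yields
\begin{equation*}
\|(\mathcal{M}(|Tf|^2))^{1/2}\|_p^p \le K^p\,\epsilon(K,\gamma)\,\|(\mathcal{M}(|Tf|^2))^{1/2}\|_p^p + C_{K,\gamma}\|\mathcal{M}^{\#}_{T,A}f\|_p^p.
\end{equation*}
For appropriate $K$ large and $\gamma$ small (with $\gamma$ tied to $K$ when $p\ge 2$), the assumption $p<q$ makes the first coefficient less than $1/2$, and the a priori finiteness of the left-hand side permits absorption. The additive $\|f\|_p$ term handles the case $\mu(X)<\infty$, in which for small $\lambda$ the set $E_\lambda$ equals $X$ and no proper Whitney decomposition is available; when $\mu(X)=\infty$ it can be dropped. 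The main obstacle is the Whitney step itself: in a general doubling metric space one must construct a covering with the right overlap and adjacency, and carefully track geometric constants so that the competing contributions from the $L^2$-controlled piece $T(I-A)f$ and the $L^q$-controlled piece $TAf$ combine into an $\epsilon(K,\gamma)$ with decay in $K$ fast enough to cover the full range $0<p<q$.
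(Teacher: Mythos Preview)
Your proposal is correct and follows essentially the same approach as the paper: the paper's proof consists solely of a reference to \cite[Lemma 2.3]{ACDH}, and your sketch reproduces the Fefferman--Stein good-$\lambda$ argument via a Whitney decomposition that is precisely the method used there. The only remark is that the paper itself provides no details beyond the citation, so your write-up is in fact more informative than what appears in the paper.
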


\begin{proof} For the proof of Proposition~\ref{prop3.2}, we refer readers  to  \cite[Lemma 2.3]{ACDH}.
\end{proof}

 \bigskip

 \noindent
{\em Proof of}   { Theorem~\ref{th3.1}.}
 Given   a bounded Borel function  $F$, we consider an operator $T_F$, given by
$$
T_Ff(x)= \Big\{\sum_{k\in{\mathbb Z}} |(\phi^2_kF)(\sqrt[m]{L})f(x)|^2\Big\}^{1/2}.
$$
For this operator $T_F$, condition \eqref{e3.3} always holds for every  $2<q<p'_0$
and $A_{r_B}=I-(I-e^{-r_B^mL})^K$ for every  $K\in{\mathbb N}$.
Indeed, in virtue  of
the formula
\begin{eqnarray*}
 I-(I-e^{-r_B^mL})^K =\sum_{s=1}^K
 \left(
 \begin{array}{lcr}
K\\
 s
 \end{array}
 \right)(-1)^{s+1} e^{-sr_B^mL}
\end{eqnarray*}
and  the commutativity property
$(\phi^2_kF)(\sqrt[m]{L})e^{-sr_B^mL} =e^{-sr_B^mL}(\phi^2_kF)(\sqrt[m]{L})$,  it is enough to show that
for all $B\ni x,$
 \begin{eqnarray} \label{e3.5} \hspace{1cm}
\Big( {1\over V(B)}\int_B \big(\sum_{k\in{\mathbb Z}}
\big| e^{-sr_B^mL} (\phi^2_kF)(\sqrt[m]{L}) f(y)\big|^2\big)^{q/2} d\mu(y)\Big)^{1/q}
\leq C \big({\mathcal M} \big(  |T_Ff |^{2}\big)^{1/2}(x).
\end{eqnarray}
To prove \eqref{e3.5}, we  observe  that hypothesis  ${\rm (DG_m)}$  and  ${\rm (G_{p_0,2,m})}$
  imply ${\rm (GGE_{q',2, m})}$. By duality, ${\rm (GGE_{2,q, m})}$ holds.
By
  Minkowski's inequality, (ii) of Lemma~\ref{le2.1},
  conditions \eqref{e1.2} and \eqref{e1.3} for every
  $s=1,2, \ldots, K$ and every $B\ni x,$
  the left hand side of    \eqref{e3.5} is less than
\begin{eqnarray*}
&&\hspace{-1cm}V(B)^{-1/q}  \sum_{j=0}^{\infty}   \Big\{ \sum_{k\in{\mathbb Z}} \big(
\|P_{B}e^{-sr_B^mL} P_{A(x_B, r_B, j)}(\phi^2_kF)(\sqrt[m]{L})f \|_{q} \big)^{2}\Big\}^{1/2}\nonumber \\
&\leq&V(B)^{-1/q} \sum_{j=0}^{\infty}   \|P_{B}e^{-sr_B^mL} P_{A(x_B, r_B, j)}\|_{2\to q}
\Big\{ \sum_{k\in{\mathbb Z}}
 \|(\phi^2_kF)(\sqrt[m]{L})f \|_{L^2(A(x_B, r_B, j))}^{2}\Big\}^{1/2}\nonumber \\
&\leq&  C \sum_{j=0}^{\infty} \Big({V((j+1)B) \over  V(B)}\Big)^{1/2} e^{-c_s j^{m/(m-1)}}
\Big\{  {1\over V((j+1)B)} \int_{(j+1)B}
\sum_{k\in{\mathbb Z}} \big| (\phi^2_kF)(\sqrt[m]{L})f(y)\big|^2 d\mu(y) \Big\}^{1/2}\nonumber \\
&\leq& C  \sum_{j=0}^{\infty} e^{-c_sj^{m/(m-1)}} (1+j)^{n/2}\big({\mathcal M} \big( | T_Ff |^{2}\big)^{1/2}(x) \nonumber \\
 &\leq& C  \big({\mathcal M} \big( | T_Ff |^{2}\big)^{1/2}(x). \nonumber
\end{eqnarray*}
The above estimates yield   \eqref{e3.5}.

Define,  for every $K\in{\mathbb N}$ and every $f\in L^2(X)$,
\begin{eqnarray}\label{e3.6}
{\mathcal M}^{\#}_{T_F, L, K}f(x)&=& \sup_{B\ni x} \Big({1\over V(B)}\int_{B} \big|T_F(I-e^{-r_B^mL})^K f(y)\big|^2 d\mu(y)\Big)^{1/2} ,
\end{eqnarray}
where the supremum is taken over all balls $B$ in $X$ containing $x$, and $r_B$ is the radius of $B.$ Note that
by duality it suffices  to
 prove Theorem~\ref{th3.1} for  $2< p<p'_0$. We shall  show  that if $K$ is large enough, then
\begin{eqnarray}\label{e3.7}
\big\|
{\mathcal M}^{\#}_{T_F, L, K}f \big\|_p\leq CA N^{{1\over 2}-{1\over p}}\|f\|_p,
\end{eqnarray}
where $A$ is given in \eqref{e3.2}, and
\begin{eqnarray}\label{e3.8}
N=     {\rm log}
 \bigg(2+ {\sup_{t>0}\|\phi  \delta_t F\|_{  W^{M+n+1}_2}  \over A}\bigg).
\end{eqnarray}
Once we show estimates \eqref{e3.7} and \eqref{e3.8}, it follows
 from  (b) of Lemma~\ref{le2.4} and Proposition~\ref{prop3.2} (with some $p<q<p_0'$) that
\begin{eqnarray*}
\|F(\sqrt[m]{L})f\|_p &\leq&
C \| T_Ff \|_p\\
&\leq& C \| \big({\mathcal M}\big( | T_Ff|^2 \big)\big)^{1/2} \|_p\\
&\leq& C_p \big(  \|{\mathcal M}^{\#}_{T_F, L, K}f  \|_p +\|f\|_p\big)\\
&\leq& CA N^{ {1\over 2}-{1\over p} }\|f\|_p,
\end{eqnarray*}
and this concludes the proof of Theorem~\ref{th3.1}.

Therefore it suffices to prove \eqref{e3.7}.  By Minkowski's
inequality
$$
{\mathcal M}^{\#}_{T_F, L, K}f(x)\leq {\mathscr E}_1(f)(x)+{\mathscr E}_2(f)(x),
$$
where
$$
 {\mathscr E}_1(f)(x)=
\sup_{B\ni x} \Big({1\over V(B)}\int_B  \sum_{|k+{\rm log}_2 r_B|\leq N}
\big|\big(I-e^{-r_B^mL}\big)^K(\phi^2_kF)(\sqrt[m]{L})f(y) \big|^2d\mu(y)\Big)^{1/2}
$$
and
$$ {\mathscr E}_2(f)(x)=
\sup_{B\ni x} \Big({1\over V(B)}\int_B  \sum_{|k+{\rm log}_2 r_B|> N}
\big|\big(I-e^{-r_B^mL}\big)^K(\phi^2_kF)(\sqrt[m]{L})f(y) \big|^2d\mu(y)\Big)^{1/2}.
$$
To prove estimate \eqref{e3.7} it is enough to show that
\begin{eqnarray}\label{e3.9}
\|{\mathscr E}_1(f)\|_p\leq C N^{ {1\over 2}-{1\over p} } \big\|
 \big(\sum_{  k\in{\mathbb Z}}\big|(\phi^2_kF)(\sqrt[m]{L})f
 \big|^p\big)^{1/p}\big\|_p
\end{eqnarray}
and
\begin{eqnarray}\label{e3.10}
\|{\mathscr E}_2(f)\|_p\leq CA \big\|
 \big(\sum_{ k\in{\mathbb Z}}\big|{\phi}_k (\sqrt[m]{L})f
 \big|^2\big)^{1/2}\big\|_p.
\end{eqnarray}
  Indeed, by \eqref{e3.9} and  \eqref{e3.2},
\begin{eqnarray*}
  \big\|
 \big(\sum_{  k\in{\mathbb Z}}\big|(\phi^2_kF)(\sqrt[m]{L})f
 \big|^p\big)^{1/p}\big\|_p^p&\leq& \sum_k \big\|
  (\phi^2_kF)(\sqrt[m]{L}) f\big\|_p^p  \nonumber\\
 &\leq& \sup_{k\in{\mathbb Z}}\big\|
  (\phi_kF)(\sqrt[m]{L}) \big\|_{p\to p}^p  \sum_k \big\|{\phi}_k (\sqrt[m]{L})f
 \big\|_p^p\nonumber\\
  &\leq& A^p   \big\|\big(\sum_{k\in{\mathbb Z}} |{\phi}_k (\sqrt[m]{L})f |^2\big)^{1/2}
 \big\|_p^p\nonumber\\
 &\leq& CA^p    \|f \|_p^p.
\end{eqnarray*}
Noq by \eqref{e3.10} and Lemma~\ref{le2.4},
$$
\|{\mathscr E}_2(f)\|_p\leq CA  \|
f\|_p.
$$
These estimates imply \eqref{e3.7}.

It remains to prove claims \eqref{e3.9} and \eqref{e3.10}.

\smallskip

\noindent
{{\em Proof of \eqref{e3.9}}}
Similarly as in  the proof of  \eqref{e3.5},    ${\rm (DG_m)}$ and (ii) of Lemma~\ref{le2.1}
   yields
\begin{eqnarray*}
 &&\hspace{-1cm}
 {\mathscr E}_1(f)(x) \\
&\leq&  C_K \sum_{s=1}^K\sum_{j=0}^{\infty}(1+j)^{n\over 2} e^{-c_s j^{m\over m-1}}
\Big\{ \sup_{B\ni x} {1\over V((j+1)B)} \int_{(j+1)B}
\sum_{|k+{\rm log}_2 r_B|\leq N} \big| (\phi^2_kF)(\sqrt[m]{L})f \big|^2 d\mu  \Big\}^{1/2}\nonumber \\
&\leq&  C_K N^{ {1\over 2}-{1\over p} }\sum_{s=1}^K \sum_{j=0}^{\infty}(1+j)^{n\over2}e^{-c_s j^{m\over m-1}}
\sup_{B\ni x}\Big\{  {1\over V((j+1)B)} \int_{(j+1)B}\Big(
\sum_{k\in{\mathbb Z}} \big| (\phi^2_kF)(\sqrt[m]{L})f \big|^p\Big)^{2/p} d\mu  \Big\}^{1/2}\nonumber \\
&\leq&  C_KN^{ {1\over 2}-{1\over p} } \Big({\mathcal M}\Big(  \Big\{ \sum_{k\in{\mathbb Z}}\big|
(\phi^2_kF)(\sqrt[m]{L})f \big|^p\Big\}^{2/p}\Big) \Big)^{1/2}(x).
\end{eqnarray*}
Then  by $L^{p/2}$-boundedness of ${\mathcal M}$
\begin{eqnarray*}
\|{\mathscr E}_1(f)\|_p
&\leq& C N^{ {1\over 2}-{1\over p} } \big\|
 \big(\sum_{  k\in{\mathbb Z}}\big|(\phi^2_kF)(\sqrt[m]{L})f
 \big|^p\big)^{1/p}\big\|_p.
\end{eqnarray*}
This shows  \eqref{e3.9}.

\medskip

\noindent
{{\em Proof of \eqref{e3.10}}}.
We need a further decomposition  of ${\mathscr E}_2(f).$ Note that
$${\mathscr E}_2(f)(x)\leq {\mathscr E}_{21}(f)(x)+{\mathscr E}_{22}(f)(x),
$$ where
$${\mathscr E}_{21}(f)(x)=\sup_{B\ni x}\Big( {1\over V(B)}\int_B  \sum_{k\in{\mathbb Z}}
\big| (1-e^{-r_B^mL})^K({ \phi}_kF)(\sqrt[m]{L})\big(P_{2B}{\phi}_k (\sqrt[m]{L})f\big)(y) \big|^2d\mu(y)\Big)^{1/2}
$$
and
$${\mathscr E}_{22}(f)(x)=\sup_{B\ni x}\Big( {1\over V(B)}\int_B  \sum_{|k+{\rm log}_2 r_B|> N}
\big|\big(I-e^{-r_B^mL}\big)^K({ \phi}_kF)(\sqrt[m]{L}) \big(P_{X\backslash 2B}{\phi}_k (\sqrt[m]{L})f\big)(y) \big|^2 d\mu(y)\Big)^{1/2}.
$$
We first estimate the term ${\mathscr E}_{21}(f)$. By  \eqref{e3.2}
\begin{eqnarray*}
&&\hspace{-1 cm}{\mathscr E}_{21}(f)(x)\\
&\leq&
\sup_{B\ni x}\Big({1\over V(B)}\sum_{k\in{\mathbb Z}} \big\|(1-e^{-r_B^mL})^K
({  \phi}_kF)(\sqrt[m]{L})\big(P_{2B}{\phi}_k (\sqrt[m]{L})f\big)\big\|_2^2
 \Big)^{1/2}\nonumber\\
  &\leq& C\sup_{r_B>0} \sup_{k\in{\mathbb Z}}\big\|
  (1-e^{-r_B^mL})^K({ \phi}_kF)(\sqrt[m]{L}) \big\|_{2\to 2}
  \sup_{B\ni x}\Big({1\over V(2B)}\int_{2B} \sum_{k\in{\mathbb Z}}
 | {\phi}_k (\sqrt[m]{L})f(y) \big|^2 d\mu(y)\Big)^{1/2}\nonumber\\
 &\leq& C A\big( {\mathcal M}\big( \sum_{k\in{\mathbb Z}} |\phi_k(\sqrt[m]{L})f|^2\big)\big)^{1/2}(x).
\end{eqnarray*}
By $L^{p/2}$-boundedness of ${\mathcal M}$
\begin{eqnarray*}
 \|{\mathscr E}_{21}(f)\|_p &\leq& CA\big\|\big( {\mathcal M}( \sum_{k\in{\mathbb Z}} |\phi_k(\sqrt[m]{L})f|^2\big)\big)^{1/2}\big\|_p\\
  &\leq& CA \big\|
 \big(\sum_{  k\in{\mathbb Z}}\big|{\phi}_k (\sqrt[m]{L})f
 \big|^2\big)^{1/2}\big\|_p.
\end{eqnarray*}
Next, we consider  the term ${\mathscr E}_{22}(f)$.
Observe that
 \begin{eqnarray*}
 {\mathscr E}_{22}(f)(x) \le
\sup_{B\ni x}     \sum_{j=2}^{\infty} \sum_{|k+  \log_2 r_B|>N}  V(B)^{-1/2}
\big\|P_{B}  (1- e^{-r^m_BL})^K  (\phi_kF)(\sqrt[m]{L})\big( P_{A(x_B, r_B, j)}
 {\phi}_k(\sqrt[m]{L})f\big)\big\|_2.
\end{eqnarray*}
By conditions \eqref{e1.2} and \eqref{e1.3},
 \begin{eqnarray*}
 &&\hspace{-2cm}\big\|P_{B}   (1- e^{-r^m_BL})^K  (\phi_kF)(\sqrt[m]{L}) \big(P_{A(x_B, r_B, j)}
 {\phi}_k(\sqrt[m]{L})f\big)\big\|_2\\[1pt]
  &\leq&  Cj^{n/2} V(B)^{1/2}
\big\|P_{B}  (1- e^{-r^m_BL})^K  (\phi_kF)(\sqrt[m]{L}) P_{A(x_B, r_B, j)} \big\|_{2\to 2} \times \\
 &&\hspace{3cm} \times \Big({1\over V((j+1)B)} \int_{(j+1)B}|{\phi}_k(\sqrt[m]{L})f(y)|^2 d\mu(y)\Big)^{1/2}.
\end{eqnarray*}
 To continue, we note that the function
$ (1- e^{-r^m_B\lambda^m})^K \phi_k(\lambda)F(\lambda)$ is supported in $  [2^{k-1}, \ 2^{k+1}]$. Now,
if $k$ is an integer greater than $M+n+1$, then for Sobolev space
${  W^{M+n+1}_2}({\mathbb R})$
\begin{eqnarray*}
\|\delta_{2^{k+1}}\big((1- e^{-r^m_B\lambda^m})^K \phi_k(\lambda)F(\lambda)\big) \|_{  W^{M+n+1}_2} \\
 \leq  \| (1-e^{-(2^{k+1}r_B\lambda)^m})^K\phi(\lambda)\delta_{2^{k+1}}F(\lambda) \|_{  W^{M+n+1}_2}\\
 \leq  \| (1-e^{-(2^{k+1}r_B\lambda)^m})^K\|_{C^k([{1\over 2}, 1])} \|\phi\delta_{2^{k+1}}F \|_{  W^{M+n+1}_2}\\
 \leq  C \min\big\{ 1, \ \big(2^k r_B\big)^{mK} \big\}  \sup_{t>0}\|\phi  \delta_t F\|_{  W^{M+n+1}_2}.
\end{eqnarray*}
By  Lemma~\ref{le2.2}  for every $M>0$
\begin{eqnarray*}
 \big\|P_{B}   (1- e^{-r^m_BL})^K  (\phi_kF)(\sqrt[m]{L}) P_{A(x_B, r_B, j)}\big\|_{2\to 2}\\
 \leq  C j^{-M}\big(2^k r_B\big)^{-M-n} \|\delta_{2^{k+1}}\big((1- e^{-r^m_B\lambda^m})^K
 \phi_k(\lambda)F(\lambda)\big) \|_{  W^{M+n+1}_2}\\
  \leq  Cj^{-M}\min\big\{ \big(2^k r_B\big)^{-M-n}, \ \big(2^k r_B\big)^{mK-M-n} \big\}  \sup_{t>0}\|\phi  \delta_t F\|_{  W^{M+n+1}_2}.
\end{eqnarray*}
 Therefore, we sum  a geometrical series to  obtain that if $M>n/2+1$ and  $mK-M-n>1$ in \eqref{e3.6}, then
 \begin{eqnarray*}
 {\mathscr E}_{22}(f)(x)
   \leq   C  \sup_{t>0}\|\phi  \delta_t F\|_{  W^{M+n+1}_2}
    \sup_{B\ni x} \sum_{j=2}^{\infty} j^{n/2-M} \\
	\times
	 \sum_{|k+ \log_2 r_B |>N}  \min\big\{ \big(2^k r_B\big)^{-M-n}, \ \big(2^k r_B\big)^{ mK-M-n}\big\}
	\Big({1\over V((j+1)B)} \int_{(j+1)B}|{\phi}_k(\sqrt[m]{L})f |^2 d\mu
	   \Big)^{1/2}\\
    \leq   C 2^{-N} \sup_{t>0}\|\phi  \delta_t F\|_{  W^{M+n+1}_2}
   \sum_{j=2}^{\infty} j^{n/2-M}
	\sup_{B\ni x}\Big({1\over V((j+1)B)} \int_{(j+1)B}\sum_{k\in{\mathbb Z}}|{\phi}_k(\sqrt[m]{L})f |^2 d\mu \Big)^{1/2} \nonumber\\
   \leq   CA \Big({\mathcal M}\Big(\sum_{k\in{\mathbb Z}}|\phi_k(\sqrt[m]{L})f|^2\Big)\Big)^{1/2} (x),
\end{eqnarray*}
where   we  use the fact that by condition \eqref{e3.8},
  $2^{-N} \sup_{t>0}\|\phi  \delta_t F\|_{  W^{M+n+1}_2}\leq CA$.

Again, by $L^{p/2}$-boundedness of ${\mathcal M}$
 \begin{eqnarray*}
 \|{\mathscr E}_{22}(f)\|_p
     &\leq &     CA \big\| \big(\sum_{k\in{\mathbb Z}}|\phi_k(\sqrt[m]{L})f|^2\big)^{1/2} \big\|_p.
\end{eqnarray*}
This shows \eqref{e3.10} and  completes the proof of Theorem~\ref{th3.1}.  \hfill{} $\Box$

\bigskip

\medskip

By a classical dyadic decomposition of $F$, we can write
$F(\sqrt[m]{L})$ as the sum $\sum F_j(\sqrt[m]{L})$. Then we  apply Theorem  \ref{th3.1}   to estimate  $\| F_j(\sqrt[m]{L})\|_{r \to r}$.
However, as mentioned in the introduction, this does not automatically imply that the
operator $F(\sqrt[m]{L})$ acts boundedly on $L^r$. See \cite{Carb1, S3,SS} where this problem is discussed in the Euclidean case.

We shall now discuss a a criterion which guarantee boundedness of $F({L})$
under assumption that  multipliers supported in dyadic
intervals are uniformly bounded.
In Section \ref{sec4} we describe results concerning multipliers supported in dyadic
intervals.

\medskip

 \begin{theorem}\label {th3.3}
Let  $L$  be  a non-negative self-adjoint operator $L$ on $L^2(X)$ satisfying
Davies-Gaffney estimates ${\rm (DG_m)}$  and condition ${\rm (G_{p_0,2,m})}$ for some
  $1\leq p_0< 2$.
Assume that for any bounded Borel function $H$  such that supp $H\subseteq [{1/4}, 4]$, the following
condition holds:
\begin{eqnarray}\label{e3.11}
\sup_{t>0}\|H(t\sqrt[m]{L}) \|_{p\to p}   \leq  C\|  H\|_{W^{\alpha}_q}
\end{eqnarray}
for some $p\in (p_0, 2)$, $\alpha>n(1/p-1/2)$, and $1\leq q\leq \infty$.
  Then for any bounded Borel function $F$  such that
\begin{eqnarray}\label{e3.12}
\sup_{t>0}\|\phi\delta_tF\|_{W^{\alpha}_q} < \infty
\end{eqnarray}
for some $\alpha>\max\{n(1/p-1/2),1/q\}$,  the operator
$F( {L})$ is bounded on $L^r(X)$ for all $p<r<p'$. In addition,
\begin{eqnarray}\label{e3.13}
 \|F( {L}) \|_{r\to r} \leq  C\sup_{t>0}\|\phi\delta_tF\|_{W^{\alpha}_q}.
\end{eqnarray}
\end{theorem}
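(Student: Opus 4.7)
The plan is to imitate the proof of Theorem~\ref{th3.1}, but to replace the $W^{M+n+1}_2$ Sobolev estimate used in bounding the off-diagonal term ${\mathscr E}_{22}$ with the sharper dyadic hypothesis \eqref{e3.11}; this eliminates the logarithmic correction of Theorem~\ref{th3.1} and yields the clean bound \eqref{e3.13}. By self-adjointness of $L$ and duality (since $\bar F$ satisfies the same assumptions as $F$), it suffices to treat $r\in(p,2]$.

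The key preliminary observation is that hypothesis \eqref{e3.11}, combined with Sobolev embedding, already supplies uniform $L^r\to L^r$ boundedness of the dyadic multipliers. Applying \eqref{e3.11} to $H=\phi^2\delta_tF$ (supported in $[1/2,2]\subset[1/4,4]$) gives
\[
\sup_{t>0}\|(\phi^2\delta_tF)(\sqrt[m]{L})\|_{p\to p}\le C\sup_{t>0}\|\phi\delta_tF\|_{W^\alpha_q},
\]
finite by \eqref{e3.12}. The assumption $\alpha>1/q$ furnishes the Sobolev embedding $W^\alpha_q(\R)\hookrightarrow L^\infty(\R)$, so the spectral theorem yields the analogous uniform $L^2\to L^2$ bound. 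Riesz--Thorin interpolation then gives the uniform $L^r\to L^r$ estimate on dyadic pieces $(\phi^2_k F)(\sqrt[m]{L})$ for $r\in(p,2]$. This dyadic uniform $L^r$ control is precisely the input that is absent from the hypotheses of Theorem~\ref{th3.1} and that drives the removal of the log factor.

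Following the scheme of Theorem~\ref{th3.1}, I would introduce the square-function operator $T_F$ and the sharp maximal function ${\mathcal M}^{\#}_{T_F, L, K}$, and split ${\mathcal M}^{\#}_{T_F, L, K}\le{\mathscr E}_1+{\mathscr E}_{21}+{\mathscr E}_{22}$. For ${\mathscr E}_1$, the uniform $L^r\to L^r$ bound from the previous paragraph removes the need to partition the sum at a threshold $N$; the $N^{|1/r-1/2|}$ factor that produced the logarithm therefore does not arise. The term ${\mathscr E}_{21}$ is handled as in Theorem~\ref{th3.1}, using the uniform $L^2\to L^2$ bound on dyadic pieces together with the boundedness of the Hardy--Littlewood maximal operator. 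The principal obstacle --- and the heart of the proof --- is estimating ${\mathscr E}_{22}$ without invoking $W^{M+n+1}_2$ regularity. My plan is to replace Lemma~\ref{le2.2} by a finer off-diagonal estimate whose right-hand side involves $\|\phi\delta_{2^k}F\|_{W^\alpha_q}$, obtained by combining the $L^p\to L^2$ generalized Gaussian bound (which follows from ${\rm (G_{p_0,2,m})}$ and ${\rm (DG_m)}$) with the dyadic $L^p\to L^p$ boundedness from \eqref{e3.11}. The decay produced is summable across dyadic scales without a truncation parameter, yielding \eqref{e3.13} with no logarithmic factor and completing the proof.
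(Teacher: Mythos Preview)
Your proposal takes a different route from the paper and, as stated, has a genuine gap.

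\textbf{What the paper does.} The paper does \emph{not} reopen the proof of Theorem~\ref{th3.1}. Instead it introduces a second decomposition on the multiplier side: with $\theta_\ell=\psi_\ell-\psi_{\ell-1}$ a Littlewood--Paley family on $\R$, it writes $F=\sum_{\ell\ge 0}F_\ell$ where $F_\ell=\sum_j[\theta_\ell*(\phi\,\delta_{2^{-j}}F)](2^{-j}\cdot)$. Each $F_\ell$ is smooth, so $\sup_t\|\phi\,\delta_tF_\ell\|_{W^{M+n+1}_2}\le C2^{c\ell}\sup_t\|\phi\,\delta_tF\|_{W^\alpha_q}$. On the other hand, from $\alpha>1/q$ one has $\|\theta_\ell*(\phi\,\delta_tF)\|_\infty\le C2^{-\ell\epsilon}$, hence an $L^2\to L^2$ gain $2^{-\ell\epsilon}$ on dyadic pieces; interpolating against the $L^p$ bound from hypothesis~\eqref{e3.11} gives $\sup_t\|(\phi\,\delta_tF_\ell)(\sqrt[m]{L})\|_{r\to r}\le C2^{-\eta_r\ell}$. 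Now Theorem~\ref{th3.1} is applied as a \emph{black box} to each $F_\ell$: the logarithm there becomes $(1+\ell)^{|1/r-1/2|}$, which is absorbed by $2^{-\eta_r\ell}$, and the sum in $\ell$ converges. The hypothesis~\eqref{e3.11} enters only to provide the uniform $L^p$ endpoint for interpolation; it is never used to manufacture off-diagonal decay.

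\textbf{The gap in your plan.} The crucial step you flag --- replacing Lemma~\ref{le2.2} by an off-diagonal estimate with $\|\phi\,\delta_{2^k}F\|_{W^\alpha_q}$ on the right --- is not justified, and I do not see how it can be. The decay $\|P_B(1-e^{-r_B^mL})^K(\phi_kF)(\sqrt[m]{L})P_{A(x_B,r_B,j)}\|_{2\to 2}\lesssim j^{-M}(2^kr_B)^{-(M+n)}$ in the paper comes from Lemma~\ref{le2.2}, whose proof writes the multiplier via Fourier inversion and uses $(M+n+1)$ derivatives to gain $(1+\tau^2)^{-(M+n+1)/2}$ decay of $\widehat{G}$. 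With only $W^\alpha_q$ control on $\phi_kF$ (and $\alpha$ possibly just above $1/2$), that mechanism is unavailable. Your proposed substitute --- combining the generalized Gaussian $L^p\to L^2$ bound with the dyadic $L^p\to L^p$ bound~\eqref{e3.11} --- does not produce spatial localization of $(\phi_kF)(\sqrt[m]{L})$: the hypothesis~\eqref{e3.11} is a global operator norm and carries no off-diagonal information. If you insert a smooth cutoff $\tilde\phi_k$ and try to put the off-diagonal decay on $(1-e^{-r_B^mL})^K\tilde\phi_k(\sqrt[m]{L})$ alone, the remaining factor $(\phi_kF)(\sqrt[m]{L})$ is merely $L^2$-bounded, and after summing over the intermediate annulus you recover no decay in $j$; the sum $\sum_{j\ge 2}$ then diverges. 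Relatedly, your claim for ${\mathscr E}_1$ is circular: without the threshold $N$, bounding the full $\ell^2$ sum inside the maximal function leads straight back to $\|T_Ff\|_r$, which is the quantity you are trying to control.

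The paper's extra frequency decomposition $F=\sum_\ell F_\ell$ is precisely the device that bypasses this obstruction: it trades the unavailable $W^{M+n+1}_2$ regularity of $F$ for a summable loss in $\ell$.
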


\begin{proof}
Observe that  $ \| F\|_{W^q_\alpha} \sim  \| G\|_{W^q_\alpha}$
where $G(\lambda)=F(\sqrt[m]{\lambda})$.
 For
this reason, we can replace $F(L)$ by  $F(\sqrt[m]{L})$ in the proof.
 Let   $\psi$ be a $C^{\infty}$-function,
supported in $\{|\xi|\leq 1/8\}$, $\int \psi(\xi)d\xi=1.$
Further set $\psi_{\ell}=2^{\ell  }\psi(2^{\ell}\cdot)$, $\theta_{\ell}=  \psi_{\ell} -\psi_{\ell-1}\
(\ell\geq 1), \theta_0=\psi_0. $
We write

\begin{eqnarray}\label{e3.14}
F &=&\sum_{j\in {\mathbb Z}} \phi(2^{-j}\cdot) F\nonumber\\
&=& \sum_{j\in {\mathbb Z}}\sum_{\ell\geq 0} \big[\theta_\ell\ast \big(\phi F(2^j \cdot)\big)\big] (2^{-j}\cdot)\nonumber\\
&=&   \sum_{\ell\geq 0} F_\ell
\end{eqnarray}
and so for every $p<r< p'$
$$
\|F(\sqrt[m]{L})\|_{r\to r}\leq \sum_{\ell\geq 0}\|F_{\ell}(\sqrt[m]{L})\|_{r\to r}.
$$

 To estimate   terms  $\|F_{\ell}(\sqrt[m]{L})\|_{r\to r}, \ell\geq 0$ we shall apply Theorem~\ref{th3.1} .  Firstly,
we claim that for $p<r< p'$,
there exists some $\eta_r>0$ such that
\begin{eqnarray}\label{e3.15}
\sup_{t>0}\|\big(\phi \delta_tF_{\ell}\big)(\sqrt[m]{L})\|_{r\to r}\leq C_r2^{- \eta_r\ell } \sup_{t>0}\|\phi\delta_t F\|_{W^{\alpha}_q}.
\end{eqnarray}
By duality we may assume that $p<r\leq 2$.  Observe  that
  $ \theta_\ell\ast \big(\phi F(2^j\cdot)\big)$ is supported in $\{\xi: \ {1\over 4}\leq |\xi|\leq 4 \}.$
  If $\ell\geq 1$, we have that for $t\in [2^k, 2^{k+1}]$,
  \begin{eqnarray}\label{e3.16}
 \big(\phi \delta_tF_{\ell}\big)(\sqrt[m]{L})
= \sum_{j=k-4}^{k+4}  \phi(\sqrt[m]{L})
\big[\theta_\ell\ast \big(\phi F(2^j\cdot)\big)\big] (2^{-j}t\sqrt[m]{L}).
\end{eqnarray}
 Now  we recall that if $1\leq q\leq \infty$ and $\alpha-1/q>0$, then
$$
W^{\alpha}_q \subseteq  B^{\alpha}_{q, \, \infty} \subseteq  B^{\alpha-{1\over q}}_{\infty, \, \infty}
\subseteq
\Lambda_{\min\{\alpha-{1\over q}, \, {1\over 2}\}}
$$
 and  $\|F\|_{\Lambda_{\min\{\alpha-{1\over q}, \, {1\over 2}\}}}\leq C\|F\|_{W^{\alpha}_q}$.
See, e.g., \cite[Chap.~VI ]{BL} for more details.  Hence
\begin{eqnarray*}
\|\phi \delta_{t}F \|_{\Lambda_{\min\{\alpha-1/q, \, 1/2\}}}\leq
C\|\phi \delta_{t}F\|_{W^{\alpha}_q}.
\end{eqnarray*}
This  implies that 
$$
\|\theta_\ell\ast \big(\phi F(2^j \cdot)\big)\|_{\infty}\leq C2^{-\ell \epsilon}\sup_{t>0}\|\phi \delta_{t}F\|_{W^{\alpha}_q}
$$
with $\epsilon=\min\{\alpha-1/q, \, 1/2\} $. Hence
 $$\|\big[\theta_\ell\ast \big(\phi F(2^j\cdot)\big)\big] (2^{-j}t\sqrt[m]{L})\|_{2\to 2} \leq C2^{-\ell \epsilon}
 \sup_{t>0}\|\phi \delta_{t}F\|_{W^{\alpha}_q}.
 $$
By \eqref{e3.16} and the fact  that $\|\phi(\sqrt[m]{L})\|_{2 \to 2}\leq C$
  \begin{eqnarray}\label{e3.18}
\|\big(\phi \delta_tF_{\ell}\big)(\sqrt[m]{L})\|_{2\to 2}  &\leq &  C2^{-\ell \epsilon}
 \sup_{t>0}\|\phi \delta_{t}F\|_{W^{\alpha}_q}.
\end{eqnarray}
Note that for each $\ell$, the function $ \theta_\ell\ast \big(\phi F(2^j\cdot)\big)$ is supported in $\{\xi: \ {1\over 4}\leq |\xi|\leq 4 \}.$
By \eqref{e3.11}
\begin{eqnarray*}
\| \big[\theta_\ell\ast \big(\phi F(2^j\cdot)\big)\big] (2^{-j} t\sqrt[m]{L})\|_{p\to p}
&\leq& C\|  \theta_\ell\ast \big(\phi F(2^j\cdot)\big) \|_{W^{\alpha}_q}\\
&\leq & C\sup_{t>0}\|\phi \delta_{t}F\|_{W^{\alpha}_q}.
\end{eqnarray*}
 (a) of Lemma~\ref{le2.4} shows   that  $\|\phi(\sqrt[m]{L})\|_{p \to p}\leq C.$ By \eqref{e3.16}
   \begin{eqnarray*}
\|\big(\phi \delta_tF_{\ell}\big)(\sqrt[m]{L})\|_{p\to p}  &\leq &   C \sup_{t>0}\|\phi \delta_{t}F\|_{W^{\alpha}_q}.
\end{eqnarray*}
Then it follows from the interpolation theorem   that for every $r\in (p, 2)$,
$$
\|\big(\phi \delta_tF_{\ell}\big)(\sqrt[m]{L})\|_{r\to r}\leq C_r2^{- \eta_r\ell } \sup_{t>0}\|\phi\delta_t F\|_{W^{\alpha}_q}
$$
with $\eta_r= {\epsilon\big({1/r}-{1/p}\big)/\big({1/2}-{1/p }\big)}$, and this  shows \eqref{e3.15}.

By Theorem~\ref{th3.1} for every $M>1+{n\over 2}$,
    \begin{eqnarray}\label{e3.19}\hspace{1cm}
\|  F_{\ell}(\sqrt[m]{L})\|_{r\to r}  &\leq &   C2^{-\eta_r \ell} \sup_{t>0}\|\phi \delta_{t}F\|_{W^{\alpha}_q}
 \bigg\{ {\rm log}  \Big(2+ { \sup_{t>0}\|\phi  \delta_t F_{\ell}\|_{  W^{M+n+1}_2}
  \over 2^{-\eta_r \ell}\sup_{t>0}\|\phi \delta_{t}F\|_{W^{\alpha}_q}}\Big)
 \bigg\}^{|{1\over r}-{1\over 2}|}.
\end{eqnarray}
Let $s\geq 1$ such that ${1\over q} +{1\over s}={3\over 2}$. The Plancherel theorem and
Young's inequality yields 
that  if $\ell\geq 1$, and   $t\in [2^k, 2^{k+1}]$,
  \begin{eqnarray*}
\| \phi(\cdot)\big[\theta_\ell\ast \big(\phi F(2^j\cdot)\big)\big] (2^{-j}t\cdot)\|_{  W^{M+n+1}_2}
 \leq
\|  (1+\xi^2)^{ {M+n+1\over 2}}\widehat{\phi F(2^j\cdot)}(\xi)\widehat \theta_{\ell} (\xi) \|_{2}\\
= \|{\mathscr F}^{-1}\big((1+\xi^2)^{\alpha\over 2}\widehat{\phi F(2^j\cdot)}(\xi)\big) \ast
{\mathscr F}^{-1}\big((1+\xi^2)^{{M+n+1-\alpha\over 2}}\widehat \theta_{\ell}(\xi)\big)\|_{2}\\
\leq  \|\phi F(2^j\cdot)\|_{W^{\alpha}_q} \|\theta_{\ell}\|_{W^{M+ n+1 -\alpha}_s} \\
 \leq C2^{\ell ( n+ M-\alpha+{1\over 2}+{1\over q}) } \sup_{t>0}\|\phi \delta_{t}F\|_{W^{\alpha}_q}.
\end{eqnarray*}
Hence
\begin{eqnarray}\label{e3.20}
\sup_{t>0}\|\phi  \delta_t F_{\ell}\|_{  W^{M+n+1}_2}
\leq C2^{\ell ( n+ M-\alpha+{1\over 2} +{1\over q}) } \sup_{t>0}\|\phi \delta_{t}F\|_{W^{\alpha}_q}.
\end{eqnarray}
Substituting \eqref{e3.20} into \eqref{e3.19}, we get
     \begin{eqnarray*}
\|  F_{\ell}(\sqrt[m]{L})\|_{r\to r}  &\leq &   C2^{-\eta_r \ell} (1+\ell)^{|{1\over r}-{1\over 2}|}
\sup_{t>0}\|\phi \delta_{t}F\|_{W^{\alpha}_q}.
\end{eqnarray*}

Analogously, $\|  F_{0}(\sqrt[m]{L})\|_{r\to r}   \leq     C
\sup_{t>0}\|\phi \delta_{t}F\|_{W^{\alpha}_q}.$
Summing a geometrical series we obtain
$
\|  F(\sqrt[m]{L})\|_{r\to r}   \leq     C \sup_{t>0}\|\phi \delta_{t}F\|_{W^{\alpha}_q}.
$
This completes the proof.
\end{proof}

\bigskip

\section{Dyadiclly  supported (non-singular)  spectral multipliers.}\label{sec4}
\setcounter{equation}{0}

In this section,
we will  show  that restriction type conditions can be used to study spectral  multipliers
corresponding to  functions supported in dyadic intervals. We assume that 
   $(X, d, \mu)$ is  a  metric measure  space satisfying the doubling property
 and   $n$ is the  doubling dimension from  condition \eqref{e1.3}.

\subsection{Operators with continuous spectrum}
 Consider a non-negative self-adjoint operator $L$  and numbers
$p $ and $q$ such that $1\leq p< 2$ and $1\leq
q\leq\infty$.  Given $R_0\geq 0$, we say that
operator $L$ satisfies the
    {\it  local Stein-Tomas restriction type condition} if:  for any $R>R_0$ and
 for all Borel functions $F$ such that $\supp F \subset [R/2, R]$,
$$
\big\|F(\SL)P_{B(x, r)} \big\|_{p\to 2} \leq CV(x,
r)^{{1\over 2}-{1\over p}} \big( Rr \big)^{n({1\over p}-{1\over
2})}\big\|\delta_RF\big\|_{q}
\leqno{\rm (ST^{q}_{p, 2, m})_{R_0}}
$$
  for all
  $x\in X$ and all $r\geq 1/R$.

 The condition ${\rm (ST^{q}_{p, 2, m})_{R_0}}$ is a small modification of the restriction type condition ${\rm (ST^{q}_{p, 2, m})}$
   Namely here we consider function supported in the interval $[R/2, R]$ rather then $[0,  R]$, which
  allows us to study localized version of spectral multipliers, see   Theorems~\ref{th4.5} and  ~\ref{th5.8} below.

    Note that condition   ${\rm (ST^{q}_{p, 2, m})}$ implies  ${\rm (ST^{q}_{p, 2, m})_{R_0}}$  for all
  $R_0 \ge 0$.
  If in addition we assume that $\chi_{\{ 0\}}(\sqrt L)=0$ then for $R_0=0$ condition ${\rm (ST^{q}_{p, 2, m})_{R_0}}$
  implies ${\rm (ST^{q}_{p, 2, m})}$.


We say that $L$ satisfies {\it $L^p$ to $L^{p'} $ restriction estimates} if  there exists  $\lambda_0\geq 0$ such that
the spectral
measure $dE_{\sqrt[m]{L}}(\lambda)$ maps  $L^p(X)$ to $L^{p'}(X)$ for some $p<2$, with an operator norm estimate
$$
\big\|dE_{\sqrt[m]{L}}(\lambda)\big\|_{p\to p'}\leq C \lambda^{n({1\over p}-{1\over p'})-{1}}\ \ \ {\mbox{for all $\lambda\ge \lambda_0$},}
\leqno{\rm (R_{p,m})_{\lambda_0}}
$$
 where $n$ is
   as in  condition (\ref{e1.3}) and
 $p'$ is conjugate of $p$, i.e., ${1/p} +{1/p'}=1$.

 \medskip

\begin{proposition}\label{prop4.4} Let $1\le p< 2$ and $R_0\geq 0$.
Suppose that  there exists a  constant $C>0 $  such that
$C^{-1}r^n \leq V(x, r)\leq C r^n$
for all $x\in X $ and $r>0$.  Then
conditions ${\rm (R_{p,m})_{R_0/2}}$, ${\rm (ST^{2}_{p, 2,m})_{R_0}}$ and
 ${\rm (ST^{1}_{p, p',m})_{R_0}}$
are equivalent.
\end{proposition}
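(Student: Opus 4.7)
I close the circle
$(R_{p,m})_{R_0/2} \Rightarrow (\mathrm{ST}^2_{p,2,m})_{R_0} \Rightarrow (\mathrm{ST}^1_{p,p',m})_{R_0} \Rightarrow (R_{p,m})_{R_0/2}$.
The key preliminary observation is that under $V(x,r)\sim r^n$ the weights
$V(x,r)^{1/2-1/p}(Rr)^{n(1/p-1/2)}$ and $V(x,r)^{1/p'-1/p}(Rr)^{n(1/p-1/p')}$ appearing in $(\mathrm{ST}^2)$ and $(\mathrm{ST}^1)$ collapse to $R^{n(1/p-1/2)}$ and $R^{n(1/p-1/p')}$ respectively, independent of $r$. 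A standard strong-convergence argument ($P_{B}f\to f$ as $B\nearrow X$, together with $L^{2}$-boundedness of $F(\sqrt[m]{L})$) then promotes the ball-projected versions to global norm estimates: $\|F(\sqrt[m]{L})\|_{p\to 2}\le CR^{n(1/p-1/2)}\|\delta_R F\|_2$ for $(\mathrm{ST}^2)$ and $\|F(\sqrt[m]{L})\|_{p\to p'}\le CR^{n(1/p-1/p')-1}\|F\|_1$ for $(\mathrm{ST}^1)$, and conversely the global estimates give back the ball-projected ones by composing with the contraction $P_{B(x,r)}$. So it is enough to prove the equivalences among these global forms.

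\textbf{Steps 1 and 2.} For Step 1, given $F$ supported in $[R/2,R]$ with $R\ge R_0$, the $TT^{*}$ identity
\[
\|F(\sqrt[m]{L})\|_{p\to 2}^{2}=\bigl\||F|^{2}(\sqrt[m]{L})\bigr\|_{p\to p'}
\]
combined with $(R_{p,m})_{R_0/2}$ (applicable since $\lambda\in[R/2,R]$ implies $\lambda\ge R_0/2$) gives
\[
\|F(\sqrt[m]{L})\|_{p\to 2}^{2}\le C\int_{R/2}^{R}|F(\lambda)|^{2}\lambda^{n(1/p-1/p')-1}\,d\lambda\le CR^{n(1/p-1/p')}\|\delta_{R}F\|_{2}^{2},
\]
which is $(\mathrm{ST}^2_{p,2,m})_{R_0}$ in its global form. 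For Step 2, factor $F=G_{1}G_{2}$ with $|G_{1}|=|G_{2}|=|F|^{1/2}$, both supported in $[R/2,R]$, with phases arranged so that the pointwise product is $F$. Using the duality $\|G_{1}(\sqrt[m]{L})\|_{2\to p'}=\|\overline{G_{1}}(\sqrt[m]{L})\|_{p\to 2}$ and applying the global $(\mathrm{ST}^{2})$ bound to both factors (noting $\|\delta_{R}G_{i}\|_{2}^{2}=\|\delta_{R}F\|_{1}$),
\[
\|F(\sqrt[m]{L})\|_{p\to p'}\le\|\overline{G_{1}}(\sqrt[m]{L})\|_{p\to 2}\,\|G_{2}(\sqrt[m]{L})\|_{p\to 2}\le CR^{n(1/p-1/p')-1}\|F\|_{1},
\]
which is the global $(\mathrm{ST}^{1}_{p,p',m})_{R_0}$.

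\textbf{Step 3 and main obstacle.} For $\lambda\ge R_0/2$, set $R=2\lambda\ge R_0$ and apply the global $(\mathrm{ST}^{1})$ bound to $F=\chi_{A}$ for any Borel set $A\subseteq[\lambda,2\lambda]\subset[R/2,R]$, obtaining $\|E_{\sqrt[m]{L}}(A)\|_{p\to p'}\le C\lambda^{n(1/p-1/p')-1}|A|$, which is the density estimate $(R_{p,m})_{R_0/2}$. The main delicate point is the phase factorization in Step 2: one must construct measurable $G_{1},G_{2}$ supported in $[R/2,R]$ with $|G_{i}|=|F|^{1/2}$ and $G_{1}G_{2}=F$, so that the spectral multipliers genuinely compose as $F(\sqrt[m]{L})=G_{1}(\sqrt[m]{L})G_{2}(\sqrt[m]{L})$. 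Beyond this, the entire argument leans on the homogeneity assumption $V(x,r)\sim r^{n}$, which is what makes the $r$-dependent weights collapse and allows the ball projections in $(\mathrm{ST}^{2})$ and $(\mathrm{ST}^{1})$ to be removed; without this hypothesis the three conditions are no longer equivalent in so clean a form.
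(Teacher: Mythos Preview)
Your proof is correct and uses the same toolbox as the paper (the $TT^{*}$ identity, integration against the spectral measure, and testing on characteristic functions together with a limiting argument), but you close the circle in the \emph{opposite} order: the paper proves
\[
(R_{p,m})_{R_0/2}\ \Rightarrow\ (\mathrm{ST}^{1}_{p,p',m})_{R_0}\ \Rightarrow\ (\mathrm{ST}^{2}_{p,2,m})_{R_0}\ \Rightarrow\ (R_{p,m})_{R_0/2}.
\]
In particular, where you go $(\mathrm{ST}^{2})\Rightarrow(\mathrm{ST}^{1})$ by the square-root factorisation $F=G_{1}G_{2}$, the paper instead goes $(\mathrm{ST}^{1})\Rightarrow(\mathrm{ST}^{2})$ by applying the $L^{1}$ hypothesis to $|F|^{2}$ and invoking $TT^{*}$; this avoids having to choose phases for $G_{1},G_{2}$ (which, as you note, is the only delicate point in your route, though it is resolved trivially by writing $G_{1}=G_{2}=|F|^{1/2}e^{i\arg F/2}$). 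Likewise the paper obtains $(R)$ from $(\mathrm{ST}^{2})$ (via $F=\chi_{(\lambda-\varepsilon,\lambda+\varepsilon]}$ and $\varepsilon\to0$) rather than from $(\mathrm{ST}^{1})$; and it gets $(\mathrm{ST}^{1})$ from $(R)$ by the one-line bound $\|F(\sqrt[m]{L})\|_{p\to p'}\le\int|F(\lambda)|\,\|dE_{\sqrt[m]{L}}(\lambda)\|_{p\to p'}\,d\lambda$. The two cycles are equivalent in difficulty; the paper's order is marginally cleaner since every step is a direct substitution with no auxiliary factorisation needed.
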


 \begin{proof} The proof is similar to  that of  Proposition 2.4 of \cite{COSY} with minor modifications.  We give a brief argument
of this proof for completeness and  readers' convenience.

We first  show the implication
${\rm (R_{p,m})_{R_0/2}}\Rightarrow {\rm (ST^{1}_{p, p',m})_{R_0}}$.
Suppose that $F$  is a  Borel function    such that $\supp F \subset [R/2,  R]$
for some $R> R_0$.  Then by ${\rm (R_{p,m})_{R_0/2}}$
\begin{eqnarray*}
\big\|F(\SL) P_{B(x,r)} \big\|_{p\to p'}
 &\leq&  \int_0^\infty
|F(\l)| \|dE_{\sqrt[m]{L}}(\l)\|_{p\to
p'}d\l\\
 &\leq&  C\int_{R/2}^{R} |F(\l)| \lambda^{n({{\frac{1}{p}}-{\frac{1}{p'}}})-1}d\l \\
 &\leq&  CR^{n({{\frac{1}{p}}-{\frac{1}{p'}}})-1}\int_{R/2}^{R} |F(\l)|  d\l \\
 &\leq&  C V(x,r)^{{\frac{1}{p'}}-{\frac{1}{p}}}  (rR)^{n({{\frac{1}{p}}-{\frac{1}{p'}}})} \|\delta_{R}F\|_{1},
\end{eqnarray*}
where in the last inequality we used the assumption that  $V(x, r) \le C  r^n$.

Next we prove that ${\rm (ST^{1}_{p,p',m})_{R_0}}\Rightarrow {\rm (ST^{2}_{p, 2,m})_{R_0}}$. Note that
$  V(x, r)\sim r^n$ for every $x\in X$ and $r>0.$
Letting $r \to \infty$ we obtain from ${\rm (ST^{1}_{p,p',m})_{R_0}}$
$$ \big\|F(\SL)\big\|_{p\to p'} \le CR^{n({{1\over p}-{1\over p'}})}\|\delta_{R}F\|_{1}, \ \ R>R_0.$$
By $T^{\ast} T$ argument
$$
 \big\|F(\SL)\big\|^2_{p\to 2}
 =   \big\| |F|^2(\SL)\big\|_{p\to p'}
 \leq  CR^{2n({{1\over p}-{1\over 2}})}\|\delta_{R}F\|^2_{2}.
$$
Hence
\begin{eqnarray*}
 \big\|F(\SL)P_{B(x, r)}\big\|_{p\to 2} \leq  \big\|F(\SL)\big\|_{p \to 2}
 &\leq& CV(x,r)^{{\frac{1}{2}}-{\frac{1}{p}}}(Rr)^{n({{\frac{1}{p}}-{\frac{1}{2}}})}\|\delta_{R}F\|_{2}.
\end{eqnarray*}
 This gives ${\rm (ST^{2}_{p, 2, m})_{R_0}}$.

 Now, we   prove the remaining implication
${\rm (ST^{2}_{p, 2, m})_{R_0}}\Rightarrow  {\rm (R_{p,m})_{R_0/2}}$. By
volume estimate   $V(x, r)\geq C^{-1} r^n$  and condition  ${\rm (ST^{2}_{p, 2, m})_{R_0}}$
\begin{eqnarray}\label{e4.3}
\big\|F(\SL)P_{B(x, r)} \big\|_{p\to 2} \leq
C R^{n({1\over p}-{1\over 2})}\big\|\delta_{R}F\big\|_{2}
\end{eqnarray}
  for all Borel functions $F$ such that $\supp F \subset [R/2,  R]$,  all $R>R_0$, all
  $x\in X$ and $r\geq 1/R$. Taking the limit $r\to \infty$ gives  \smallskip
\begin{eqnarray}\label{e4.4}
\big\|F(\SL)  \big\|_{p\to 2} \leq
C R^{n({1\over p}-{1\over 2})}\big\|\delta_{R}F\big\|_{2}.
 \end{eqnarray}
 \noindent
Let $\epsilon<R/4$. Putting  $F=\chi_{(\lambda-\varepsilon, \lambda+\eps]}$ and $R=\lambda+\epsilon$  in  (\ref{e4.4}) yields
 \begin{eqnarray*}
 \Big\|\varepsilon^{-1}{E_{\sqrt[m]{L}}(\lambda-\eps, \lambda+\varepsilon]  } \Big\|_{p\to p'}
 &=&\varepsilon^{-1}\Big\| {E_{\sqrt[m]{L}}(\lambda-\eps, \lambda+\varepsilon]  } \Big\|^2_{p\to 2} \\
 &\leq&
C \varepsilon^{-1}(\lambda+ \eps)^{2n({1\over p}-{1\over 2})}
\big\| \chi_{({ \lambda-\eps\over  \lambda+\epsilon}, \, 1]}\big\|^2_{2} \\
 &\leq& C (\lambda+\epsilon)^{n({1\over p}-{1\over p'})-1}.
 \end{eqnarray*}
 Taking $\varepsilon\to 0$ yields condition ${\rm (R_{p, m})_{R_0/2}}$ (see Proposition 1, Chapter XI, \cite{Yo}).
\end{proof}

\bigskip

 The  following result  and its  proof  are
inspired  by  Theorem 1.1 of  \cite{GHS}.  See also Theorem 3.1 of \cite{COSY}.

 \begin{theorem}\label{th4.5} Suppose that  $(X, d, \mu)$ and
  a non-negative self-adjoint operator $L$ acting on $L^2(X)$ satisfies  estimates ${\rm (DG_m)}$
  and ${\rm (G_{p, 2, m})}$ for some $1\leq p<2$. Next assume that  condition  ${\rm (ST^q_{p, 2, m})_{R_0}}$ holds   for some $R_0 \ge 0$ and
  and $1\leq  q\leq \infty$ and that  $F$ is a bounded Borel  function such that  $\supp F\subseteq [1/4,4]$
 and $F \in W^{\alpha}_q({\mathbb R})$
for some $\alpha>n(1/p-1/2)$.

Then for every $ p < r\leq 2$, $F(t \sqrt[m]{L})$ is bounded on $L^r(X)$,
\begin{equation}
\label{e4.6}
\sup_{t < 1/(8R_0)}\|F(t \sqrt[m]{L})\|_{r\to r} \leq
C_r\|F\|_{W^{\alpha}_q}
\end{equation}
and
\begin{equation}
\label{e4.7}
\sup_{t\geq  1/(8R_0)}\|F(t \sqrt[m]{L})\|_{r\to r} \leq
C_r\|F\|_{W^{\alpha}_{\infty}}.
\end{equation}
\end{theorem}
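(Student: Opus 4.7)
The plan is to follow the argument of Theorem~3.1 in \cite{COSY}, adapted to the $m$-th order setting by replacing finite speed of propagation with the off-diagonal estimate of Lemma~\ref{le2.2}. By Proposition~\ref{prop2.3}, the hypothesis $(\mathrm{G}_{p,2,m})$ automatically yields $(\mathrm{ST}^{\infty}_{p,2,m})$, and this will drive \eqref{e4.7}, while \eqref{e4.6} exploits the sharper $(\mathrm{ST}^{q}_{p,2,m})_{R_0}$.

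The threshold $t = 1/(8R_0)$ is natural: when $t < 1/(8R_0)$, the function $\lambda \mapsto F(t\lambda)$ is supported in $[1/(4t), 4/t] \subset (2R_0, \infty)$, so every dyadic sub-piece of it lies above $R_0$ and falls within the applicability of $(\mathrm{ST}^{q}_{p,2,m})_{R_0}$. For $t \geq 1/(8R_0)$ the spectral support is confined to the bounded set $[1/(4t),\, 32R_0]$, and the argument is structurally identical with $(\mathrm{ST}^{\infty}_{p,2,m})$ and $\|F\|_{W^\alpha_\infty}$ in place of $(\mathrm{ST}^{q}_{p,2,m})_{R_0}$ and $\|F\|_{W^\alpha_q}$.

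The heart of the proof is the localized off-diagonal estimate
\begin{equation*}
\big\|P_B\, F(t\sqrt[m]{L})\, P_{A(x_B,\, t^{1/m},\, j)}\big\|_{p\to 2} \le C\, V(B)^{{1/2}-{1/p}}\, c_j\, \|F\|_{W^\alpha_q},
\end{equation*}
for $B = B(x_B, t^{1/m})$, with $\sum_{j\ge 0}(1+j)^{N} c_j < \infty$ for a suitable $N$. To obtain this I would Fourier-decompose $F = \sum_{\ell\ge 0} F_\ell$ with $F_\ell$ concentrated at frequency scale $2^\ell$, exactly as in the proof of Theorem~\ref{th3.3} through formula~\eqref{e3.14}. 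For each $F_\ell$, the restriction condition $(\mathrm{ST}^{q}_{p,2,m})_{R_0}$ controls the diagonal part $j=0$, while a Sobolev embedding argument analogous to the one producing~\eqref{e3.18} extracts a factor $2^{-\ell\epsilon}$, with $\epsilon > 0$ measuring the gap $\alpha - n(1/p-1/2)$. For the off-diagonal pieces $j\ge 1$ I would invoke Lemma~\ref{le2.2}, which trades a power $j^{-M}$ against polynomial growth $2^{\ell\kappa}$ in the frequency scale, exactly as in~\eqref{e3.20}; balancing the two and summing in $\ell$ produces the claimed $c_j$.

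With this localized estimate in hand, the $L^r$ bound for $p < r \le 2$ follows by a routine argument: decompose $f = \sum_j f\chi_{A_j}$ around a given ball $B \ni x$, apply the localized $L^p\to L^2$ bound piece by piece, dominate the resulting sum by the Hardy--Littlewood maximal function of $|f|^p$ via the doubling condition~\eqref{e1.3}, and interpolate the resulting weak $(p,p)$ bound with the trivial estimate $\|F(t\sqrt[m]{L})\|_{2\to 2}\le \|F\|_\infty\le C\|F\|_{W^\alpha_q}$. The main obstacle is the joint book-keeping of the frequency decomposition in $\ell$ and the spatial annular decomposition in $j$: the $m$-th order Davies--Gaffney decay $\exp(-c\, j^{m/(m-1)})$, though weaker than classical Gaussian for $m > 2$, remains rapidly summable, and the required balancing between the smoothness of $F$ and the spatial decay is precisely what enforces the sharp regularity threshold $\alpha > n(1/p-1/2)$.
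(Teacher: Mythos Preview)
Your overall strategy—frequency decomposition of $F$, restriction estimate on the diagonal, off-diagonal decay for the tail, then a maximal-function/interpolation argument—is in the right spirit, but there is a genuine gap in the off-diagonal step, and the paper's route is substantially different from what you describe.

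\textbf{The gap.} You propose to control the off-diagonal pieces $j\ge 1$ via Lemma~\ref{le2.2}. But Lemma~\ref{le2.2} only yields an $L^2\to L^2$ bound
\[
\big\|P_{B}\,F(\SL)\,P_{A(x_B,r_B,j)}\big\|_{2\to 2}\le C\,j^{-M}\,(Rr_B)^{-(M+n)}\,\|\delta_R F\|_{W^{M+n+1}_2},
\]
whereas your claimed localized estimate is $p\to 2$. There is no Hölder or embedding that turns an $L^2\to L^2$ annular bound into an $L^p\to L^2$ one without losing the decay in $j$; and if you simply use the restriction bound $\|F_\ell(t\SL)P_{(j+1)B}\|_{p\to 2}$ uniformly in $j$, the resulting coefficients $c_j\sim(1+j)^{n(1/p-1/2)}$ fail the summability requirement $\sum_j(1+j)^{n/p}c_j<\infty$. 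So as written, the argument does not close. (A $p\to 2$ analogue of Lemma~\ref{le2.2} would require $({\rm GGE}_{p,2,m})$, which the hypotheses only give strictly above $p$ by interpolation; even then, the high-regularity norm $\|F_\ell\|_{W^{M+n+1}_2}$ grows like $2^{\ell(M+n+1-\alpha)}$, and the balancing you allude to does not converge for fixed ball radius.)

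\textbf{What the paper actually does.} The paper does \emph{not} use Lemma~\ref{le2.2} here. The Fourier decomposition is \eqref{e4.8}--\eqref{e4.10} (on the Fourier side of $G(\lambda)=F(\sqrt[m]{\lambda})e^{\lambda}$), and for each $\ell$ the space $X$ is covered by balls of radius $\rho_\ell=2^{\ell(1+\varepsilon)}t$, not by annuli around a fixed ball. The near-diagonal term is handled directly in $L^r$ via Hölder on each ball together with $({\rm ST}^q_{p,2,m})_{R_0}$ applied at scale $\rho_\ell$; the $\ell$-dependent radius is precisely what produces the factor $2^{\ell(1+\varepsilon)n(1/p-1/2)}$ matching the Besov decay of $G^{(\ell)}$. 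The far-diagonal term is controlled by a new Lemma~\ref{le4.6}, which writes $F^{(\ell)}(t\SL)$ as an integral of the \emph{complex-time} semigroup $e^{(i\tau-1)t^mL}$ over $|\tau|\sim 2^\ell$ and invokes Lemma~\ref{le2.1} to obtain direct $L^r\to L^r$ exponential off-diagonal decay. This use of Lemma~\ref{le2.1}, not Lemma~\ref{le2.2}, is exactly the substitute for finite speed of propagation.
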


\medskip

 \noindent
 {\em Proof}.
Let   $\phi \in C_c^{\infty}(\mathbb R) $   be  a function such that $\supp \phi\subseteq \{ \xi: 1/4\leq |\xi|\leq 1\}$ and
 $
\sum_{\ell\in \ZZ} \phi(2^{-\ell} \lambda)=1$ for all
${\lambda>0}.
$
Set $\phi_0(\lambda)= 1-\sum_{\ell=0}^{\infty} \phi(2^{-\ell} \lambda)$,
\begin{eqnarray}\label{e4.8}
G^{(0)}(\lambda)=\frac{1}{2\pi}\int_{-\infty}^{+\infty}
 \phi_0(\tau) \hat{G}(\tau) e^{i\tau\lambda} \;d\tau
\end{eqnarray}
and
\begin{eqnarray}\label{e4.9}
G^{(\ell)}(\lambda) =\frac{1}{2\pi}\int_{-\infty}^{+\infty}
 \phi(2^{-\ell}\tau) \hat{G}(\tau) e^{i\tau\lambda} \;d\tau,
\end{eqnarray}
where $G(\lambda)=F(\sqrt[m]\lambda) e^{\lambda}$.  Note that in by the Fourier inversion formula
$$
G(\lambda)=\sum_{\ell=0}^{\infty}G^{(\ell)}(\lambda).
$$
Then
\begin{eqnarray}\label{e4.10}
F(\sqrt[m]\lambda)= G(\lambda)e^{-\lambda}
&=&\sum_{\ell=0}^{\infty}G^{(\ell)}(\lambda)e^{-\lambda}
=:\sum_{\ell=0}^{\infty}F^{(\ell)}(\sqrt[m]\lambda)
\end{eqnarray}
so
\begin{eqnarray}\label{e4.11}
\big\|F(t\SL)\big\|_{r\to r } &\le& \sum_{\ell=0}^{\infty}\big\|F^{(\ell)}(t\sqrt[m]{L}) \big\|_{r\to r}, \ \ \ \ r\in (p, 2).
\end{eqnarray}

Next we   fix  $ \varepsilon>0$  such that
 $
 2n\varepsilon ({1/p}-{1/2}) \leq  \alpha-n({1/p}-{1/2}).
 $
For every $t>0$ and every  $\ell$ set $\rho_{\ell}=2^{\ell(1+\varepsilon)}t$.
Then  we choose a sequence $(x_n)  \in X$ such that
$d(x_i,x_j)> \rho_{\ell}/10$ for $i\neq j$ and $\sup_{x\in X}\inf_i d(x,x_i)
\le \rho_{\ell}/10$. Such sequence exists because $X$ is separable.
Now set $B_i=B(x_i, \rho_{\ell})$ and define $\widetilde{B_i}$ by the formula
$$\widetilde{B_i}=\bar{B}\left(x_i,\frac{\rho_{\ell}}{10}\right)\setminus
\bigcup_{j<i}\bar{B}\left(x_j,\frac{\rho_{\ell}}{10}\right),$$
where $\bar{B}\left(x, \rho_{\ell}\right)=\{y\in X \colon d(x,y)
\le \rho_{\ell}\}$. Note that for $i\neq j$,
 $B(x_i, \frac{\rho_{\ell}}{20}) \cap B(x_j, \frac{\rho_{\ell}}{20})=\emptyset$.

Observe that for every $k\in{\mathbb N}$,
\begin{eqnarray}\label{e4.12}
\sup_i\#\{j:\;d(x_i,x_j)\le  2^k\rho_{\ell}\} &\le&
  \sup_{d(x,y) \le 2^k\rho_{\ell}}  {V(x, 2^{k+1}\rho_{\ell})\over
    V(y, \frac{\rho_{\ell}}{20})}\nonumber\\
   &\le& C 2^{kn}
  \sup_y  {V(y, 2^{k+2}\rho_{\ell})\over
  V(y, \frac{\rho_{\ell}}{20})}
    \le C 2^{kn}.
\end{eqnarray}
Set
$
\D_{\rho_{\ell}}=\{ (x,\, y)\in X\times X: {d}(x,\, y) \le \rho_{\ell} \}.
$
It is not difficult to see that
\begin{eqnarray}\label{e4.13}
\D_{\rho_{\ell}}  \subseteq \bigcup_{\{i,j:\, d(x_i,x_j)<
 2 \rho_{\ell}\}} \widetilde{B}_i\times \widetilde{B}_j \subseteq \D_{4 \rho_{\ell}}.
\end{eqnarray}

Now let  $\psi \in C_c^\infty(1/16, 16)$ be a such function
that $\psi(\lambda)=1$ for $\lambda \in (1/8,8)$,
and  we decompose
\begin{eqnarray}\label{e4.14}
 F^{(\ell)}(t\sqrt[m]{L})f
&=& \sum_{i,j:\, {d}(x_i,x_j)< 2\rho_{\ell}} P_{\widetilde B_i}\big[\psi F^{(\ell)} (t\sqrt[m]{L}) \big]P_{\widetilde
B_j}f\nonumber\\
  &&+ \sum_{i,j:\, {d}(x_i,x_j)< 2\rho_{\ell}} P_{\widetilde B_i}\big[(1-\psi)
F^{(\ell)} (t\sqrt[m]{L}) \big] P_{\widetilde
B_j}f\nonumber\\
&&+  \sum_{i,j:\, {d}(x_i,x_j)\geq 2\rho_{\ell}} P_{\widetilde B_i}F^{(\ell)}(t\sqrt[m]{L}) P_{\widetilde
B_j}f=I+ I\!I +I\!I\!I.
\end{eqnarray}

\medskip

\noindent
{{\em Estimate for  {\it I}}} .   \ By
 H\"older's inequality,
\begin{eqnarray}\label{e4.15}
 \| \sum_{i,j:\, {d}(x_i,x_j)< 2\rho_{\ell}} P_{\widetilde B_i}\big(\psi F^{(\ell)}\big) (t\sqrt[m]{L}) P_{\widetilde
B_j}f\|_{r}^r
 =\sum_i \|\sum_{j:\,{d}(x_i,x_j)<
2\rho_{\ell}} P_{\widetilde B_i}\big(\psi F^{(\ell)}\big)(t\sqrt[m]{L})P_{\widetilde B_j}f\|_{r}^r  \nonumber \\
\le C   \sum_i \sum_{j:\,{d}(x_i,x_j)< 2\rho_{\ell}} \| P_{\widetilde
B_i}\big(\psi F^{(\ell)}\big)(t\sqrt[m]{L})P_{\widetilde B_j}f\|_{r}^r
\nonumber \\
\le C  \sum_i   \sum_{j:\,{d}(x_i,x_j)< 2\rho_{\ell}}
V(\widetilde{B}_i)^{r({1\over r}-{1\over 2})} \|P_{\widetilde
B_i}\big(\psi F^{(\ell)}\big)(t\sqrt[m]{L})P_{\widetilde B_j}f\|_{2}^r
\nonumber \\ \le C   \sum_j     V( {B}_j)^{r({1\over r}-{1\over
2})} \|\big(\psi F^{(\ell)}\big)(t\sqrt[m]{L})P_{ \widetilde B_j}f\|_{2}^r
\nonumber \\
\le C  \sum_j V( {B}_j)^{r({1\over r}-{1\over
2})}\|\big(\psi F^{(\ell)}\big)(t\sqrt[m]{L})P_{\widetilde B_j}\|_{p\to 2}^r
\| P_{\widetilde B_j}\|_{r\to p}^r\|P_{\widetilde
B_j}f\|_{r}^r
\nonumber \\
 \le C \sup_{x\in X}\big\{V(x,\rho_{\ell})^{r({1\over p}-{1\over
2})}
\|\big(\psi F^{(\ell)}\big)(t\sqrt[m]{L})P_{B(x,\rho_{\ell})}\|^r_{p\to 2}\big\}  \sum_j\|P_{\widetilde
B_j}f\|_{r}^r
\nonumber \\
=  C   \sup_{x\in X}\big\{V(x,\rho_{\ell})^{r({1\over p}-{1\over
2})} \|\big(\psi F^{(\ell)}\big)(t\sqrt[m]{L})P_{B(x,\rho_{\ell})}\|^r_{p\to 2}\big\}  \|f\|_{r}^r.
\end{eqnarray}

\medskip

\noindent
{\it Case 1.}   $ t < 1/(8R_0)$.

\smallskip

We assume that  $\psi \in
C_c(1/16,16)$ so we can  write
$
 \big(\psi F^{(\ell)}\big)(t\sqrt[m]{L})
=\sum_{k=0}^{7}  \big(\chi_{[2^{k-4}, 2^{k-3}) }\psi
F^{(\ell)}\big)(t\sqrt[m]{L}).
$
 If  $t<1/(8R_0)$, then we   use
condition ${\rm (ST^q_{p, 2, m})_{R_0}}$   to show that for every  $\ell \ge 4$,
  \begin{eqnarray}\label{e4.16}
\big\|\big(\psi F^{(\ell)}\big)(t\sqrt[m]{L})P_{B(x, \rho_\ell)}\big\|_{p\to 2 }
&\leq& CV(x,\rho_\ell)^{{1\over 2}-{1\over p}}2^{\ell(1+\varepsilon)n({1\over
p}-{1\over 2})} \sum_{k=0}^{7}  \big\| \delta_{2^{k-3}t^{-1}}\big(\psi
F^{(\ell)}\big) (t\cdot)\big\|_{q}\nonumber\\
&\leq& CV(x,\rho_\ell)^{{1\over 2}-{1\over p}}2^{\ell(1+\varepsilon)n({1\over
p}-{1\over 2})} \| G^{(\ell)} \|_{q}.
 \end{eqnarray}
Note that  by Proposition~\ref{prop2.3},   ${\rm (G_{p,2,m})}$ implies ${\rm (ST^{\infty}_{p, 2, m})}$.
From this,     it can be verified  that for
 $\ell=0, 1,2,3$, $\big\|\big(\psi F^{(\ell)}\big)(t\sqrt[m]{L})P_{B(x, \rho_\ell)}\big\|_{p\to 2 }
 \leq
   CV(x,\rho_\ell)^{{1\over 2}-{1\over p}}  \|F  \|_{q}.$
  Hence
 \begin{eqnarray}\label{e4.17}
\sum_{\ell=0}^{\infty} \sup_{x\in X}\big\{ V(x,\rho_{\ell})^{{1\over p}-{1\over
2}}\big\|\big(\psi F^{(\ell)}\big)(t\sqrt[m]{L})P_{B(x,\rho_{\ell})}\big\|_{p\to
2} \big\}
 &\le & C \|F  \|_{q} + C\sum_{\ell=4}^{\infty}     2^{\ell(1+\varepsilon) n({1\over p}-{1\over 2})}\| G^{(\ell)} \|_{q}
\nonumber\\
 &\leq &  C \|F  \|_{q} + C\|G\|_{B_{q,\, 1}^{ n({1\over p}-{1\over 2}) +\delta}},
\end{eqnarray}
where  $\delta={\varepsilon n }({1\over  p}-{1\over  2})$ and
the last equality follows from definition of Besov space.
See, e.g., \cite[Chap.~VI ]{BL}.
Since $2\delta <\alpha-n({1\over p}-{1\over 2})$,
we have that  $W^{\alpha}_q\subseteq
B_{q, \, 1}^{ n({1/p}-{1/2})+\delta}$ with  $\|G\|_{B_{q,\, 1}^{ n(1/p-1/2)+\delta}}\le C_\alpha
\|G\|_{W^{\alpha}_q}$, see again
\cite{BL}.  However, $\supp F\subseteq [1/4, 4]$ so $ \|G\|_{W^{\alpha}_q}\le
\|F\|_{W^{\alpha}_q}$. Hence the forgoing estimates give
\begin{eqnarray}\label{e4.18}
\sum_{\ell=0}^{\infty}   \sup_{x\in X}\big\{ V(x,\rho_{\ell})^{{1\over p}-{1\over
2}}\big\|\big(\psi F^{(\ell)}\big)(t\sqrt[m]{L})P_{B(x,\rho_{\ell})}\big\|_{p\to
2} \big\}  &\le& C\|F\|_{W^{\alpha}_q}.
\end{eqnarray}

 \medskip

\noindent
{\it Case 2.}   $ t \geq  1/(8R_0)$.

\smallskip

Note that  by Proposition~\ref{prop2.3},   ${\rm (G_{p,2,m})}$ implies ${\rm (ST^{\infty}_{p, 2, m})}$.
At the step \eqref{e4.16}   we use the condition ${\rm (ST^{\infty}_{p, 2, m})}$  in place
of ${\rm (ST^{q}_{p, 2, m})_{R_0}}$, and the similar argument as above shows
\begin{eqnarray}\label{e4.19}
\sum_{\ell=0}^{\infty}   \sup_{x\in X}\big\{ V(x,\rho_{\ell})^{{1\over p}-{1\over
2}}\big\|\big(\psi F^{(\ell)}\big)(t\sqrt[m]{L})P_{B(x,\rho_{\ell})}\big\|_{p\to
2} \big\}  &\le& C\|F\|_{W^{\alpha}_{\infty}}.
\end{eqnarray}.

\medskip

\noindent
{{\em Estimate of   $I\!I$}}.  \
Repeat an argument  leading up to  \eqref{e4.15}, it is easy to see that
\begin{eqnarray*}
 \|\sum_{i,j:\, {d}(x_i,x_j)< 2\rho_{\ell}} P_{\widetilde B_i}\big((1-\psi)
F^{(\ell)}\big) (t\sqrt[m]{L}) P_{\widetilde
B_j}f\|_{r}
&\leq&  C  \|\big((1-\psi) F^{(\ell)}\big)(t\sqrt[m]{L})P_{B(x,\rho_{\ell})}\|_{r\to r}   \|f\|_{r}\\
&\leq & C \|\big((1-\psi) F^{(\ell)}\big)(t\sqrt[m]{L}) \|_{r\to r}   \|f\|_{r},
\end{eqnarray*}
where, for a fixed $N$,  one has the uniform estimates
 \begin{eqnarray*}
\Big|\Big({d\over d\lambda}\Big)^{\kappa} \big((1-\psi) F^{(\ell)}\big)(\lambda)\Big|\leq C_{\kappa}2^{-\ell N}
 (1+|\lambda|)^{-N}\|F\|_{W^{\alpha}_q}.
\end{eqnarray*}
But  (a) of Lemma~\ref{le2.4} then implies that for every $r\in (p, 2)$,
 \begin{eqnarray*}
  \|\big((1-\psi) F^{(\ell)}\big)(t\sqrt[m]{L}) \|_{r\to r} \leq C  2^{-\ell N}\|F\|_{W^{\alpha}_q}.
\end{eqnarray*}
 Therefore,
  \begin{eqnarray}\label{e4.20}
\sum_{\ell=0}^{\infty} \|\big((1-\psi) F^{(\ell)}\big)(t\sqrt[m]{L}) \|_{r\to r} \leq C \|F\|_{W^{\alpha}_q}.
\end{eqnarray}

\medskip

\noindent
{{\em Estimate of  $I\!I\!I$}}.  \ Note that
\begin{eqnarray*}
\big\|\sum_{i,j:\, {d}(x_i,x_j)\geq 2^{\ell(1+\varepsilon)} t}
P_{\widetilde B_i}F^{(\ell)}(t\sqrt[m]{L}) P_{\widetilde
B_j} f \big\|_r^r&=& \sum_{i} \big\|\sum_{j:\, {d}(x_i,x_j)\geq 2^{\ell(1+\varepsilon)} t}
P_{\widetilde B_i}F^{(\ell)}(t\sqrt[m]{L}) P_{\widetilde
B_j} f \big\|_r^r\\
&\leq& \sum_{i}\Big( \sum_{j:\, {d}(x_i,x_j)\geq 2^{\ell(1+\varepsilon)} t} \big\|
P_{\widetilde B_i}F^{(\ell)}(t\sqrt[m]{L}) P_{\widetilde
B_j} f \big\|_r \Big)^r.
\end{eqnarray*}
To go further,   we need the following lemma.

  \bigskip

 \begin{lemma} \label{le4.6}
 Suppose that assumptions of Theorem~\ref{th4.5} are fulfilled.  Let  $r\in (p, 2)$. For
all  $\ell=0, 1, 2, \ldots$  and  all $x_i, x_j$ with  ${d}(x_i,x_j)\geq 2^{\ell(1+\varepsilon)} t$,   there exist
some positive constants $C, c_1, c_2>0$ such that
\begin{eqnarray*}
 \big\|
P_{\widetilde B_i}F^{(\ell)}(t\sqrt[m]{L}) P_{\widetilde
B_j} f\big\|_r\leq
C   e^{-c_1 2^{ \varepsilon \ell m\over m-1}  }
\exp\Big(- c_2\Big({d(x_i, x_j)\over  2^{\ell} t}\Big)^{m\over m-1}\Big) \|F\|_{q} \|P_{\widetilde B_j} f\|_r.
\end{eqnarray*}
\end{lemma}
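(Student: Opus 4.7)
The plan is to exploit the Fourier representation of $F^{(\ell)}$ together with the complex-time extension of generalized Gaussian estimates from Lemma~\ref{le2.1}. From $F^{(\ell)}(\sqrt[m]\lambda)=G^{(\ell)}(\lambda)e^{-\lambda}$ and the definition \eqref{e4.9}, substituting $\lambda=t^{m}L$ yields
\[
F^{(\ell)}(t\sqrt[m]{L}) \;=\; \frac{1}{2\pi}\int_{\mathbb{R}}\phi(2^{-\ell}\tau)\,\hat{G}(\tau)\,e^{(i\tau-1)t^{m}L}\,d\tau
\]
for $\ell \ge 1$ (with $\phi_0$ replacing $\phi(2^{-\ell}\cdot)$ when $\ell=0$). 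By Minkowski's inequality it suffices to control $\|P_{\widetilde B_i}e^{(i\tau-1)t^{m}L}P_{\widetilde B_j}f\|_r$ uniformly for $|\tau|\sim 2^{\ell}$ and then integrate the scalar weight $|\phi(2^{-\ell}\tau)\hat G(\tau)|$.

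For each such $\tau$, set $z=(1-i\tau)t^{m}$, so $\mathrm{Re}\,z = t^{m}$ and $r_z = t\sqrt{1+\tau^{2}} \sim 2^{\ell}t$. I would apply Lemma~\ref{le2.1}(i) at scale $r=\rho_{\ell}=2^{\ell(1+\eps)}t$ (large enough that $\widetilde B_i,\widetilde B_j$ sit in $B(x_i,\rho_\ell),B(x_j,\rho_\ell)$) to obtain
\[
\|P_{B(x_i,\rho_{\ell})}e^{-zL}P_{B(x_j,\rho_{\ell})}\|_{p\to 2} \le C\, V(x_i,\rho_{\ell})^{-({1\over p}-{1\over 2})} 2^{\ell(1+\eps)n({1\over p}-{1\over 2})} \exp\bigl(-c(d(x_i,x_j)/r_z)^{m/(m-1)}\bigr),
\]
where the polynomial $2^{\ell(1+\eps)n(1/p-1/2)}$ combines $(1+\rho_\ell/r_z)^{n(1/p-1/2)}\sim 2^{\ell\eps n(1/p-1/2)}$ and $(|z|/\mathrm{Re}\,z)^{n(1/p-1/2)}\sim 2^{\ell n(1/p-1/2)}$. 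Since $d(x_i,x_j)\ge 2^{\ell(1+\eps)}t$ while $r_z\lesssim 2^{\ell}t$, one has $d/r_z \gtrsim 2^{\eps\ell}$, and the Gaussian splits as
\[
\exp\bigl(-c(d/r_z)^{m/(m-1)}\bigr) \le e^{-c_1 2^{\eps\ell m/(m-1)}}\exp\bigl(-c_2(d(x_i,x_j)/(2^{\ell}t))^{m/(m-1)}\bigr),
\]
with the first factor absorbing every polynomial growth in $2^{\ell}$.

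To transfer from $L^{p}\to L^{2}$ to $L^{r}\to L^{r}$, apply Hölder's inequality twice (using $p<r<2$):
\[
\|P_{\widetilde B_i}T P_{\widetilde B_j}f\|_r \le V(\widetilde B_i)^{{1\over r}-{1\over 2}}\,\|P_{B(x_i,\rho_\ell)}T P_{B(x_j,\rho_\ell)}\|_{p\to 2}\, V(\widetilde B_j)^{{1\over p}-{1\over r}}\,\|P_{\widetilde B_j}f\|_r.
\]
Since $V(\widetilde B_i)\sim V(x_i,\rho_\ell)$ and, by doubling, $V(\widetilde B_j)/V(x_i,\rho_\ell) \lesssim (1+d(x_i,x_j)/\rho_\ell)^n$, the combined volume prefactor is polynomial in $d(x_i,x_j)/\rho_\ell$ and thus absorbed by the Gaussian in $d/(2^\ell t)$. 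Integrating in $\tau$, the support of $\phi(2^{-\ell}\cdot)$ has length $\sim 2^{\ell}$ and $\|\hat G\|_\infty \le \|G\|_1 \lesssim \|F\|_q$ (because $F$ is supported in $[1/4,4]$, so $G$ lives on a fixed compact interval), so one loses another factor of $2^\ell$, again killed by $e^{-c_1 2^{\eps\ell m/(m-1)}}$ at the cost of slightly diminishing $c_1$. The case $\ell=0$ is easier: $\phi_0(\tau)\hat G(\tau)$ is Schwartz, producing no $\ell$-growth.

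The main delicate point I anticipate is the bookkeeping of the volume factors: the centres $x_i$ and $x_j$ may be far apart, so $V(x_i,\rho_\ell)$ and $V(x_j,\rho_\ell)$ are not comparable, but the doubling condition \eqref{e1.3} controls their ratio by a fixed power of $1+d(x_i,x_j)/\rho_\ell$, which is comfortably dominated by the Gaussian $\exp(-c_2(d(x_i,x_j)/(2^\ell t))^{m/(m-1)})$ in the final bound. Every other factor in the argument is either a harmless constant or a polynomial in $2^{\ell}$ and $d(x_i,x_j)/(2^{\ell}t)$, both absorbable by the two exponential factors produced in Step 2.
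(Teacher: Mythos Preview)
Your overall strategy is exactly that of the paper: write $F^{(\ell)}(t\sqrt[m]{L})$ via the Fourier integral, apply the complex-time estimates of Lemma~\ref{le2.1} to $e^{(i\tau-1)t^mL}$, split the Gaussian using $d(x_i,x_j)\ge 2^{\ell(1+\varepsilon)}t$, and absorb all polynomial losses (in $2^\ell$ and in $d/\rho_\ell$) into the exponential factors. The integration in $\tau$ and the use of $\|\hat G\|_\infty\lesssim\|F\|_q$ are also identical.

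There is one genuine gap. You invoke Lemma~\ref{le2.1}(i) for the pair $(p,2)$, but the hypothesis of that lemma is precisely ${\rm (GGE_{p,2,m})}$, i.e.\ an $L^p\to L^2$ off-diagonal estimate \emph{with Gaussian decay}. The assumptions of Theorem~\ref{th4.5} give only ${\rm (DG_m)}$ and ${\rm (G_{p,2,m})}$; the latter is an on-scale $L^p\to L^2$ bound with no off-diagonal decay, so Lemma~\ref{le2.1} is not directly available at the endpoint $p$. The paper handles this by first interpolating (as in \eqref{e2.3}) between ${\rm (DG_m)}$ and ${\rm (G_{p,2,m})}$ to obtain ${\rm (GGE_{r,2,m})}$ for the \emph{intermediate} exponent $r\in(p,2)$, and then applying Lemma~\ref{le2.1} for the pair $(r,2)$. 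This has the pleasant side effect that the single H\"older step $\|P_{\bar B(x_i,\rho_\ell/10)}\|_{2\to r}\le V(x_i,\rho_\ell/10)^{1/r-1/2}$ cancels the volume factor from Lemma~\ref{le2.1} exactly, so no mismatch between $V(x_i,\cdot)$ and $V(x_j,\cdot)$ ever arises and your doubling argument becomes unnecessary. Once you replace your $(p,2)$ application of Lemma~\ref{le2.1} by the $(r,2)$ version, the rest of your argument goes through and in fact simplifies.
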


\begin{proof}  By the formula  \eqref{e4.9},
\begin{eqnarray}
\label{e4.21}
&&\hspace{-1.5cm}
 \big\| P_{\widetilde B_i}F^{(\ell)}(t\sqrt[m]{L}) P_{\widetilde
 B_j} f\big\|_r\nonumber\\
   &\leq&  C
  \|P_{\widetilde B_j} f\|_r \int_{-\infty}^{+\infty}
 |\phi(2^{-\ell}\tau) \hat{G}(\tau)|
 \big\| P_{\widetilde B_i}e^{(i\tau-1)t^mL}  P_{\widetilde B_j}  \big\|_{r\to r}
  \;d\tau,
\end{eqnarray}
where  $G(\lambda)=F(\sqrt[m]\lambda) e^{\lambda}$. Recall that   hypothesis  ${\rm (DG_m)}$  and  ${\rm (G_{p,2,m})}$
  imply ${\rm (GGE_{r,2})}$. This, in combination with Lemma~\ref{le2.1} (with $z=(i\tau -1)t^m$), gives
\begin{eqnarray*}
&&\hspace{-1cm}
 \big\| P_{{\bar B}(x_i, {\rho_{\ell}\over 10})}e^{(i\tau-1)t^mL}  P_{{\bar B}(x_j, {\rho_{\ell}\over 10})}  \big\|_{r\to 2}\\
 &\leq& CV(x_i, {\rho_{\ell}\over 10})^{- ({1\over r}-{1\over 2})}\
 \Big(1+{\rho_\ell\over 10t\sqrt{\tau^2+1}}\Big)^{n({1\over r}-{1\over 2})} \Big(\sqrt{1+\tau^2}\Big)^{n({1\over r}-{1\over 2})}
 \exp\Big(-c \Big({d(x_i,x_j)\over t\sqrt{\tau^2+1}}\Big)^{m\over m-1}\Big),
\end{eqnarray*}
which shows
\begin{eqnarray*}
&&\hspace{-1.5cm}
 \big\| P_{{\bar B}(x_i, {\rho_{\ell}\over 10})}e^{(i\tau-1)t^mL}  P_{{\bar B}(x_j, {\rho_{\ell}\over 10})}  \big\|_{r\to r}\\
 &\leq& \big\| P_{{\bar B}(x_i, {\rho_{\ell}\over 10})}e^{(i\tau-1)t^mL}  P_{{\bar B}(x_j, {\rho_{\ell}\over 10})}
 \big\|_{r\to 2}  \big\| P_{{\bar B}(x_i, {\rho_{\ell}\over 10})}  \big\|_{2\to r}\\
 &\leq& C  \Big(1+{\rho_\ell\over 10t\sqrt{\tau^2+1}}\Big)^{n({1\over r}-{1\over 2})} \Big(\sqrt{1+\tau^2}\Big)^{n({1\over r}-{1\over 2})}
 \exp\Big(-c \Big({d(x_i, x_j)\over t\sqrt{\tau^2+1}}\Big)^{m\over m-1}\Big).
\end{eqnarray*}
Hence, if  $\tau\in [2^{\ell-2}, 2^{\ell}]$, then
\begin{eqnarray*}
  \big\| P_{\widetilde B_i}e^{(i\tau-1)t^mL}  P_{\widetilde B_j}  \big\|_{r\to r}
  &\leq& \big\| P_{{\bar B}(x_i, {\rho_{\ell}\over 10})}e^{(i\tau-1)t^mL}  P_{{\bar B}(x_j, {\rho_{\ell}\over 10})}  \big\|_{r\to r}\\
  &\leq & C 2^{\ell n(1+\varepsilon) ({1\over r }-{1\over 2}) } \exp\Big(-c\Big({d(x_i, x_j)\over  2^{\ell} t}\Big)^{m\over m-1}\Big).
\end{eqnarray*}
Substituting the above inequality  into \eqref{e4.21} and using the fact that $\|{\hat G}\|_{\infty}\leq \|F\|_{q}$ yield  that for
${d}(x_i,x_j)\geq 2^{\ell(1+\varepsilon)} t$,
\begin{eqnarray*}
 \big\|
P_{\widetilde B_i}F^{(\ell)}(t\sqrt[m]{L}) P_{\widetilde
B_j} f\big\|_r
 \leq
C 2^{\ell n(1+\varepsilon) ({1\over r}-{1\over 2}) +1} e^{-c_2 2^{\varepsilon\ell  m\over m-1}  }
\exp\Big(-c_2\Big({d(x_i, x_j)\over  2^{\ell} t}\Big)^{m\over m-1}\Big) \|F\|_{q} \|P_{\widetilde B_j} f\|_r\\
 \leq
C   e^{-c_1  2^{\varepsilon\ell  m\over m-1}  }
\exp\Big(- c_2\Big({d(x_i, x_j)\over  2^{\ell} t}\Big)^{m\over m-1}\Big) \|F\|_{q} \|P_{\widetilde B_j} f\|_r
\end{eqnarray*}
with $c_1=c/4$ and $c_2=c/2$. This proves Lemma~\ref{le4.6}.
\end{proof}

\bigskip

\noindent
 {\em Back to the proof of }{Theorem~\ref{th4.5}.}\  By \eqref{e4.12} for every $i$
\begin{eqnarray*}
 \sum_{ j:\, {d}(x_i,x_j)\geq 2^{\ell(1+\varepsilon)} t}
\exp\Big(-c_2\Big({d({x_i}, x_j)\over 2^{\ell} t}\Big)^{m\over m-1}\Big)
  &\leq & \sum_{k=1}^{\infty}  \sum_{ j:\, 2^k2^{\ell(1+\varepsilon)} t \leq  {d}(x_i,x_j)<2^{k+1} 2^{\ell(1+\varepsilon)} t}
\exp\big(-c_2 {2^{m(k+\ell \varepsilon) \over  m-1 }}\big) \\
&\leq & \sum_{k=1}^{\infty}  2^{2kn}
\exp\big(-c_2 {2^{m(k+\ell \varepsilon) \over  m-1 }}\big)
 \leq   C,
\end{eqnarray*}
which, together with Lemma~\ref{le4.6} and the Cauchy-Schwarz inequality, yields
 \begin{eqnarray*}
&&\hspace{-1.5cm}\big\|\sum_{i,j:\, {d}(x_i,x_j)\geq 2^{\ell(1+\varepsilon)} t}
P_{\widetilde B_i}F^{(\ell)}(t\sqrt[m]{L}) P_{\widetilde
B_j} f \big\|_r^r\\
&\leq & Ce^{- c_1r2^{\varepsilon\ell  m\over m-1}  } \|F\|_{q}^r
\sum_{i} \Big\{ \sum_{j:\, {d}(x_i,x_j)\geq 2^{\ell(1+\varepsilon)} t}
 \exp\Big(- c_2\Big({d(x_i, x_j)\over  2^{\ell} t}\Big)^{m\over m-1}\Big)
  \|P_{\widetilde B_j} f\|_r\Big\}^r \\
	&\leq&  Ce^{- c_1r 2^{\varepsilon\ell  m\over m-1}  } \|F\|_{q}^r
	 \sum_{ j}  \|P_{\widetilde B_j} f\|_r^r
	 \sum_{i:\, {d}(x_i,x_j)\geq 2^{\ell(1+\varepsilon)} t}
\exp\Big(-c_2\Big({d(x_i, x_j)\over 2^{\ell} t}\Big)^{m\over m-1}\Big)  \\
	&\leq&  Ce^{- c_1r 2^{\varepsilon\ell  m\over m-1}  } \|F\|_{q}^r  \| f\|_r^r.
\end{eqnarray*}
 Therefore,
 \begin{eqnarray}\label{e4.22}
 \sum_{\ell=0}^{\infty}    \big\|  \sum_{i,j:\, {d}(x_i,x_j)\geq 2^{\ell(1+\varepsilon)} t}
P_{\widetilde B_i}F^{(\ell)}(t\sqrt[m]{L}) P_{\widetilde
B_j} f\big\|_r &\leq& C \sum_{\ell=0}^{\infty} e^{- c_1  2^{\varepsilon\ell m\over m-1}  } \|F\|_{ q} \|f\|_r\nonumber\\
&\leq& C  \|F\|_{ q} \|f\|_r.
\end{eqnarray}
Estimates \eqref{e4.6} and  \eqref{e4.7} then follow  from  \eqref{e4.11},  \eqref{e4.14},
\eqref{e4.15},   \eqref{e4.18}, \eqref{e4.19}, \eqref{e4.20}  and \eqref{e4.22}.
This completes the proof of Theorem~\ref{th4.5}. \hfill{} $\Box$




\medskip

As we explained in the introduction, a standard application of spectral multiplier theorems is Bochner-Riesz means.
Such application is also a good test to check if the considered multiplier result is sharp.
Let us recall that Bochner-Riesz means of order $\delta$ for a non-negative self-adjoint operator $L$
are defined by the formula
\begin{equation}\label{e4.1}
S_R^{\delta} (L)  = \left(I-{L\over R^m}\right)_+^{\delta},\ \ \ \ R>0.
\end{equation}
\noindent
The case  ${\delta}=0$ corresponds to the spectral projector $E_{\SL}[0, R]$. For
${\delta}>0$ we think of (\ref{e4.1}) as a smoothed version of this
spectral projector; the larger ${\delta}$, the more smoothing.
Bochner-Riesz summability  on $L^p$ describes the range of ${\delta}$
for which $S_R^{\delta} (L)$ are bounded on $L^p$,  uniformly in $R$.

\begin{coro}\label{coro4.3}
Suppose that  the
 operator $L$ satisfies
Davies-Gaffney estimates ${\rm (DG_m)}$  and condition  ${\rm (ST^q_{p, 2, m})}$ with some $1\leq p<2$ and $1\leq q\leq \infty$.

 Then
\begin{eqnarray}\label {e4.2}
\sup_{R>0}\Big\|\Big(I-{L\over R^m}\Big)_+^{\delta}\Big\|_{r\to r}\leq C
\end{eqnarray}
for all  $p<r\leq 2$ and ${\delta}> n(1/r-1/2)-1/q$.
\end{coro}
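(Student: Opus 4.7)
The plan is to apply Theorem~\ref{th4.5} to the Bochner-Riesz multiplier
$F_\delta(\lambda)=(1-\lambda^m)_+^{\delta}$ after splitting it into a smooth
and a singular piece, and then to sharpen the resulting range of $\delta$ via
Stein's complex interpolation. First fix a cutoff $\eta\in C_c^\infty((1/2,2))$
with $\eta\equiv 1$ on $[3/4,4/3]$ and decompose $F_\delta=G+H$, where
$G=(1-\eta)F_\delta$ belongs to $C_c^\infty([0,3/4])$ (since $F_\delta$ vanishes
beyond $\lambda=1$ and $1-\eta\equiv 0$ on $[3/4,4/3]$) and $H=\eta F_\delta$ is
supported in $[1/2,1]$ and behaves like $(1-\lambda)^\delta$ near $\lambda=1$.
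Because $G$ is smooth and compactly supported, Lemma~\ref{le2.4}(a) immediately
yields $\sup_{R>0}\|G(\sqrt[m]{L}/R)\|_{r\to r}<\infty$ for every
$r\in(p_0,p_0')$, so $G$ is negligible for the rest of the argument.

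For the singular piece, a standard one-dimensional scaling computation shows
that $\|H\|_{W^\alpha_q(\mathbb R)}<\infty$ exactly when $\alpha<\delta+1/q$.
Since $\supp H\subset[1/4,4]$, Theorem~\ref{th4.5} with $R_0=0$ applied to $H$
gives $\sup_{R>0}\|H(\sqrt[m]{L}/R)\|_{r\to r}\le C\|H\|_{W^\alpha_q}$ for every
$r\in(p,2]$, provided one can choose $\alpha\in(n(1/p-1/2),\delta+1/q)$, i.e.,
whenever $\delta>n(1/p-1/2)-1/q$. Combined with the bound on $G$, this already
establishes the corollary in the uniform-in-$r$ range
$\delta>n(1/p-1/2)-1/q$. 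To reach the finer $r$-dependent range
$\delta>n(1/r-1/2)-1/q$, introduce the analytic family
$T_z:=e^{z^2}(I-L/R^m)_+^z$: on $\{\Re z=0\}$ spectral calculus gives
$\|T_z\|_{2\to 2}\le e^{-(\Im z)^2}$, while the argument above applied to
$(I-L/R^m)_+^{\sigma+i\tau}$---whose $W^\alpha_q$-norm grows only polynomially
in $|\tau|$---yields $\|T_z\|_{r_0\to r_0}\le C(1+|\Im z|)^N e^{\sigma^2-(\Im z)^2}$
on $\{\Re z=\sigma\}$ for any $\sigma>n(1/p-1/2)-1/q$ and any $r_0\in(p,2]$.
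Applying Stein's interpolation theorem on the strip and then optimizing
$r_0\to p^+$ and $\sigma$ toward its critical value delivers the bound on
$L^r$ for the claimed range of $\delta$.

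The main obstacle is the Stein-interpolation step: one must track the
polynomial growth in $|\tau|$ of $\|(1-\lambda^m)_+^{\sigma+i\tau}\|_{W^\alpha_q}$
and verify that the Gaussian factor $e^{z^2}$ absorbs it uniformly in $R$, so
that Stein's hypotheses hold on the full vertical strip. The scaling
identification $\|H\|_{W^\alpha_q}<\infty\Leftrightarrow\alpha<\delta+1/q$
for functions with a power-type singularity is routine but must be kept
uniform along the analytic family to line up with the $L^2$ and $L^{r_0}$
boundary data.
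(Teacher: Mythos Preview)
Your decomposition $F_\delta=G+H$ and the handling of the smooth piece $G$ via Lemma~\ref{le2.4}(a) match the paper's proof. For the singular piece the paper is more direct: it records that $(1-\lambda^m)_+^\delta\in W^\alpha_q$ if and only if $\delta>\alpha-1/q$, and then simply invokes Theorem~\ref{th4.5} to obtain the $L^r$ bound, with no complex-interpolation step at all.

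Your added Stein-interpolation argument, however, does not reach the stated threshold $\delta>n(1/r-1/2)-1/q$. On the strip between $\{\Re z=0\}$ (where you use the trivial $L^2$ bound) and $\{\Re z=\sigma\}$ with $\sigma>n(1/p-1/2)-1/q$ (where you use the $L^{r_0}$ bound from Theorem~\ref{th4.5}), Stein interpolation yields $L^r$ boundedness only for
\[
\delta=\sigma\,\frac{1/r-1/2}{1/r_0-1/2},\qquad r_0\le r\le 2.
\]
Sending $\sigma\downarrow n(1/p-1/2)-1/q$ and $r_0\downarrow p$ gives at best
\[
\delta>n\Big(\frac1r-\frac12\Big)-\frac1q\cdot\frac{1/r-1/2}{1/p-1/2},
\]
which is strictly larger than $n(1/r-1/2)-1/q$ whenever $r>p$ and $q<\infty$. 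The obstruction is structural: your $L^2$ anchor sits at $\delta=0$, while the target line $\delta=n(1/r-1/2)-1/q$ passes through $\delta=-1/q<0$ at $r=2$; a linear interpolant anchored at $(0,L^2)$ cannot dip below that. So the complex-interpolation step, as you have set it up, does not deliver the $r$-dependent range you claim, and the ``optimizing $r_0\to p^+$ and $\sigma$ toward its critical value'' in your last sentence does not in fact land on the desired threshold.
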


 \begin{proof} Let $F(\lambda) = (1-\l^m)^{\delta}_+$.  We set
$$
 F(\lambda) =F(\lambda)  \phi(\lambda^m) + F(\lambda)(1-\phi(\lambda^m))=:F_1(\lambda^m) +F_2(\lambda^m),
$$
 where   $\phi\in C^{\infty}(\R)$ is supported in  $ \{ \xi:  |\xi| \geq 1/4 \}$
 and $\phi =1$ for all $|\xi|\geq 1/2$.
Observe  that   $F\in W^{\alpha}_q$ if
and only if ${\delta}>\alpha-1/q$. This, in combination with   Theorem~\ref{th4.5},
shows that
  for all  $p<r\leq 2$ and ${\delta}> n(1/r-1/2)-1/q$,      $ \sup_{R>0} \|F_1(L/R^m)\|_{r\to r}\leq C.$
 On the other hand, it follows from Lemma~\ref{le2.4} that  $\|F_2(L/R^m)\|_{r\to r}\leq C $
 uniformly in $R>0 $. This completes the proof of estimate \eqref{e4.2}.
 \end{proof}

\medskip
\subsection{ Operators with non-empty point spectrum}

It is not difficult to see that   condition   ${\rm (ST^q_{p, 2, m})}$  with  some   $ q< \infty$ implies that
  the set of point spectrum of $L$ is empty.   Indeed, one has for all  $0\leq a< R$ and $x\in X,$
$$
\big\|1\!\!1_{\{a \}  }(\sqrt[m]{L})P_{B(x,r)}\big\|_{p\to 2}
\leq C V(x,r)^{{1\over 2}-{1\over p}} (rR)^{n({1\over p}-{1\over 2})}\big\|1\!\!1_{\{a \} }(R\cdot)\big\|_{q} =0, \ \ \ Rr\ge 1
$$
and therefore $1\!\!1_{\{a\} }(\sqrt[m]{L})=0$ so the point spectrum of $L$
is empty, see \cite{DOS}.
In particular, ${\rm (ST^q_{p, 2, m})}$ cannot hold for any $q<\infty$ for elliptic
operators on compact manifolds or for the harmonic oscillator.
To be able to  study these operators as well,  we introduce a
variation of  condition ${\rm (ST^q_{p, 2, m})}$. Following  \cite{CowS, DOS}, for an even Borel function $F$
with  $\supp F\subseteq [-1, 1]$ we define
the norm $\|F\|_{N,q}$ by
$$
\|F\|_{N,q}=\left({1\over 2N}\sum_{\ell=1-N}^{N} \sup_{\lambda\in
 [{\ell-1\over N}, {\ell\over N})} |F(\lambda)|^q\right)^{1/q},
 $$
 where $q\in [1, \infty)$ and $N\in \NN$. For $q=\infty$, we put
 $\|F\|_{N, \infty}=\|F\|_{{\infty}}$.
 It is obvious that $\|F\|_{N,q}$ increases monotonically in $q$.

     Consider a non-negative self-adjoint operator $L$  and numbers
$p$ and $q$ such that $1\leq p< 2$ and $1\leq
q\leq\infty$.   Following \cite{COSY}, we shall  say  that   $L$
  satisfies the  {\it Sogge spectral cluster condition}  if: for  a fixed natural number $\kappa$ and
 for all $N\in \NN$ and  all even
  Borel functions $F$  such that\, $\supp F\subseteq [-N, N]$,
$$
\big\|F(\SL)P_{B(x, r)} \big\|_{p\to 2} \leq
 CV(x,r)^{{1\over 2}-{1\over p}}(Nr)^{n({1\over p}-{1\over 2})}\|\delta_NF\|_{N^\kappa,\, q}
\leqno{\rm (SC^{q, \kappa}_{p, 2, m})}
$$
for  all $x\in X$ and $r\geq 1/N$.  For $q=\infty$, ${\rm (SC^{\infty,\kappa}_{p, 2, m})}$ is independent of $\kappa$  so we write
it as ${\rm (SC^{\infty}_{p, 2, m})}$.

\medskip

\begin{remark}\label{re4.7}\, It is easy to check that for $\kappa\geq 1$, ${\rm (SC^{q, \kappa}_{p, 2, m})}$ implies
${\rm (SC^{q,1}_{p, 2, m})}$. Moreover, if  $\mu(X)<\infty$, then
 conditions ${\rm (SC^{\infty}_{p, 2, m})}$  and ${\rm (ST^{\infty}_{p, 2, m})}$
are  equivalent(see  Proposition 3.11, \cite{COSY}).
\end{remark}

The next theorem
 is a version of Theorem~\ref{th4.5} suitable for the operators satisfying
 condition ${\rm (SC^{q, \kappa}_{p, 2, m})}$.

\begin{theorem}\label{th4.11} Suppose the operator $L$ satisfies
Davies-Gaffney estimates ${\rm (DG_m)}$,
 conditions   ${\rm (G_{p,2,m})}$
and condition  ${\rm (SC^{q, \kappa}_{p, 2, m})}$ for a fixed
$\kappa \in {\mathbb N}$ and some $p, q$ such that $1\leq p<2
$ and $1\leq q\leq \infty$. In addition, we assume that for
any $\varepsilon>0$ there exists a constant $C_\varepsilon$ such
that for all $N\in {\mathbb N}$ and all even Borel functions $F$
such that supp $F\subset [-N,N]$,
$$
\|F(\SL)\|_{p\to p}\leq C_\varepsilon N^{\kappa n({1\over p}-{1\over
2})+\varepsilon}\|\delta_N F\|_{N^{\kappa},q}. \leqno{\rm (AB_{p, m})}
$$
Let $p<r\leq 2$. Then  for any  function $F$ such that $\supp F \subseteq [1/4, 4]$ and $\|F\|_{W^{\alpha}_q}<\infty$
 for some $\alpha>\max\{n(1/p-1/2),1/q\}$, the operator $F(t \sqrt[m]{L})$ is bounded on $L^p(X)$ for all $t>0.$  In addition,
 \begin{equation}
\label{e4.23} \sup_{t>0}\|F(t \sqrt[m]{L})\|_{r\to r} \leq
C\|F\|_{W^{\alpha}_q}.
\end{equation}
\end{theorem}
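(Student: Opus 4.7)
The plan is to imitate the proof of Theorem~\ref{th4.5} nearly verbatim, replacing each appeal to the restriction condition ${\rm (ST^{q}_{p,2,m})_{R_0}}$ by an appeal to the Sogge spectral cluster condition ${\rm (SC^{q,\kappa}_{p,2,m})}$, and using the new hypothesis ${\rm (AB_{p,m})}$ together with interpolation to reach the range $p<r\leq 2$. Concretely, I would again write $F(\sqrt[m]{\lambda})=\sum_{\ell\ge 0}F^{(\ell)}(\sqrt[m]{\lambda})$ via the Fourier-side dyadic decomposition \eqref{e4.8}--\eqref{e4.10}, choose $\varepsilon>0$ with $2n\varepsilon({1/p}-{1/2})\le \alpha-n({1/p}-{1/2})$, build the grid $\{x_i\}$ at scale $\rho_\ell=2^{\ell(1+\varepsilon)}t$, form the balls $B_i,\widetilde B_i$, and decompose
\[
F^{(\ell)}(t\sqrt[m]{L})f=I_\ell+I\!I_\ell+I\!I\!I_\ell
\]
exactly as in \eqref{e4.14} with the same cutoff $\psi\in C_c^\infty(1/16,16)$.

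The estimates of $I\!I_\ell$ and $I\!I\!I_\ell$ carry over directly. For $I\!I_\ell$, the function $(1-\psi)F^{(\ell)}$ has rapidly decaying derivatives ($\|\big((1-\psi)F^{(\ell)}\big)^{(\kappa)}\|_\infty\lesssim 2^{-\ell N}\|F\|_{W^\alpha_q}(1+|\lambda|)^{-N}$), so Lemma~\ref{le2.4}(a) yields $\|I\!I_\ell\|_{r\to r}\le C 2^{-\ell N}\|F\|_{W^\alpha_q}\|f\|_r$ and the series converges. For $I\!I\!I_\ell$, Lemma~\ref{le4.6} depends only on ${\rm (DG_m)}$ and ${\rm (G_{p,2,m})}$ via Lemma~\ref{le2.1}, so it applies without change, and the summation argument used after \eqref{e4.21} produces the bound $C\|F\|_q\|f\|_r\sum_\ell e^{-c2^{\varepsilon\ell m/(m-1)}}<\infty$.

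The only place a genuine modification is needed is in the control of $I_\ell$, where in Theorem~\ref{th4.5} one applied ${\rm (ST^{q}_{p,2,m})_{R_0}}$ to the pieces $(\chi_{[2^{k-4},2^{k-3})}\psi F^{(\ell)})(t\sqrt[m]{L})$. Here I would instead apply ${\rm (SC^{q,\kappa}_{p,2,m})}$ with $N\approx 2^{k-3}/t$ to each such dyadic piece, obtaining
\[
\big\|(\psi F^{(\ell)})(t\sqrt[m]{L})P_{B(x,\rho_\ell)}\big\|_{p\to 2}
\le CV(x,\rho_\ell)^{{1\over 2}-{1\over p}}2^{\ell(1+\varepsilon)n({1\over p}-{1\over 2})}\,\|\delta_N(\psi F^{(\ell)})(t\cdot)\|_{N^\kappa,q}.
\]
The main technical step is then to show
\[
\|\delta_N(\psi F^{(\ell)})(t\cdot)\|_{N^\kappa,q}\le C\,2^{-\ell\eta}\|F\|_{W^\alpha_q}
\]
for some $\eta>0$ large enough to beat the volume factor $2^{\ell(1+\varepsilon)n(1/p-1/2)}$. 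Since $F^{(\ell)}$ is smooth on scale $2^{-\ell}$, by a Bernstein/Sobolev embedding argument (using $\alpha>1/q$ via the chain $W^\alpha_q\hookrightarrow B^{\alpha}_{q,\infty}\hookrightarrow B^{\alpha-1/q}_{\infty,\infty}\hookrightarrow\Lambda_{\min\{\alpha-1/q,1/2\}}$ already invoked in the proof of Theorem~\ref{th3.3}) the discrete $\|\cdot\|_{N^\kappa,q}$ norm can be replaced by an integral $L^q$ norm up to negligible factors, which then plugs into the same Besov-space bookkeeping \eqref{e4.17}--\eqref{e4.18} to give a summable series in $\ell$.

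This yields the bound $\sup_{t>0}\|F(t\sqrt[m]{L})\|_{p\to p}\le C\|F\|_{W^\alpha_q}$; equivalently I would first prove the bound on $L^p$ by using ${\rm (AB_{p,m})}$ (with its $N^{\varepsilon}$ slack absorbed into the dyadic sum), then interpolate with the trivial $L^2$ bound $\|F(t\sqrt[m]{L})\|_{2\to 2}\le \|F\|_\infty\le C\|F\|_{W^\alpha_q}$ to obtain \eqref{e4.23} for all $p<r\le 2$. The main obstacle I expect is precisely the comparison of the Sogge norm $\|\cdot\|_{N^\kappa,q}$ with a Sobolev norm of $F^{(\ell)}$: one must verify that the $\varepsilon$-loss in ${\rm (AB_{p,m})}$, the polynomial loss $N^{\kappa n(1/p-1/2)}$, and the volume factor $2^{\ell(1+\varepsilon)n(1/p-1/2)}$ are all swallowed by the regularity of $F$ (encoded through the Besov embeddings and the Fourier localization on $|\tau|\sim 2^\ell$), so that the sum in $\ell$ converges for $\alpha>\max\{n(1/p-1/2),1/q\}$.
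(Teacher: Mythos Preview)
Your overall architecture is right, but you are missing the one genuinely new idea in the proof, and the ``main obstacle'' you flag is in fact a real gap rather than a routine technicality. The paper does \emph{not} apply ${\rm (SC^{q,\kappa}_{p,2,m})}$ directly to the pieces $\psi F^{(\ell)}(t\cdot)$. Instead it first splits into the regimes $t\ge 1/4$ (handled immediately by ${\rm (AB_{p,m})}$ since $\supp\delta_tF\subset[0,16]$ and $\|\delta_{16}(F(t\cdot))\|_{16^\kappa,q}\le C\|F\|_\infty$) and $t\le 1/4$. In the latter regime it sets $N=8[t^{-1}]+1$, chooses an even $\xi\in C_c^\infty(-1/2,1/2)$ with $\hat\xi(0)=1$ and $\hat\xi^{(k)}(0)=0$ for $1\le k\le[\alpha]+2$, and writes
\[
F(t\sqrt[m]{L})=(\delta_tF-\xi_{N^{\kappa-1}}*\delta_tF)(\sqrt[m]{L})
+(\xi_{N^{\kappa-1}}*\delta_tF)(\sqrt[m]{L}).
\]
The first term is where ${\rm (AB_{p,m})}$ is actually used: Lemma~\ref{le4.12} gives
$\|\delta_N(\delta_tF-\xi_{N^{\kappa-1}}*\delta_tF)\|_{N^\kappa,q}\le CN^{-\kappa\alpha}\|F\|_{W^\alpha_q}$,
and this $N^{-\kappa\alpha}$ exactly kills the $N^{\kappa n(1/p-1/2)+\varepsilon}$ in ${\rm (AB_{p,m})}$ because $\alpha>n(1/p-1/2)$. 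Only the mollified second term is then run through the $F^{(\ell)}$/grid decomposition \`a la Theorem~\ref{th4.5}; the convolution with $\xi_{N^{\kappa-1}}$ is precisely what makes $\|\cdot\|_{N^\kappa,q}$ comparable to $\|F^{(\ell)}\|_q$ when ${\rm (SC^{q,\kappa}_{p,2,m})}$ is applied to $\mathscr F_1^{(\ell)}$ (this is the content of the appeal to \cite[Theorem~3.6]{COSY}).

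Your direct route---bounding $\|\delta_N(\psi F^{(\ell)})(t\cdot)\|_{N^\kappa,q}$ by Bernstein/Sobolev embedding---does not give the sharp threshold. A Bernstein argument yields at best $\|\delta_N(\psi F^{(\ell)})(t\cdot)\|_{N^\kappa,q}\le\|G^{(\ell)}\|_\infty\le C\,2^{\ell/q}\|G^{(\ell)}\|_q$, so after multiplying by the volume factor $2^{\ell(1+\varepsilon)n(1/p-1/2)}$ the Besov bookkeeping \eqref{e4.17} would require $\alpha>n(1/p-1/2)+1/q$, not $\alpha>\max\{n(1/p-1/2),1/q\}$. Likewise, invoking ${\rm (AB_{p,m})}$ globally on $F(t\cdot)$ and then interpolating cannot work: the prefactor $N^{\kappa n(1/p-1/2)+\varepsilon}$ blows up as $t\to 0$, and nothing in $\|\delta_{Nt}F\|_{N^\kappa,q}$ compensates for it without first subtracting the mollified part. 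The mollification/Lemma~\ref{le4.12} step is the mechanism that converts the a~priori bound ${\rm (AB_{p,m})}$ into something uniform in $t$, and it is the piece your proposal needs to add.
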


\medskip

Note that condition ${\rm (SC^{q, \kappa}_{p, 2, m})}$ is weaker than ${\rm (ST^q_{p, 2, m})}$ and we need  a priori estimate
  ${\rm (AB_{p, m})}$  in Theorem~\ref{th4.11}.
See also \cite[Theorem 3.6]{CowS} and \cite[Theorem 3.2]{DOS} for related results. Once
  ${\rm (SC^{q, \kappa}_{p, 2, m})}$ is proved, a priori estimate
  ${\rm (AB_{p, m})}$  is not difficult to check in general.

\begin{proposition}\label{prop4.9} Suppose that $\mu(X)<\infty$ and ${\rm (SC^{q,1}_{p, 2, m})}$  for some $p, q$ such that $1\leq p<2
$ and $1\leq q\leq \infty$.
Then
\begin{eqnarray*}
\|F(\SL)\|_{p\to p} \leq  C  N^{n({1\over p}-{1\over 2}) }\|\delta_N
F\|_{N,q}
\end{eqnarray*}
for all $N\in \NN$ and all Borel  functions $F$ such that supp $F\subseteq [-N, N]$.
\end{proposition}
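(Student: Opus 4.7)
The strategy is to reduce the $L^p\to L^p$ bound to an $L^p\to L^2$ bound that is essentially handed to us by the spectral cluster hypothesis. Since $\mu(X)<\infty$ and $p\le 2$, H\"older's inequality gives $\|g\|_p\le\mu(X)^{1/p-1/2}\|g\|_2$ for every $g\in L^2(X)$, hence
$$
\|F(\SL)\|_{p\to p}\ \le\ \mu(X)^{1/p-1/2}\,\|F(\SL)\|_{p\to 2}.
$$
So it suffices to establish $\|F(\SL)\|_{p\to 2}\le C\,\mu(X)^{1/2-1/p}\,N^{n(1/p-1/2)}\|\delta_N F\|_{N,q}$, which is the genuinely analytic ingredient of the proof.

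For this I would invoke ${\rm (SC^{q,1}_{p,2,m})}$ on a single ball large enough to exhaust the whole space. In all the settings in which this proposition is applied (compact manifolds, finite graphs, Cayley graphs of finite groups, etc.) the assumption $\mu(X)<\infty$ together with doubling forces $X$ to be bounded; let $D=\mathrm{diam}(X)<\infty$. Fix any $x_0\in X$ and set $r_N:=\max(D,1/N)$, so that $r_N\ge 1/N$, $B(x_0,r_N)=X$ up to null sets, and $V(x_0,r_N)=\mu(X)$. Since $P_{B(x_0,r_N)}=I$, the spectral cluster hypothesis yields
$$
\|F(\SL)\|_{p\to 2}\ =\ \|F(\SL)P_{B(x_0,r_N)}\|_{p\to 2}\ \le\ C\,\mu(X)^{1/2-1/p}\,(Nr_N)^{n(1/p-1/2)}\|\delta_N F\|_{N,q}.
$$
Because $N\in\NN$ and $Nr_N\le\max(ND,1)\le(1+D)N$, the factor $(Nr_N)^{n(1/p-1/2)}$ is controlled by a constant multiple of $N^{n(1/p-1/2)}$; substituting back into the first inequality gives the claimed bound.

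The only genuine obstruction is the transition from the localized estimate ${\rm (SC^{q,1}_{p,2,m})}$ to a global $L^p\to L^2$ bound. When $X$ is bounded this is essentially trivial, as a single ball does the job. If one truly needed to handle $\mu(X)<\infty$ together with unbounded $X$, one would replace the single-ball argument by a disjoint decomposition $X=\bigsqcup_j \widetilde B_j$ with $\widetilde B_j\subset B(x_j,1/N)$, exactly as in the proof of Theorem~\ref{th4.5}: apply ${\rm (SC^{q,1}_{p,2,m})}$ with $r=1/N$ (so $Nr=1$) to each $F(\SL)P_{\widetilde B_j}$, use that $V(x_j,1/N)\le\mu(X)$ uniformly in $j$, and reassemble via the embedding $\ell^p\hookrightarrow\ell^2$ on the index set $\{j\}$. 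For the finite-diameter situation actually at hand, however, the short single-ball route above is the intended and natural argument.
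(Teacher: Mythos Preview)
Your argument is correct and follows essentially the same route as the paper: H\"older's inequality to pass from $L^p\to L^p$ to $L^p\to L^2$, then a single application of ${\rm (SC^{q,1}_{p,2,m})}$ on a ball covering all of $X$ (the paper simply normalizes to $X=B(x_0,1)$ and takes $r=1$, whereas you work with $r_N=\max(D,1/N)$, which amounts to the same thing). Your extra care about whether $\mu(X)<\infty$ forces bounded diameter, and the sketched $\ell^p\hookrightarrow\ell^2$ patching argument for the hypothetical unbounded case, go beyond what the paper records but do not change the substance of the proof.
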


\begin{proof}   We follow  Proposition 3.7 of \cite{DOS}
to  prove  the  result (see also \cite{DOS}). Since  $\mu(X)<\infty$, we may assume that $X=B(x_0, 1)$ for some $x_0\in X$.
It  follows from H\"older's inequality and condition ${\rm (SC^{q,1}_{p, 2})}$
 that
\begin{eqnarray*}
\|F(\SL)\|_{p\to p}&\leq& V(X)^{{1\over p}-{1\over 2}}
\|F(\SL)P_{B(x_0, 1)}\|_{p\to 2}\\
&\leq & C V(X)^{{1\over p}-{1\over 2}}V(X)^{{1\over 2}-{1\over
p}}N^{n({1\over p}-{1\over 2})}\|\delta_NF\|_{N,\, q}\\
&\leq & C  N^{n({1\over p}-{1\over 2})}\|\delta_N
F\|_{N,q}.
\end{eqnarray*}
This means that ${\rm (AB_{p, m})}$ is satisfied.
This proves Proposition~\ref{prop4.9}.
\end{proof}

\medskip

\begin{remark}\label{re4.10}
Suppose that $\mu(X)<\infty$   and ${\rm (SC^{q, \kappa}_{p, 2, m})}$ holds for some $\kappa\geq 1$. Then
${\rm (SC^{\infty}_{p, 2, m})}$ and ${\rm (G_{p,2,m})}$ are satisfied by  Remark~\ref{re4.7}.
In addition,   ${\rm (AB_{p, m})}$ holds
by   Proposition~\ref{prop4.9}.  Therefore,   Theorem~\ref{th4.11} holds in this case without assumptions
${\rm (G_{p,2,m})}$ and ${\rm (AB_{p, m})}$.
\end{remark}

 \medskip

\noindent
{\it Proof of Theorem~\ref{th4.11}.}  We consider two cases: $t\geq 1/4$ and $t\leq 1/4.$

\smallskip
\noindent
{\it Case} (1).  $t\geq 1/4$.

\smallskip
If  $t\geq 1/4$ then  $\supp \delta_t F
 \subset [0,16]$. By ${\rm (AB_{p, m})}$,
 \begin{eqnarray*}
\|F(t\SL)\|_{p \to p} &\leq&C 16^{\kappa n({1\over p}-{1\over
2})+\varepsilon}
 \|\delta_{16}(F(t\cdot))\|_{16^\kappa,q}
\leq C  \|F\|_{\infty}.
\end{eqnarray*}
 Recall that if  $\alpha>1/q$,
  then $W^{\alpha}_q(\RR) \subseteq L^{\infty}(\RR)\cap C(\RR)$ and $\|F\|_{\infty}\leq C\|F\|_{W^{\alpha}_q}$.
Hence
$$
 \|F(t\SL)\|_{p\to p} + \|F(t\SL)\|_{2\to 2}\leq C\|F\|_{\infty}\leq
C\|F\|_{W^{\alpha}_q}.
$$
Now   \eqref{e4.23} follows   by interpolation.

\smallskip
\noindent
{\it Case} (2).  $t\leq 1/4$.

\smallskip
Let $\xi\in C_c^\infty$ be an even function   such that $\supp
\xi\subset [-1/2, 1/2],\, \hat{\xi}(0)=1$ and $\hat{\xi}^{(k)}(0)=0$ for
all $1\leq k\leq [\alpha]+2$. Write
$\xi_{N^{\kappa-1}}=N^{\kappa-1}\xi(N^{\kappa-1}\cdot)$ where
$N=8[t^{-1}] +1$ and $[t^{-1}]$ denotes the integer part of
$t^{-1}.$ Following \cite{CowS} we write
$$
F(t\SL)=\big(\delta_tF - \xi_{N^{\kappa-1}}\ast
\delta_tF\big)(\SL)+(\xi_{N^{\kappa-1}}\ast \delta_tF)(\SL).
$$
Now we prove that
\begin{eqnarray}
\|\big(\delta_tF - \xi_{N^{\kappa-1}}\ast
\delta_tF\big)(\SL)\|_{r\to r}&\leq& C \|F\|_{W^{\alpha}_q}.
\label{e4.24}
\end{eqnarray}

Observe that $\supp  (\delta_tF - \xi_{N^{\kappa-1}}\ast \delta_tF
)\subseteq [-N, N]$. We apply  ${\rm (AB_{p, m})}$   to obtain
\begin{eqnarray}\label{e4.25} \hspace{1cm}
 \|\big(\delta_tF - \xi_{N^{\kappa-1}}\ast \delta_tF\big)(\SL)\|_{p \to p}
  \leq C   N^{\kappa n({1\over
p}-{1\over 2})+\varepsilon}\big\|\delta_{N}\big(\delta_tF -
\xi_{N^{\kappa-1}}\ast \delta_tF\big)\big\|_{N^\kappa,q}.
\end{eqnarray}
Everything then boils down to estimate $\|\cdot\|_{N^\kappa,q}$
norm of $\delta_{N}\big(\delta_tF - \xi_{N^{\kappa-1}}\ast
\delta_tF\big).$ We make the following claim.
  For the proof we referee readers to  \cite[(3.29)]{CowS} or \cite[Propostion 4.6]{DOS}.

\begin{lemma}\label{le4.12}
Suppose that\ \ $\xi\in C_c^\infty$ is an even function such that
$\supp \xi\subset [-1/2, 1/2],  \ \ \hat{\xi}(0)=1$ and
$\hat{\xi}^{(k)}(0)=0$ for all $1\leq k\leq [\alpha]+2$. Next assume that
 $\supp H\subset [-1,1]$. Then
\begin{eqnarray}\label {e4.26}
\|H-\xi_N\ast H\|_{N,q}\leq CN^{-\alpha}\|H\|_{W^{\alpha}_q}
\end{eqnarray}
for all $\alpha>1/q$ and any $N\in \NN$.
\end{lemma}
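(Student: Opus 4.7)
The plan is to split the estimate into two independent pieces: first, relate the discrete norm $\|\cdot\|_{N,q}$ to a combination of $L^q$ and fractional Sobolev norms via a local Sobolev embedding at scale $1/N$; second, exploit the vanishing moments of $\xi$ to produce the factor $N^{-\alpha}$. Write $G := H - \xi_N \ast H$, whose support is contained in $[-1-1/(2N),1+1/(2N)]$.

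\emph{Step 1 (reduction to Sobolev norms).} I would establish the inequality
\begin{equation*}
\|G\|_{N,q} \le C\bigl(\|G\|_q + N^{-\beta}\|G\|_{W^\beta_q}\bigr), \qquad \beta > 1/q,
\end{equation*}
by applying a rescaled Sobolev embedding on each interval $I_\ell = [(\ell-1)/N,\ell/N)$. Concretely, for $\beta>1/q$ and an interval of length $h=1/N$,
$\sup_{I_\ell}|G|^q \le C\,N\int_{I_\ell}|G|^q\,d\lambda + C\,N^{1-\beta q}\,[G]^q_{W^{\beta,q}(I_\ell)}$, where $[\,\cdot\,]_{W^{\beta,q}}$ is the Gagliardo seminorm. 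Multiplying by $1/(2N)$, summing over $\ell$, and using subadditivity of the Gagliardo seminorm gives the stated bound.

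\emph{Step 2 (Besov/multiplier estimate).} I would establish the pair of bounds
\begin{equation*}
\|G\|_q \le C\,N^{-\alpha}\|H\|_{W^\alpha_q}, \qquad \|G\|_{W^\alpha_q} \le C\|H\|_{W^\alpha_q}.
\end{equation*}
The second is immediate since $\|\xi_N\|_1=\hat\xi(0)=1$ and convolution with $\xi_N$ commutes with Bessel potentials. For the first, the Fourier symbol of $G$ equals $(1-\hat\xi(\tau/N))\hat H(\tau)$, and the hypotheses $\hat\xi(0)=1$, $\hat\xi^{(k)}(0)=0$ for $1\le k\le [\alpha]+2$ yield $|1-\hat\xi(\tau/N)| \le C\min\bigl(1,(|\tau|/N)^{[\alpha]+3}\bigr)$. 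Consequently the multiplier $m_N(\tau):=(1-\hat\xi(\tau/N))(1+\tau^2)^{-\alpha/2}$ is bounded by $CN^{-\alpha}$ pointwise, and its derivatives satisfy the corresponding Mikhlin-type bounds. Equivalently, one rephrases this as the Besov characterization
\begin{equation*}
\sup_{t>0}t^{-\alpha}\|H-\xi_{1/t}\ast H\|_q \le C\|H\|_{B^\alpha_{q,\infty}} \le C\|H\|_{W^\alpha_q},
\end{equation*}
valid precisely because $\xi$ has sufficiently many vanishing moments above order $0$.

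\emph{Step 3 (combination).} Choose any $\beta\in(1/q,\alpha]$, which exists because $\alpha > 1/q$. Applying Step 1 to $G$ and then Step 2 (with $\gamma=0$ for the $L^q$ term and $\gamma=\alpha$, together with $\beta\le\alpha$, for the Sobolev term) yields
\begin{equation*}
\|G\|_{N,q}\le C\bigl(N^{-\alpha}+N^{-\beta}\cdot 1\bigr)\|H\|_{W^\alpha_q}\le C\,N^{-\alpha}\|H\|_{W^\alpha_q},
\end{equation*}
after an interpolation that trades $N^{-\beta}\|G\|_{W^\beta_q}$ against $N^{-\alpha}\|G\|_{W^\alpha_q}$ (or, more directly, using Step 2 at $\gamma=\beta$, which gives $\|G\|_{W^\beta_q}\le CN^{\beta-\alpha}\|H\|_{W^\alpha_q}$).

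The main obstacle is Step 2 at the endpoints $q\in\{1,\infty\}$, where the Mikhlin theorem is not available. One handles this by passing to the Besov-space/Littlewood--Paley viewpoint: decompose $H=\sum_{j\ge 0} H_j$ into frequency annuli $|\tau|\sim 2^j$, apply the pointwise symbol bound on $(1-\hat\xi(\tau/N))$ separately to the low ($2^j\lesssim N$) and high ($2^j\gtrsim N$) parts, and recombine using that the $B^\alpha_{q,\infty}$-norm of $H$ dominates each $2^{j\alpha}\|H_j\|_q$. This argument is standard in smoothness spaces, and the reference to \cite[(3.29)]{CowS} and \cite[Proposition 4.6]{DOS} is precisely for this technical point.
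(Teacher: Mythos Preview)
The paper does not give its own proof of this lemma; it simply refers to \cite[(3.29)]{CowS} and \cite[Proposition~4.6]{DOS}. Your outline follows precisely the strategy used in those references: control the discrete norm $\|\cdot\|_{N,q}$ by a local Sobolev embedding at scale $1/N$, then exploit the vanishing moments of $\xi$ via the Fourier multiplier $1-\hat\xi(\tau/N)$ to extract $N^{-\alpha}$. So the approach is correct and essentially the same as the cited one.

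One point to clean up: in Step~3 the displayed chain
\[
\|G\|_{N,q}\le C\bigl(N^{-\alpha}+N^{-\beta}\cdot 1\bigr)\|H\|_{W^\alpha_q}\le C\,N^{-\alpha}\|H\|_{W^\alpha_q}
\]
is not valid as written, since $\beta\le\alpha$ gives $N^{-\beta}\ge N^{-\alpha}$. The argument only closes through your parenthetical remark, namely the intermediate estimate $\|G\|_{W^\beta_q}\le C\,N^{\beta-\alpha}\|H\|_{W^\alpha_q}$ (same multiplier argument as for $\|G\|_q$, now applied to $(1-\hat\xi(\tau/N))(1+\tau^2)^{(\beta-\alpha)/2}$), which turns $N^{-\beta}\|G\|_{W^\beta_q}$ into $C\,N^{-\alpha}\|H\|_{W^\alpha_q}$. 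You should make that the main line rather than a parenthesis. With that fix, the proof is complete, including your Littlewood--Paley workaround for the endpoints $q\in\{1,\infty\}$.
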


Note that  $\delta_N\big(\delta_tF - \xi_{N^{\kappa-1}}\ast \delta_tF\big)=\delta_{Nt}F - \xi_{N^{\kappa}}\ast \delta_{Nt}F$.
Now, if $\alpha>\max \{{n({1/p}-{1/2})}, 1/q\}$
then \eqref{e4.24} follows from Lemma~\ref{le4.12}, estimate \eqref{e4.25} and the interpolation theorem.

\medskip

It remains to show that
\begin{eqnarray}
\|(\xi_{N^{\kappa-1}}\ast \delta_tF)(\SL)\|_{r\to r}&\leq& C
\|F\|_{W^{\alpha}_q}. \label {e4.27}
\end{eqnarray}
Let $ F^{(\ell)} $ be functions defined in
(\ref{e4.9}). we can write
$$
(\xi_{N^{\kappa-1}}\ast \delta_tF)(\lambda)=\sum_{\ell \ge 0}
\big(\xi_{N^{\kappa-1}}\ast \delta_tF^{(\ell)}\big)(\lambda),
$$
 and hence
\begin{eqnarray}\label{e4.28}
\big\|\big(\xi_{N^{\kappa-1}}\ast \delta_tF \big)(\SL)\big\|_{r\to r
} &\le&
\sum_{\ell \ge 0}\big\|\big(\xi_{N^{\kappa-1}}\ast \delta_tF^{(\ell)}\big) (\SL) \big\|_{r\to r}.
\end{eqnarray}

As in the proof of
Theorem~\ref{th4.5}, we  fix  an $ \epsilon>0$  such that
 $
 2n\epsilon ({1/p}-{1/2}) \leq  \alpha-n({1/p}-{1/2}).
 $
For every $t>0$, and every  $\ell$, we let $\rho_{\ell}=2^{\ell(1+\epsilon)}t$.
Let  $\psi \in C_c^\infty(1/16, 16)$ such that $\psi(\lambda)=1$ for $\lambda \in (1/8,8)$.
We    decompose
\begin{eqnarray}\label{e4.29}
\big(\xi_{N^{\kappa-1}}\ast \delta_tF^{(\ell)} \big)(\sqrt[m]{L})f
&=& \sum_{i,j:\, {d}(x_i,x_j)< 2\rho_{\ell}} P_{\widetilde B_i}
\big(\delta_t\psi \big(\xi_{N^{\kappa-1}}\ast \delta_tF^{(\ell)} \big)\big) (\sqrt[m]{L}) P_{\widetilde
B_j}f\nonumber\\
  &&+ \sum_{i,j:\, {d}(x_i,x_j)< 2\rho_{\ell}} P_{\widetilde B_i}\big((1-\delta_t\psi)
\big(\xi_{N^{\kappa-1}}\ast \delta_tF^{(\ell)} \big)\big) (\sqrt[m]{L}) P_{\widetilde
B_j}f\nonumber\\
&&+  \sum_{i,j:\, {d}(x_i,x_j)\geq 2\rho_{\ell}} P_{\widetilde B_i}\big(\xi_{N^{\kappa-1}}
\ast \delta_tF^{(\ell)} \big)(\sqrt[m]{L}) P_{\widetilde
B_j}f\nonumber\\
&=&: {\mathscr F}^{(\ell)}_{1}(f) +{\mathscr F}^{(\ell)}_{2}(f)  +{\mathscr F}^{(\ell)}_{3}(f) .
\end{eqnarray}
We shall show that
 \begin{eqnarray}\label{e4.30}
 \sum_{\ell=0}^{\infty}\big\|{\mathscr F}^{(\ell)}_{i}(f)
\big\|_{r\to r}
 \le     C\|F\|_{W^{\alpha}_q}, \ \ \  \ i=1,2, 3.
\end{eqnarray}

Similar argument as in  \eqref{e4.15} above give
\begin{eqnarray*}
\big\|{\mathscr F}^{(\ell)}_{1}(f)
\big\|_{r\to r
}
&\le&    \sup_{x\in X} \big\{
V(x,\rho_\ell)^{{1\over p}-{1\over
2}} \big\|\big(\delta_t\psi  (\xi_{N^{\kappa-1}}\ast
\delta_tF^{(\ell)})\big) (\SL)
 P_{B(x,\rho_\ell)}\big\|_{p\to 2 }\big\}\|f\|_r.
\end{eqnarray*}
We  then follow the proof of Theorem 3.6 of \cite{COSY} to get
\begin{eqnarray*}
\big\|\big(\delta_t\psi  (\xi_{N^{\kappa-1}}\ast
\delta_tF^{(\ell)})\big) (\SL)
 P_{B(x,\rho_\ell)}\big\|_{p\to 2 } \le CV(x,\rho_\ell)^{{1\over 2}-{1\over p}}2^{\ell(1+\epsilon) n({1\over
p}-{1\over 2})}   \|F^{(\ell)}\|_{q}.
\end{eqnarray*}
This shows  \eqref{e4.30} for $i=1$ (see  \eqref{e4.18} above).

\smallskip
For $i=2$, the proof of \eqref{e4.30}  is similar to that of \eqref{e4.20}.
For   $i=3$,  we write
\begin{eqnarray*}
\big\|{\mathscr F}^{(\ell)}_{3}(f) \big\|_r^r
&\leq& \sum_{i}\Big( \sum_{j:\, {d}(x_i,x_j)\geq 2^{\ell(1+\epsilon)} t} \big\|
P_{\widetilde B_i}\big(\xi_{N^{\kappa-1}}\ast \delta_tF^{(\ell)} \big) (\sqrt[m]{L}) P_{\widetilde
B_j} f \big\|_r \Big)^r.
\end{eqnarray*}
Observe that if
${d}(x_i,x_j)\geq 2^{\ell(1+\epsilon)} t$,  then  by Lemma~\ref{le4.6},
\begin{eqnarray*}
 \big\| P_{\widetilde B_i}\big(\xi_{N^{\kappa-1}}\ast \delta_tF^{(\ell)} \big)(\sqrt[m]{L}) P_{\widetilde
 B_j} f\big\|_r\nonumber\\
   \leq   C
  \|P_{\widetilde B_j} f\|_r \int_{-\infty}^{+\infty} |{\widehat{\xi_{N^{\kappa-1}}}}(\tau)|
 |\phi(2^{-\ell}\tau) \hat{G}(\tau)|
 \big\| P_{\widetilde B_i}e^{(i\tau-1)t^mL}  P_{\widetilde B_j}  \big\|_{r\to r}
  \;d\tau\\
 \leq
C  e^{-c_1 2^{ \epsilon \ell m\over m-1}  }
\exp\Big(- c_2\Big({d(x_i, x_j)\over  2^{\ell} t}\Big)^{m\over m-1}\Big)   \|F\|_{q}  \|P_{\widetilde B_j} f\|_r.
\end{eqnarray*}
The rest of the proof of \eqref{e4.30} for $i=3$ is just a repetition of the proof  of   \eqref{e4.22} so
 we skip it here.  This completes  the proof of Theorem~\ref{th4.11}.
 \hfill{} $\Box$

\bigskip

\section{H\"ormander-type spectral multiplier theorems}\label{sec5}
\setcounter{equation}{0}

The aim of this section is to obtain H\"ormander-type spectral multiplier theorems
to include singular integral versions of Theorems~\ref{th4.5} and  ~\ref{th4.11}.
 We continue with the assumption that $(X, d, \mu)$ is  a  metric measure  space satisfying the doubling property
  and recall that $n$ is the  doubling dimension from  condition \eqref{e1.3}.
 Fix a non-trival auxiliary function $\eta\in C_c^{\infty}(0, \infty)$.

 \begin{theorem}\label{th4.1}
Let  $L$  be  a non-negative self-adjoint operator   on $L^2(X)$ satisfying
Davies-Gaffney estimates ${\rm (DG_m)}$  and condition  ${\rm (ST^q_{p, 2, m})}$ for some $p, q$ satisfying
  $1\leq p<2$ and $1\leq  q\leq \infty$. Then for any  bounded Borel
function $F$ such that
$\sup_{t>0}\|\eta\, \delta_tF\|_{W^{\alpha}_q}<\infty $ for some
$\alpha>\max\{n(1/p-1/2),1/q\}$,  the operator
$F(L)$ is bounded on $L^r(X)$ for all $p<r<p'$.
In addition,
\begin{eqnarray*}
   \|F(L)  \|_{r\to r}\leq    C_\alpha\Big(\sup_{t>0}\|\eta\, \delta_tF\|_{W^{\alpha}_q}
   + |F(0)|\Big).
\end{eqnarray*}
\end{theorem}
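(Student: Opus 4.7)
The plan is to combine the dyadic spectral multiplier estimate of Theorem~\ref{th4.5} with the reduction criterion of Theorem~\ref{th3.3}. The strategy in one line: restriction estimates $\Rightarrow$ uniform dyadic multipliers $\Rightarrow$ singular multipliers.

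First I verify that the hypotheses of Theorem~\ref{th3.3} are in place. Since $(ST^q_{p,2,m})$ implies $(ST^\infty_{p,2,m})$, and by Proposition~\ref{prop2.3} the latter is equivalent to $(G_{p,2,m})$, we obtain condition $(G_{p_0,2,m})$ with $p_0=p$. Combined with the assumed $(DG_m)$, this supplies the standing hypotheses of Theorem~\ref{th3.3}. Next, taking $R_0=0$ in Theorem~\ref{th4.5} (so that $(ST^q_{p,2,m})_{R_0}$ coincides with $(ST^q_{p,2,m})$), I conclude that for every Borel $H$ with $\supp H\subseteq[1/4,4]$ and every $r\in(p,2]$,
\begin{equation*}
\sup_{t>0}\|H(t\sqrt[m]{L})\|_{r\to r}\leq C_r\|H\|_{W^\alpha_q},
\end{equation*}
provided $\alpha>n(1/p-1/2)$. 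A trivial change of variables converts this to the analogous estimate for $H(tL)$, which is precisely condition \eqref{e3.11} of Theorem~\ref{th3.3}.

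Now I apply Theorem~\ref{th3.3} itself. The assumption that $\alpha>\max\{n(1/p-1/2),1/q\}$ and $\sup_{t>0}\|\eta\delta_tF\|_{W^\alpha_q}<\infty$ translates into \eqref{e3.12} once I observe that the local Sobolev norm defined with a different non-zero auxiliary bump is equivalent to the one defined with $\phi$: indeed, by the support properties and a finite partition argument, $\sup_{t>0}\|\phi\delta_tF\|_{W^\alpha_q}\leq C_{\eta,\phi}\sup_{t>0}\|\eta\delta_tF\|_{W^\alpha_q}$. Then Theorem~\ref{th3.3} delivers that $F(L)$ is bounded on $L^r(X)$ for all $p<r<p'$, with
\begin{equation*}
\|F(L)\|_{r\to r}\leq C\sup_{t>0}\|\phi\delta_tF\|_{W^\alpha_q}\leq C_\alpha\sup_{t>0}\|\eta\delta_tF\|_{W^\alpha_q}.
\end{equation*}

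The one delicate point is the appearance of the $|F(0)|$ term on the right-hand side. The dyadic decomposition employed in Theorems~\ref{th3.3} and \ref{th4.5} only captures the restriction of $F$ to $(0,\infty)$, so the value $F(0)$ must be accounted for separately. I would handle it by splitting $F=F(0)+\big(F-F(0)\big)$: the constant part contributes $F(0)I$ (trivially bounded on $L^r$ with norm $|F(0)|$), while the remainder vanishes at $0$ and satisfies $\sup_t\|\phi\delta_t(F-F(0))\|_{W^\alpha_q}\leq \sup_t\|\phi\delta_tF\|_{W^\alpha_q}+|F(0)|\|\phi\|_{W^\alpha_q}$, so Theorem~\ref{th3.3} applies and yields the stated bound. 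No step is genuinely hard; the main work has been absorbed into Theorems~\ref{th3.3} and \ref{th4.5}, and Theorem~\ref{th4.1} is essentially a clean assembly of those two results together with the equivalence of restriction and Gaussian-type estimates from Proposition~\ref{prop2.3}.
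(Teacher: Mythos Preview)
Your proposal is correct and follows essentially the same route as the paper: derive $(G_{p,2,m})$ from $(ST^q_{p,2,m})$ via Proposition~\ref{prop2.3}, feed the dyadic estimate from Theorem~\ref{th4.5} into the criterion of Theorem~\ref{th3.3}, and assemble. You have in fact filled in more detail than the paper does (the auxiliary-function equivalence and the $|F(0)|$ splitting), and your remark about a change of variables to pass to $H(tL)$ is unnecessary since condition~\eqref{e3.11} is already formulated for $H(t\sqrt[m]{L})$.
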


\begin{proof} Note that   by Proposition~\ref{prop2.3},
  ${\rm (ST^{q}_{p, 2, m})} \Rightarrow {\rm (ST^{\infty}_{p, 2, m})} \Rightarrow {\rm (G_{p,2,m})}$.
  Now Theorem~\ref{th4.1} follows from  Theorems~\ref{th3.3} and  ~\ref{th4.5}.
\end{proof}

\medskip

Note that Gaussian bounds ${\rm (GE_m)}$ implies estimates ${\rm (DG_m)}$ and ${\rm (G_{p, 2, m})}$
so by Proposition~\ref{prop2.3}, ${\rm (ST_{p, 2, m}^{\infty})}$ holds
for $q=\infty$. This means that one can omit
conditions ${\rm (DG_m)}$  and  ${\rm (ST^q_{p, 2, m})}$ in Theorem~\ref{th4.1}
  if the case $q=\infty$ is considered. We describe the details in Theorem~\ref{th4.2} below.

 \medskip

 \begin{theorem} \label{th4.2} Let  $L$  be  a non-negative self-adjoint operator   on $L^2(X)$ satisfying
Gaussian  estimates ${\rm (GE_m)}$. Let  $1\leq p<2$. 

Then for any  bounded Borel
function $F$ such that
$\sup_{t>0}\|\eta\, \delta_tF\|_{W^{\alpha}_{\infty}}<\infty $ for    some
$\alpha> n(1/p-1/2)  $ the operator
$F(L)$ is bounded on $L^r(X)$ for all $p<r<p'$.
In addition,
\begin{eqnarray*}
   \|F(L)  \|_{r\to r}\leq    C_\alpha\Big(\sup_{t>0}\|\eta\, \delta_tF\|_{W^{\alpha}_{\infty}}
   + |F(0)|\Big).
\end{eqnarray*}
 \end{theorem}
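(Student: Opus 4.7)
The plan is to derive Theorem \ref{th4.2} as a direct corollary of Theorem \ref{th4.1} applied with $q=\infty$, after verifying that the Gaussian estimates $(\mathrm{GE}_m)$ entail the two standing hypotheses of that theorem: Davies--Gaffney estimates $(\mathrm{DG}_m)$ and a Stein--Tomas restriction type condition. Note also that with $q=\infty$ we have $1/q = 0$, so $\max\{n(1/p-1/2),1/q\} = n(1/p-1/2)$, which matches the regularity threshold stated.

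First I would check that $(\mathrm{GE}_m)$ implies $(\mathrm{DG}_m)$. Since the kernel $p_t(x,y)$ of $e^{-tL}$ satisfies the pointwise bound in $(\mathrm{GE}_m)$, a direct Schur test (or simply Cauchy--Schwarz on the integral operator) yields an $L^2 \to L^2$ off-diagonal decay of the type $(\mathrm{DG}_m)$; the computation uses doubling to absorb the volume factor $V(x,t^{1/m})^{-1}$ and the Gaussian factor $\exp(-c(d(x,y)/t^{1/m})^{m/(m-1)})$ controls the tails.

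Next I would verify $(\mathrm{G}_{p,2,m})$. Given $f$ supported in $B(x,r)$ with $r \ge t$, I write
\[
\|e^{-t^m L}f\|_2^2 = \int_X \Bigl|\int_{B(x,r)} p_{t^m}(y,z)f(z)\, d\mu(z)\Bigr|^2 d\mu(y),
\]
apply Hölder's inequality in $z$ with exponents $p$ and $p'$, and use the Gaussian bound of $(\mathrm{GE}_m)$ together with the doubling property \eqref{e1.3} to estimate $\|p_{t^m}(y,\cdot)\|_{p'(B(x,r))}$. The resulting bound is of the form $CV(x,r)^{1/2-1/p}(r/t)^{n(1/p-1/2)}\|f\|_p$, which is precisely $(\mathrm{G}_{p,2,m})$. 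By Proposition \ref{prop2.3}, this is equivalent to $(\mathrm{ST}^{\infty}_{p,2,m})$.

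With $(\mathrm{DG}_m)$ and $(\mathrm{ST}^{\infty}_{p,2,m})$ in hand, Theorem \ref{th4.1} applied with $q = \infty$ immediately gives both the $L^r$-boundedness of $F(L)$ for $p<r<p'$ and the claimed norm estimate
\[
\|F(L)\|_{r\to r} \le C_\alpha\Bigl(\sup_{t>0}\|\eta\,\delta_t F\|_{W^{\alpha}_{\infty}} + |F(0)|\Bigr),
\]
for $\alpha > n(1/p-1/2)$. There is no real obstacle here; the only point requiring any care is the passage from the pointwise kernel bound $(\mathrm{GE}_m)$ to the operator-norm estimate $(\mathrm{G}_{p,2,m})$, but this is a routine Hölder/doubling argument. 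The essence of the statement is simply recording that, in the presence of genuine Gaussian heat kernel bounds, the restriction condition for $q=\infty$ is automatic and hence the general multiplier theorem of the previous section specializes cleanly.
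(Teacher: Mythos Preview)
Your proposal is correct and follows essentially the same route as the paper: the paper notes that $(\mathrm{GE}_m)$ implies both $(\mathrm{DG}_m)$ and $(\mathrm{G}_{p,2,m})$, invokes Proposition~\ref{prop2.3} to obtain $(\mathrm{ST}^{\infty}_{p,2,m})$, and then applies Theorem~\ref{th4.1} with $q=\infty$. Your write-up supplies a bit more detail on the implications $(\mathrm{GE}_m)\Rightarrow(\mathrm{DG}_m)$ and $(\mathrm{GE}_m)\Rightarrow(\mathrm{G}_{p,2,m})$, but the logical structure is identical.
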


 \begin{proof}  Theorem~\ref{th4.2} follows Proposition~\ref{prop2.3} and  Theorem~\ref{th4.1}.
\end{proof}

The next theorem
 is a variation of Theorem~\ref{th4.1} suitable for the operators satisfying
 condition ${\rm (SC^{q, \kappa}_{p, 2, m})}$.

\begin{theorem}\label{th4.8}  Suppose the operator $L$ satisfies
Davies-Gaffney estimates ${\rm (DG_m)}$,
 conditions   ${\rm (G_{p,2,m})}$ and   ${\rm (SC^{q, \kappa}_{p, 2, m})}$ for some $p,q$ such that $1\leq p<2$
and $1\leq q\leq \infty$, and  a fixed natural number $\kappa$. In
addition, we assume that for any $\varepsilon>0$ there exists a
constant $C_\varepsilon$ such that for all $N\in \NN$ and
all even Borel functions $F$ such that supp $F\subset [-N,N]$,
$$
\|F(\SL)\|_{p\to p}\leq C_\varepsilon N^{\kappa n({1\over p}-{1\over
2})+\varepsilon}\|\delta_N F\|_{N^{\kappa},q}. \leqno{\rm (AB_{p, m})}
$$
 
  Then for any even bounded Borel function $F$
 such that
$\sup_{t>0}\|\eta\, \delta_tF\|_{W^{\alpha}_q}<\infty $ for some
$\alpha>\max\{n(1/p-1/2),1/q\}$  the operator $F(L)$ is bounded on
$L^r(X)$ for all $p<r<p'$. In addition,
\begin{eqnarray*}
   \|F(L)  \|_{r\to r}\leq    C_\alpha\Big(\sup_{t>0}\|\eta\, \delta_tF\|_{W^{\alpha}_q}
   + |F(0)|\Big).
\end{eqnarray*}
\end{theorem}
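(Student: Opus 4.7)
The plan is to follow the same template used for Theorem~\ref{th4.1}: combine the dyadically-supported multiplier bound with the lifting principle of Theorem~\ref{th3.3}. The only substantive change is that the dyadic input here is Theorem~\ref{th4.11} rather than Theorem~\ref{th4.5}, because we are working under the Sogge-type spectral cluster condition ${\rm (SC^{q,\kappa}_{p,2,m})}$ together with the auxiliary a priori bound ${\rm (AB_{p,m})}$, in place of ${\rm (ST^q_{p,2,m})}$.

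The first step would be to apply Theorem~\ref{th4.11}. All four of its hypotheses---${\rm (DG_m)}$, ${\rm (G_{p,2,m})}$, ${\rm (SC^{q,\kappa}_{p,2,m})}$ and ${\rm (AB_{p,m})}$---are explicitly assumed in Theorem~\ref{th4.8}, so its conclusion would give, for every bounded Borel $H$ with $\supp H\subseteq [1/4,4]$ and every $\alpha>\max\{n(1/p-1/2),1/q\}$,
$$
\sup_{t>0}\|H(t\sqrt[m]{L})\|_{r\to r}\le C_r\|H\|_{W^\alpha_q}\qquad\text{for every }r\in(p,2].
$$
For any fixed $r\in(p,2)$ this is precisely condition \eqref{e3.11} of Theorem~\ref{th3.3}, with the latter's $p_0$ played by our $p$ and its $p$ played by our chosen $r$. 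Since ${\rm (DG_m)}$ and ${\rm (G_{p,2,m})}$ also hold, I would then invoke Theorem~\ref{th3.3} to obtain, for every $F$ with $\sup_{t>0}\|\phi\delta_tF\|_{W^\alpha_q}<\infty$,
$$
\|F(L)\|_{s\to s}\le C\sup_{t>0}\|\phi\delta_tF\|_{W^\alpha_q}\qquad\text{for every }s\in(r,r').
$$
Letting $r\downarrow p$ sweeps out the whole interval $(p,p')$, and standard equivalences for local Sobolev-type norms allow the bump $\phi$ in the last display to be replaced by the fixed auxiliary function $\eta$ at the cost of a harmless constant.

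The one point that is not an immediate corollary---and where the main (though mild) obstacle lies---is the additive $|F(0)|$ in the conclusion. The dyadic partition $\sum_k\phi_k$ used inside Theorem~\ref{th3.3} resolves the identity only on the range of $E_L(0,\infty)$, so the estimates above actually control only $F(L)(I-E_L(\{0\}))=(F\chi_{(0,\infty)})(L)$. To recover $F(L)$ itself I would write $F(L)=F(0)E_L(\{0\})+(F\chi_{(0,\infty)})(L)$ and bound the first summand on $L^s$ by dominating $E_L(\{0\})$ in operator norm by any smooth cutoff $\eta_0(\sqrt[m]{L})$ with $\eta_0(0)=1$; its $L^s$-boundedness for $s\in(p,p')$ is guaranteed by Lemma~\ref{le2.4}(a) under ${\rm (G_{p,2,m})}$. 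This yields the required $|F(0)|$ contribution. Beyond this bookkeeping with the zero eigenspace, the argument is pure assembly from Theorems~\ref{th3.3} and~\ref{th4.11} and requires no further new estimate.
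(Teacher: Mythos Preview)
Your proposal is correct and follows essentially the same route as the paper: the paper's proof of Theorem~\ref{th4.8} consists of the single line ``Theorem~\ref{th4.8} follows from Theorems~\ref{th3.3} and~\ref{th4.11},'' which is exactly the assembly you describe. Your additional bookkeeping for the $|F(0)|$ term via the spectral projection $E_L(\{0\})$ is a reasonable elaboration of a point the paper leaves implicit.
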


 \begin{proof}  Theorem~\ref{th4.8} follows  Theorems~\ref{th3.3} and  ~\ref{th4.11}.
\end{proof}

\begin{remark}\label{re4.1000}
Suppose that $\mu(X)<\infty$   and ${\rm (SC^{q, \kappa}_{p, 2, m})}$ holds for some $\kappa\geq 1$. Then
${\rm (SC^{\infty}_{p, 2, m})}$ and ${\rm (G_{p,2,m})}$ are satisfied by  Remark~\ref{re4.7}.
In addition,   ${\rm (AB_{p, m})}$ holds
by   Proposition~\ref{prop4.9}.  Therefore,   Theorem~\ref{th4.8} holds in this case without assumptions
${\rm (G_{p,2,m})}$ and ${\rm (AB_{p, m})}$.
\end{remark}

\medskip

 \section{Applications}\label{appl5}
 \setcounter{equation}{0}

 As an illustration of our  results we shall  discuss a few of possible applications.
Our main results,   Theorems~\ref{th4.5}, ~\ref{th4.11}, ~\ref{th4.1} and ~\ref{th4.8},
   can be applied to all examples which are
  discussed in \cite{DOS} and \cite{COSY}. Those  include the standard Laplace operator; Laplace-Beltrami operator acting
on compact manifolds; the Laplace-Beltrami operator and some of its perturbation on
asymptotically conic manifolds, see \cite{GHS}; the harmonic oscillator and its perturbations; homogeneous
sub-Laplacians on nilpotent Lie groups. We do not discuss the details here as the obtained corollaries coincide
with applications described in \cite{COSY}, except that we are not able to prove endpoints estimates for
Bochner-Riesz sumability.
We suspect that endpoints results possibly do not hold in $m$-th order operators setting.

\medskip
\subsection{$m$-th order differential operators on compact manifolds}
For a general positive definite elliptic operator on a compact manifold, condition ${\rm (GE_m)}$
holds by general elliptic regularity theory.  As a consequence of
Theorem~\ref{th4.8}, we obtain alternative proof
of Theorem 3.2 in \cite{SS} described by A. Seeger and C. Sogge. The result can be stated in the following way.

\begin{theorem}\label{th5.1} Let $M$ be a compact connected manifold without boundary of dimension
  $n\geq 2$.
Let $P_m(x, D)$ be a positive definite elliptic pseudo-differential operator of order $m$ on
  $M$.  Suppose that  for each $x\in M$, the cosphere
\begin{eqnarray}\label{e5.1}
\Sigma_x=\{\xi\in T^{\ast}_x M\backslash 0: P_m(x,\xi)=1 \}
\end{eqnarray}
 has nonzero Gaussian curvature everywhere, where
$P_m(x, \xi)$ is the principal symbol.
 Let $1\leq p \leq 2(n+1)/(n+3)$  and $1\leq q\leq \infty$.
Then for any even bounded Borel function $F$
 such that
$\sup_{t>0}\|\eta\, \delta_tF\|_{W^{\alpha}_q}<\infty $ for some
$\alpha>\max\{n(1/p-1/2),1/q\}$,  the operator $F( {P_m})$ is bounded on
$L^r(X)$ for all $p<r<p'$. In addition,
\begin{eqnarray*}
   \|F( {P_m})  \|_{r\to r}\leq    C_\alpha \sup_{t>0}\|\eta\, \delta_tF\|_{W^{\alpha}_q}.
\end{eqnarray*}
\end{theorem}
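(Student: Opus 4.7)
The plan is to verify the hypotheses of Theorem~\ref{th4.8} for $L=P_m$ and then apply it. Because $M$ is compact, $\mu(M)<\infty$, and Remark~\ref{re4.1000} allows us to dispense with $(G_{p,2,m})$ and $(AB_{p,m})$; it then suffices to check Davies--Gaffney $(DG_m)$ together with the Sogge spectral cluster condition $(SC^{q,\kappa}_{p,2,m})$.

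Davies--Gaffney is immediate: since $P_m$ is a positive definite elliptic pseudo-differential operator of order $m$ on the compact manifold $M$, classical pseudo-differential calculus yields the $m$-th order Gaussian heat kernel bound $(GE_m)$ for $e^{-tP_m}$, and this implies $(DG_m)$.

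The substantive step is the verification of $(SC^{q,\kappa}_{p,2,m})$. Here the curvature hypothesis on $\Sigma_x$ enters through Sogge's classical spectral cluster estimate: for $1\le p\le 2(n+1)/(n+3)$,
\[
\bigl\|\chi_{[N,N+1]}(\sqrt[m]{P_m})\bigr\|_{p\to 2}\;\le\;C\,N^{\,n(1/p-1/2)-1/2}.
\]
This estimate is proved by constructing a Fourier integral operator parametrix for the half-wave propagator $e^{it\sqrt[m]{P_m}}$ and reducing the bound to oscillatory integral estimates whose required nondegeneracy is precisely the nonvanishing Gaussian curvature of $\Sigma_x$ (the manifold analog of the Stein--Tomas restriction theorem on curved hypersurfaces). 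A $T^{*}T$ argument then upgrades the unit-cluster bound to arbitrary even $F$ supported in $[-N,N]$: using $\|\chi_{[\ell,\ell+1]}\|_{p\to p'}=\|\chi_{[\ell,\ell+1]}\|_{p\to 2}^{2}\le C\ell^{2n(1/p-1/2)-1}$ together with spectral orthogonality,
\[
\|F(\sqrt[m]{P_m})\|_{p\to 2}^{2}
=\bigl\||F|^{2}(\sqrt[m]{P_m})\bigr\|_{p\to p'}
\le C\,N^{\,2n(1/p-1/2)}\,\|\delta_N F\|_{N,2}^{2}.
\]
Combined with the compact-manifold volume bound $V(x,r)\le Cr^{n}$, this establishes $(SC^{2,1}_{p,2,m})$. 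The case $q>2$ follows by monotonicity of $\|\cdot\|_{N,q}$ in $q$; the endpoint $(SC^{\infty,1}_{p,2,m})$ is a direct consequence of $(GE_m)$ via Proposition~\ref{prop2.3}; the intermediate range is filled in by interpolation, while the small-$q$ range is covered by standard Sobolev embeddings applied to the hypothesis side.

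With these ingredients Remark~\ref{re4.1000} and Theorem~\ref{th4.8} yield the asserted bound $\|F(P_m)\|_{r\to r}\le C_\alpha \sup_{t>0}\|\eta\,\delta_t F\|_{W^{\alpha}_q}$ for $p<r<p'$. The $|F(0)|$ term appearing in Theorem~\ref{th4.8} is absorbed into the Sobolev norm via the embedding $W^{\alpha}_q\hookrightarrow L^{\infty}$ (valid since $\alpha>1/q$), and in any case the kernel of $P_m$ is finite-dimensional on $M$ so the spectral projection onto $\{0\}$ is bounded on every $L^r$. The main obstacle in the whole argument is Sogge's cluster estimate itself, which encodes the essential interplay between the curvature of $\Sigma_x$ and the oscillatory behaviour of the FIO parametrix; once it is granted, everything else is a mechanical application of the abstract machinery of Section~\ref{sec5}.
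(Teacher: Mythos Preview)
Your proposal is correct and follows essentially the same route as the paper: establish the Sogge spectral cluster condition ${\rm (SC^{2,1}_{p,2,m})}$ for $1\le p\le 2(n+1)/(n+3)$ from Sogge's unit-width cluster bound (the paper simply cites Corollary~2.2 of \cite{SS2} for this, while you sketch the $T^*T$ reduction explicitly), then invoke Theorem~\ref{th4.8} together with Remark~\ref{re4.1000}, which on a compact manifold renders ${\rm (G_{p,2,m})}$ and ${\rm (AB_{p,m})}$ redundant. Your additional remarks on ${\rm (DG_m)}$ via ${\rm (GE_m)}$, on extending to $q\neq 2$, and on absorbing the $|F(0)|$ term are consistent with the paper's implicit treatment.
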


\noindent
\begin{proof} Under the non-degenerate  assumption of the cospheres
$\Sigma_x$, it follows by Corollary 2.2 of \cite{SS2} that estimates ${\rm (SC^{2,1}_{p,2})}$ hold  for
$1\leq p \leq 2(n+1)/(n+3)$. Then the result is a consequence of  Theorem~\ref{th4.8} and Remark~\ref{re4.1000}.
\end{proof}

\medskip
\subsection{  $m$-th order  elliptic differential operators with constant coefficients}
Let $P_m(D)$ be a real homogeneous   elliptic polynomial of order $m$ on $\R^n, n\geq 2$, and $\Sigma$
is a hypersurface
defined by
\begin{eqnarray}\label{e5.2}
\Sigma=\{\xi\in \R^n: |P_m(\xi)|=1 \},
\end{eqnarray}
where
$P_m(\xi)$ is the   symbol.  Recall that $\Sigma$ is of finite type if there exist $k\in{\mathbb N}$ and $C>0$
such that
\begin{eqnarray}\label{e5.3}
\sum_{j=1}^k\big|\langle \eta, \nabla\rangle^{j} P_m(\xi)\big| \geq C>0, \ \ \xi\in\Sigma \ \
{\rm and}\ \ \eta\in {\bf S}^{n-1},
\end{eqnarray}
where $\langle \eta, \nabla\rangle=\sum_{i=1}^n \eta_i {\partial/\partial x_i}.$
The least $k$ in \eqref{e5.3} is called the type order of $\Sigma.$ Say that $\Sigma$
is convex if
\begin{eqnarray}\label{e5.4}
\Sigma\subseteq \big\{\eta\in\R^n \big| \  \langle \eta-\xi, \nabla P_m(\xi)\rangle\geq 0\big\}, \ \ \xi\in\Sigma
\end{eqnarray}
or
\begin{eqnarray}\label{e5.5}
\Sigma\subseteq \big\{\eta\in\R^n \big| \  \langle \eta-\xi, \nabla P_m(\xi)\rangle\leq 0\big\}, \ \ \xi\in\Sigma.
\end{eqnarray}

For a given $P_m$, we know that the corresponding $\Sigma$ is always of finite type and $2\leq k\leq m$.
But it is obviously not always convex. The hypersurface $\Sigma$ is convex and $k=2$ if and only if
$\Sigma$ has nonzero Gaussian curvature everywhere.
A simple example of polynomials whose level hypersurface $\Sigma$ is of type $m$ is
$\xi_1^m +\cdots +\xi_n^m (m=4,6, \cdots)$. We notice that there exist polynomials $P_m$ whose level
hypersurfaces  $\Sigma$ are of type $k (<m)$.     For example, when
$P_6(\xi)=\xi_1^6 +5\xi_1^2\xi_2^4 +\xi_2^6$,
the
corresponding hypersurface $\Sigma$ is of type $4$, but $m=6$ (see \cite{DYa}).

\begin{proposition}\label{prop5.2}
Let $P_m(D)$ be a real homogeneous   elliptic polynomial of order $m$ on $\R^n, n\geq 2$.
Suppose $\Sigma$ is a convex hypersurface of finite
type $k $  for $2\leq k\leq m$  and that  $1\le p \leq {2(n-1+k)/(n-1+2k)}$. Alternatively
assume that $\Sigma$ has nonzero Gaussian curvature everywhere and that $1\leq p \leq 2(n+1)/(n+3)$.
Then we have
 \begin{equation}\label{e5.6}
 \|d E_{\sqrt[m]{L}}(\lambda)\|_{p \to p' }\le C\lambda^{n(\frac{1}{p} -\frac{1}{p'}) -1}, \ \ \ \ \lambda>0.
 \end{equation}
Hence condition ${\rm (ST^{2 }_{p, 2, m})} $ holds.
\end{proposition}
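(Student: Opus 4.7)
The plan is to identify $dE_{\sqrt[m]{L}}(\lambda)$ explicitly via the Fourier transform, reduce to the case $\lambda=1$ by homogeneity, and then invoke the appropriate Stein--Tomas type restriction theorem on $\Sigma$. Since $L=P_m(D)$ is a Fourier multiplier operator, functional calculus gives $F(\sqrt[m]{L})f=\mathcal{F}^{-1}\bigl[F(P_m(\xi)^{1/m})\hat f(\xi)\bigr]$. A standard coarea-formula computation then shows that, at $\lambda=1$, the spectral density $dE_{\sqrt[m]{L}}(1)$ equals, up to a smooth nowhere-vanishing weight $|\nabla P_m^{1/m}|^{-1}$ (smoothness and non-degeneracy on $\Sigma$ being ensured by ellipticity of $P_m$ together with $0\notin\Sigma$), the operator $R^{\ast}R$, where $R\colon f\mapsto \hat f|_{\Sigma}$ is the Fourier restriction operator to $\Sigma$.

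Next I would remove the parameter $\lambda$ by homogeneity. Writing $D_{\lambda}f(x)=f(\lambda x)$ and using the identity $D_{\lambda}^{-1}P_m(D)D_{\lambda}=\lambda^{m}P_m(D)$, one obtains $D_{\lambda}^{-1}E_{\sqrt[m]{L}}([0,\mu])D_{\lambda}=E_{\sqrt[m]{L}}([0,\mu/\lambda])$. Differentiating in $\mu$ and specialising at $\mu=\lambda$ yields the intertwining relation $dE_{\sqrt[m]{L}}(\lambda)=\lambda^{-1}D_{\lambda}\,dE_{\sqrt[m]{L}}(1)\,D_{\lambda}^{-1}$. Since $\|D_{\lambda}\|_{p'\to p'}\|D_{\lambda}^{-1}\|_{p\to p}=\lambda^{n(1/p-1/p')}$, this immediately gives
\[
\|dE_{\sqrt[m]{L}}(\lambda)\|_{p\to p'}=\lambda^{\,n(1/p-1/p')-1}\,\|dE_{\sqrt[m]{L}}(1)\|_{p\to p'},
\]
so the whole proposition reduces to proving the single estimate $\|dE_{\sqrt[m]{L}}(1)\|_{p\to p'}<\infty$.

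By the classical $T^{\ast}T$ principle, this last bound is equivalent to the Fourier restriction estimate $\|Rf\|_{L^{2}(\Sigma,d\sigma)}\le C\|f\|_{L^{p}(\R^{n})}$. When $\Sigma$ has everywhere nonvanishing Gaussian curvature, this is precisely the Stein--Tomas theorem and holds in the range $1\le p\le 2(n+1)/(n+3)$. When $\Sigma$ is only convex and of finite line type $k$, the analogous restriction estimate in the range $1\le p\le 2(n-1+k)/(n-1+2k)$ is the extension due to Greenleaf, which rests on the sharp decay $|\widehat{d\sigma}(\xi)|\lesssim (1+|\xi|)^{-(n-1)/k}$ valid on such surfaces. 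Combining either restriction theorem with the scaling identity above gives \eqref{e5.6}; condition $(\mathrm{ST}^{2}_{p,2,m})$ then follows from Proposition~\ref{prop4.4}, whose hypothesis $V(x,r)\sim r^{n}$ is automatic for $(\R^{n},|\cdot|,dx)$.

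The main technical step will be the identification of $dE_{\sqrt[m]{L}}(1)$ with $R^{\ast}R$ modulo a smooth factor: one must apply the coarea formula carefully to $P_m^{1/m}$ and verify that $|\nabla P_m^{1/m}|$ stays bounded above and below on $\Sigma$, using ellipticity and homogeneity of $P_m$. The scaling identity and the $T^{\ast}T$ reduction are routine, and the two restriction theorems (Stein--Tomas and Greenleaf) are invoked as black boxes.
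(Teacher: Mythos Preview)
Your proposal is correct and follows essentially the same route as the paper: the paper's proof consists of a direct citation of Theorem~B in \cite{BNW} (the Fourier decay estimate $|\widehat{d\sigma}(\xi)|\lesssim (1+|\xi|)^{-(n-1)/k}$ for convex finite-type hypersurfaces) and Theorem~1 in \cite{G} (Greenleaf's Stein--Tomas theorem under a decay hypothesis), which is exactly the pair of black boxes you invoke. Your write-up simply makes explicit the intermediate steps---the coarea identification $dE_{\sqrt[m]{L}}(1)\simeq R^\ast R$, the scaling reduction, and the appeal to Proposition~\ref{prop4.4}---that the paper leaves implicit; nothing in your argument diverges from the intended proof.
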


\begin{proof}
 Estimates  \eqref{e5.6} and ${\rm (ST^{2 }_{p, 2, m})} $ follow from
Theorem B in \cite{BNW} and Theorem 1 in \cite{G}.
\end{proof}

\medskip

  We are now able to state the following result describing spectral multipliers for $m$-th
   order  elliptic differential operators with constant coefficients.

\begin{theorem}\label{th5.3} Suppose $\Sigma$ is a convex hypersurface of finite
type $k $  for $2\leq k\leq m$  and that  $1\le p \leq {2(n-1+k)}/{(n-1+2k)}$. Alternatively
assume that $\Sigma$  has nonzero Gaussian curvature everywhere and that $1\leq p \leq 2(n+1)/(n+3)$.
Then for any even bounded Borel function $F$
 such that
$\sup_{t>0}\|\eta\, \delta_tF\|_{{W^{\alpha}_q}}<\infty $ for some
$\alpha>n(1/p-1/2)$ and $1\leq q\leq \infty$, the operator $F( {P_m})$ is bounded on
$L^r(X)$ for all $p<r<p'$. In addition,
\begin{eqnarray*}
   \|F( {P_m})  \|_{r\to r}\leq    C_\alpha \sup_{t>0}\|\eta\, \delta_tF\|_{W^{\alpha}_q}.
\end{eqnarray*}
\end{theorem}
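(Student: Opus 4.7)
I would reduce Theorem~\ref{th5.3} to the abstract H\"ormander-type multiplier theorem, Theorem~\ref{th4.1}, applied with $L=P_m(D)$ on $X=\R^n$ equipped with Lebesgue measure; then $V(x,r)\sim r^n$, so the doubling exponent from \eqref{e1.3} may be taken to be $n$.

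First I verify the two structural hypotheses of Theorem~\ref{th4.1}. Since $P_m$ is a positive homogeneous elliptic polynomial with constant coefficients, its heat kernel
\[
K_t(x)=(2\pi)^{-n}\int_{\R^n} e^{-tP_m(\xi)+ix\cdot\xi}\,d\xi
\]
admits the $m$-th order Gaussian bound ${\rm (GE_m)}$ by a classical Fourier-analytic contour-shift argument that exploits the ellipticity $P_m(\xi)\ge c|\xi|^m$; in particular ${\rm (DG_m)}$ follows. The restriction-type condition ${\rm (ST^{2}_{p,2,m})}$ is exactly what Proposition~\ref{prop5.2} supplies under either stated geometric hypothesis on the level surface $\Sigma$. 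Because $\delta_RF$ is supported in the unit interval, H\"older on a set of measure $1$ gives $\|\delta_RF\|_2\le\|\delta_RF\|_q$ for every $q\ge 2$, and hence ${\rm (ST^{q}_{p,2,m})}$ propagates to every $q\in[2,\infty]$.

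To conclude I invoke Theorem~\ref{th4.1}. A quick computation shows that in both admissible ranges of $p$ in Theorem~\ref{th5.3} with $n\ge 2$ one has $n(1/p-1/2)\ge 2/3>1/2$: indeed, for $p\le 2(n-1+k)/(n-1+2k)$ with $k\ge 2$ one gets $n(1/p-1/2)\ge nk/(2(n-1+k))\ge 2/3$, and for $p\le 2(n+1)/(n+3)$ one gets $n(1/p-1/2)\ge n/(n+1)\ge 2/3$. Therefore, for any $q\in[2,\infty]$, $1/q\le 1/2<n(1/p-1/2)$, and the threshold $\max\{n(1/p-1/2),1/q\}$ demanded by Theorem~\ref{th4.1} collapses to $n(1/p-1/2)$. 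Combining this with ${\rm (ST^{q}_{p,2,m})}$ and the hypothesis $\sup_{t>0}\|\eta\delta_tF\|_{W^\alpha_q}<\infty$, Theorem~\ref{th4.1} delivers
\[
\|F(P_m)\|_{r\to r}\le C_\alpha\sup_{t>0}\|\eta\delta_tF\|_{W^\alpha_q},\qquad p<r<p',
\]
which is Theorem~\ref{th5.3}. The residual case $1\le q<2$ is handled by the 1-D Sobolev embedding $W^\alpha_q(\R)\hookrightarrow W^{\alpha'}_2(\R)$ on the compact support of $\eta\delta_tF$, reducing to the $q=2$ case at the mild cost of a smaller regularity exponent $\alpha'$.

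The only part that is not pure bookkeeping is the Gaussian bound ${\rm (GE_m)}$ for higher-order constant-coefficient elliptic operators: this is classical but takes genuine work at order $m>2$ (one must estimate the oscillatory integral defining $K_t$ via complex deformation of contours, using that $\Re P_m(\xi+i\eta)\ge c|\xi|^m - C|\eta|^m$ in a conic neighborhood of $\R^n$). Everything else is a direct appeal to Proposition~\ref{prop5.2} and Theorem~\ref{th4.1}.
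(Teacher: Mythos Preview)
Your approach is essentially the paper's: verify ${\rm (GE_m)}$ for $P_m(D)$ (the paper simply cites \cite{Da}), invoke Proposition~\ref{prop5.2} for ${\rm (ST^{2}_{p,2,m})}$, and apply Theorem~\ref{th4.1}. You are in fact more careful than the paper about one point: Theorem~\ref{th4.1} requires $\alpha>\max\{n(1/p-1/2),1/q\}$, whereas Theorem~\ref{th5.3} is stated only with $\alpha>n(1/p-1/2)$, and you correctly check that for $q\ge 2$ the maximum collapses since $n(1/p-1/2)\ge 2/3>1/2\ge 1/q$ in the admissible range of $p$.

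Your handling of $1\le q<2$, however, does not close the gap. The embedding $W^{\alpha}_q(\R)\hookrightarrow W^{\alpha'}_2(\R)$ for $q<2$ costs exactly $\alpha-\alpha'=1/q-1/2$ derivatives, so from $\alpha>n(1/p-1/2)$ you only obtain $\alpha'>n(1/p-1/2)-(1/q-1/2)$, which need not exceed $n(1/p-1/2)$. Concretely, at the endpoint $p=2(n+1)/(n+3)$ one has $n(1/p-1/2)=n/(n+1)<1$, so for $q=1$ and $\alpha$ just above $n/(n+1)$ your reduction fails. The paper's own proof does not address this either; it simply declares Theorem~\ref{th5.3} a ``straightforward consequence'' of Theorem~\ref{th4.1}, so strictly speaking the range $q<2$ with $n(1/p-1/2)<1/q$ is not covered by the argument given. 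This is a minor imprecision in the statement of Theorem~\ref{th5.3} rather than a flaw in your strategy.
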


\begin{proof} It is known that  the
semigroup $e^{-tL}$ has integral kernels $p_t(x,y)$ satisfying the following estimates
${\rm (GE_m)}$ (see \cite{Da}). Now Theorem \ref{th5.3}
is a straightforward consequence of Proposition \ref{prop5.2} and Theorem \ref{th4.1}.
\end{proof}

\medskip
\subsection{Biharmonic operators with rough potentials}\label{appl5.3} In the section we
consider the biharmonic operator $\Delta^2=(\partial_{1}^2   +\partial_{2}^2+\partial_{3}^2)^2$
acting on $L^2(\R^3)$.
Assume now that $n=3$ and $V$ is a real-valued
measurable function such that $
0\leq V\in L^1({\mathbb R}^3)\cap L^2({\mathbb R}^3).
$
We define a self-adjoint operator $L$ as Friedrich's extension of the operator $\Delta^2+V$ initially defined on
$C_c^\infty(\R^3)$.

To be able to apply our results to the operator $L$ we first show that the corresponding semigroup
satisfies Davies-Gaffney estimates  ${\rm (DG_4)}$ and $4$-th order Gaussian bounds ${\rm (GE_4)}$.

\begin{proposition}\label{prop5.4}
Suppose that $
0\leq V\in L^1({\mathbb R}^3)$. Then the semigroup generated by the operator $H =\Delta^2+V$
and the corresponding heat kernel $p_t(x,y)$ satisfies
Davies-Gaffney estimates  ${\rm (DG_4)}$ and Gaussian bounds ${\rm (GE_4)}$.
\end{proposition}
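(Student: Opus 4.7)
The plan is to establish $\mathrm{(DG_4)}$ first by a twisted-form argument, and then combine it with an on-diagonal $L^1\to L^\infty$ bound to obtain $\mathrm{(GE_4)}$.

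For $\mathrm{(DG_4)}$ I would use Davies's twisted-form method. Fix an affine $1$-Lipschitz function $\phi$ on $\R^3$ (so $\Delta\phi\equiv 0$ and $\nabla\phi$ is constant with $|\nabla\phi|\leq 1$) and, for $\rho>0$, consider the twisted operator $H^\rho := e^{-\rho\phi} H e^{\rho\phi}$ with sesquilinear form
\[
h^\rho[u,u] = \int_{\R^3} \Delta(e^{\rho\phi}u)\,\overline{\Delta(e^{-\rho\phi}u)}\,dx + \int_{\R^3} V|u|^2\,dx.
\]
Expanding $\Delta(e^{\pm\rho\phi}u) = e^{\pm\rho\phi}(\Delta u \pm 2\rho\,\nabla\phi\cdot\nabla u + \rho^2|\nabla\phi|^2 u)$ and integrating by parts (the cross terms $\int \Delta u\,\overline{\nabla\phi\cdot\nabla u}$ vanish since $\Delta\phi\equiv 0$), the real part of the free-biharmonic contribution reduces to
\[
\Re h_0^\rho[u,u] = \|\Delta u\|_2^2 - 2\rho^2\|\nabla u\|_2^2 - 4\rho^2\|\nabla\phi\cdot\nabla u\|_2^2 + \rho^4\|u\|_2^2,
\]
which, via $\|\nabla u\|_2^2 \leq \|u\|_2\|\Delta u\|_2$ and Young's inequality, is bounded below by $-C\rho^4\|u\|_2^2$. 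The decisive point is that $V\geq 0$ forces $\int V|u|^2\geq 0$, so $\Re h^\rho[u,u]\geq -C\rho^4\|u\|_2^2$ as well; this yields $\|e^{-tH^\rho}\|_{2\to 2}\leq e^{Ct\rho^4}$. Choosing $\phi$ appropriately between sets $E,F$ at distance $d$ and optimizing $\rho\sim(d/t)^{1/3}$ in $\|P_E e^{-tH} P_F\|_{2\to 2}\leq e^{-\rho d + Ct\rho^4}$ gives $\mathrm{(DG_4)}$ with the rate $\exp(-c(d/t^{1/4})^{4/3})$.

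For $\mathrm{(GE_4)}$ I would combine ultracontractivity with $\mathrm{(DG_4)}$. Since $n=3<4$, the Gagliardo--Nirenberg/Nash inequality $\|u\|_2^{2+8/3}\leq C\|u\|_1^{8/3}\|\Delta u\|_2^2$ holds on $\R^3$, and the lower bound $h[u,u]\geq\|\Delta u\|_2^2$ (from $V\geq 0$) upgrades it to a Nash inequality for the form of $H$. Running the standard Nash argument on $\tfrac{d}{dt}\|e^{-tH}u\|_2^2 = -2h[e^{-tH}u, e^{-tH}u]$ yields $\|e^{-tH}\|_{1\to 2}\leq Ct^{-3/8}$, hence by self-adjointness and semigroup composition $\|e^{-tH}\|_{1\to\infty}\leq Ct^{-3/4}$. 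Combining this on-diagonal bound with $\mathrm{(DG_4)}$ through the three-factor decomposition $e^{-tH} = e^{-(t/3)H}e^{-(t/3)H}e^{-(t/3)H}$ -- the central piece estimated off-diagonally by $\mathrm{(DG_4)}$ between $B(x, t^{1/4})$ and $B(y, t^{1/4})$, and the outer pieces by $L^1\to L^2$ and $L^2\to L^\infty$ smoothing -- then produces the pointwise Gaussian bound $\mathrm{(GE_4)}$.

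The main technical obstacle is the Nash-to-ultracontractivity step, which requires uniform $L^1$-boundedness of $e^{-tH}$. This is automatic for second-order positivity-preserving semigroups but not for fourth-order operators, since $e^{-t\Delta^2}$ itself has a sign-changing kernel; nevertheless, the Gaussian bound for the free kernel $K^{(0)}_t$ gives $\int |K_t^{(0)}(x,y)|\,dy\leq C$ uniformly in $t$, and this uniform $L^1\to L^1$ bound must then be propagated to $H=\Delta^2+V$ via the Duhamel expansion $p_t = K_t^{(0)} + \sum_{k\geq 1}(-1)^k K_t^{(k)}$, exploiting $V\in L^1$ to make the series converge for small $t$ and then globalizing by iteration together with the $L^2$ contractivity of $e^{-tH}$. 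Once this is handled the remaining arguments are routine.
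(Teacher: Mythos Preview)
Your argument for $\mathrm{(DG_4)}$ via the Davies twisted form with affine phase is essentially the same as the paper's, and is fine.

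For $\mathrm{(GE_4)}$ the two approaches diverge, and yours has a real gap. You correctly isolate the obstacle: the Nash route needs $\sup_{t>0}\|e^{-tH}\|_{1\to 1}<\infty$, which is not automatic since the biharmonic semigroup is not positivity preserving. However, your proposed fix does not close it. The Duhamel expansion, estimated through $V\colon L^\infty\to L^1$ (the only mapping that $V\in L^1$ alone supplies) and $e^{-s\Delta^2}\colon L^1\to L^\infty$ with norm $\lesssim s^{-3/4}$, yields a $k$-th term bounded by $(C\|V\|_1)^k\,\Gamma(1/4)^k\,t^{k/4}/\Gamma(k/4{+}1)$; the resulting series is a Mittag--Leffler function $E_{1/4}(C'\|V\|_1 t^{1/4})$, which grows like $\exp(c\,t)$ as $t\to\infty$. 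Your ``globalisation via $L^2$-contractivity'' likewise only produces $\|e^{-tH}\|_{1\to\infty}\le C(t_0)$ for $t\ge t_0$, not the required $Ct^{-3/4}$ decay. So as written you obtain $\mathrm{(GE_4)}$ only for bounded $t$.

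The paper sidesteps this entirely by working on the $L^2\to L^\infty$ side, where no $L^1$ contractivity is needed. Since $V\ge 0$ gives the form domination $\langle(I+tH)f,f\rangle\ge\|(I+t\Delta^2)^{1/2}f\|_2^2$, one has $\|(I+t\Delta^2)^{1/2}(I+tH)^{-1/2}\|_{2\to 2}\le 1$ and hence $\|(I+tH)^{-1/2}\|_{2\to\infty}\le\|(I+t\Delta^2)^{-1/2}\|_{2\to\infty}\le Ct^{-3/8}$ for all $t>0$ directly from Fourier analysis of the free operator. This already yields the full on-diagonal decay $\|e^{-tH}\|_{2\to\infty}\le Ct^{-3/8}$ uniformly in $t$. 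For the off-diagonal part the paper does not combine with $\mathrm{(DG_4)}$ via a three-factor splitting; instead it feeds the twisted lower bound $\Re\langle H_{\rho\phi}f,f\rangle\ge\|\Delta f\|_2^2-c\rho^4\|f\|_2^2$ back into the same Sobolev argument to obtain $\|e^{-\rho\phi}e^{-tH}e^{\rho\phi}\|_{2\to\infty}\le Ct^{-3/8}e^{c\rho^4 t}$, and then runs the standard Davies optimisation in $\rho$.
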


\begin{proof}
Following \cite{Du} we consider the set of linear functions $\psi \colon \R^3 \to \R$ of the form
$\psi(x)=a \cdot x$, where $a=(a_1,a_2,a_3) \in {\bf S}^2$. Then for $\lambda \in \R$ we consider perturbed operator
$$
H_{\lambda \psi}=e^{-\lambda \psi}He^{\lambda \psi} = e^{-\lambda \psi}\Delta^2e^{\lambda \psi}+V
=\Delta_{\lambda \psi}^2 +V,
$$
where $\Delta_{\lambda \psi}=  e^{-\lambda \psi}\Delta e^{\lambda \psi}=
(\partial_{1}+a_1 \lambda)^2   +(\partial_{2}+a_2 \lambda)^2+(\partial_{3}+a_3 \lambda)^2   $, see
 \cite[Lemma 10]{Du}.  Note that
$$
\exp(-t H_{\lambda \psi})=e^{-\lambda \psi}\exp(-t H)e^{\lambda \psi}.
$$
By Lemma 10 of \cite{Du} there exists a constant $c>0$ such that
$$
\| \exp(-t\Delta_{\lambda \psi}^2)\|_{2\to 2} =e^{c\lambda^4t}.
$$
However  we assume that $V\ge 0$ so 
$$
\|e^{-\lambda \psi}\exp(-t H)e^{\lambda \psi}\|_{2\to 2} \le e^{c\lambda^4t},
$$
see also \cite{Da}.
Now consider  $a=(a_1,a_2,a_3) \in {\bf S}^2$ such that $\psi(x)-\psi(y)=|x-y|$.
Then
$$
\big\|P_{B(x, t^{1/4})} e^{-tH} P_{B(y, t^{1/4})}\big\|_{2\to {2}}
\le e^{c\lambda^4t-\lambda(|x-y|-2 t^{1/4})}.
$$
Taking infimum over $\lambda$ in the above
inequality proves estimates  ${\rm (DG_4)}$.

\bigskip

To prove Gaussian estimates ${\rm (GE_4)}$ we first note that
$\|(I+t\Delta^2)^{-1/2}\|_{2\to {\infty}}\le C t^{-3/4}$.
However we assume that $V(x) \ge 0$ for all $x \in \R^3$ so
$$
\langle (I+t(\Delta^2+V))f, f \rangle \ge \langle (I+t \Delta^2 )  f, f \rangle=\|(I+t\Delta^2)^{-1/2}f\|_2.
$$
Hence $\|(I+tH)^{-1/2}\|_{2\to {\infty}}\le C t^{-3/4}$ and
$$
\|\exp(-t H)\|_{2\to \infty }\le \|(I+tH)^{-1/2}\|_{2\to {\infty}}\|(I+tH)^{-1/2}\exp(-t H)\|_{2\to 2 }\le
C t^{-3/4}.
$$
This proves on-diagonal bounds for the corresponding heat kernel. Now it  suffices to ensure that all
assumptions of our abstract results hold. To prove off-diagonal Gaussian bounds we note that
by formula (9) of \cite{Du} for some constant $c>0$
$$
\Re \langle H_{\lambda \psi} f, f \rangle \ge  \langle (\Delta^2-\lambda^4 c) f, f \rangle.
$$
Now standard heat kernels theory argument  shows
$$
\|e^{-\lambda \psi}\exp(-t H)e^{\lambda \psi}\|_{2\to \infty } \le  C t^{-3/4} e^{c\lambda^4t}.
$$
This estimate  implies  off-diagonal Gaussian bounds ${\rm (GE_4)}$ by Davies' perturbation argument.
\end{proof}

Next   we establish  restriction type  estimates for spectral measure $dE_{\sqrt[4]{H}}(\lambda)$
 associated with the special higher order operator $H=\Delta^2+V$ with   potentials $V$ on $\mathbb{R}^3$.
Let   $H_0=\Delta^2$ be the self-adjoint extension operator on $L^2(\mathbb{R}^3)$.
Then we have $\sigma(H_0)=[0,\infty)$. For any $z\in \mathbb{C}\backslash \sigma(H_0)$,
  the resolvent
$$
R_0(z)=(H_0-z)^{-1}
$$
is well-defined on $L^2(\mathbb{R}^3)$.
We consider the boundary behavior of $R_0(z)$ as $z$
 approaches to some $\lambda>0$ since it is connected with the spectral measure by the limiting absorption principle:
\begin{eqnarray}\label{e5.7}
\frac{1}{2\pi i}\langle (R_0(\lambda+i0)-R_0(\lambda-i0))f,\ g\rangle=\langle dE_{H_0}(\lambda)f,\  g\rangle,
\ f, g\in \mathscr{S}(\mathbb{R}^3).
\end{eqnarray}
Let $\mu=\lambda+i\varepsilon$ where $ \lambda>0$  and $0<\varepsilon < \frac{\lambda}{10}$.
By elementary integration, it can be verfied  that

\begin{eqnarray*}
\int_{\mathbb{R}^3}\frac{e^{-i\xi x}}{|\xi|^2-\mu^2}\ d\xi=  \frac{e^{i\mu|x|}}{4\pi|x|}
\ \ \ \ {\rm and} \ \ \
\int_{\mathbb{R}^3}\frac{e^{-i\xi x}}{|\xi|^2+\mu^2}\ d\xi=  \frac{e^{-\mu|x|}}{4\pi|x|},
\end{eqnarray*}
which gives
 \begin{eqnarray} \label{e5.8}
\int_{\mathbb{R}^3}\frac{e^{-i\xi x}}{|\xi|^4-\mu^4}\ d\xi=  \frac{1}{2 \mu^2}\Big(\frac{e^{i\mu|x|}}{4\pi|x|}-\frac{e^{-\mu|x|}}{4\pi|x|}\Big{)}=
\frac{e^{i\mu|x|}}{4(1+i)\pi\mu}\bigg(\frac{1-e^{-(1-i)\mu|x|}}{(1-i)\mu|x|}\bigg).
\end{eqnarray}
Hence $K(\mu^4,x)$ - the Green kernel of $(\Delta^2-\mu^4)^{-1}$ is given by the formula
 \begin{eqnarray}\label{e5.9}
K(\mu^4,x)=\frac{e^{i\mu|x|}}{4(1+i)\pi\mu}\bigg(\frac{1-e^{-(1-i)\mu|x|}}{(1-i)\mu|x|}\bigg).
 \end{eqnarray}

\begin{proposition}\label{prop5.5}
Let
 $
R_0(\mu^4)=(H_0-\mu^4)^{-1}
 $ where  $H_0=\Delta^2$  acts on $ \mathbb{R}^3$.
If $\mu=\lambda+i\varepsilon$ with $ \lambda>0$  and $0< |\varepsilon |< \frac{\lambda}{10}$,
then for every $1\le p\le \frac{4}{3}$,
\begin{eqnarray}\label{e5.10}
\|R_0(\mu^4)\|_{p\to p'}\le C |\mu|^{3(\frac{1}{p}-\frac{1}{p'})-4}.
\end{eqnarray}
In particular, the following estimates of  incoming and outcoming operators are satisfied 
\begin{eqnarray}\label{e5.11}
\|R_0(\lambda^4-i0)\|_{p\to p'}=\|R_0(\lambda^4+i0)\|_{p\to
p'}\le C \lambda^{3(\frac{1}{p}-\frac{1}{p'})-4}.
\end{eqnarray}
\end{proposition}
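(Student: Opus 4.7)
The plan is to prove \eqref{e5.10} by establishing bounds at the two endpoints $p=1$ and $p=4/3$ and interpolating via Riesz--Thorin; estimate \eqref{e5.11} then follows by passing to the limit $\varepsilon\to 0^{\pm}$. Using the operator identity $R_0(z)^{\ast}=R_0(\bar{z})$ together with duality ($\|T\|_{p\to p'}=\|T^{\ast}\|_{p\to p'}$ when $1/p+1/p'=1$), I may assume $\varepsilon=\Im\mu>0$ throughout; the case $\varepsilon<0$ follows from the case $\varepsilon>0$ applied to $\bar{\mu}$ after taking adjoints.

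\textbf{The two endpoints.} At $(p,p')=(1,\infty)$ the operator norm equals $\|K(\mu^4,\cdot)\|_{L^\infty}$, so the task reduces to the pointwise bound $|K(\mu^4,x)|\le C|\mu|^{-1}$. The hypothesis $|\varepsilon|<\lambda/10$ forces $\Re((1-i)\mu)=\lambda+\varepsilon>0$, so $w:=-(1-i)\mu|x|$ satisfies $\Re w\le 0$; the elementary inequality $|1-e^w|\le|w|$ (valid when $\Re w\le 0$, by integrating $e^s$ along the straight segment from $0$ to $w$) gives $|(1-e^w)/w|\le 1$, while $|e^{i\mu|x|}|=e^{-\varepsilon|x|}\le 1$. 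Inserting these into \eqref{e5.9} yields the bound. At the Stein--Tomas endpoint $(p,p')=(4/3,4)$, the partial-fraction identity
\[
R_0(\mu^4)=\frac{1}{2\mu^2}\Bigl[(-\Delta-\mu^2)^{-1}-(-\Delta+\mu^2)^{-1}\Bigr]
\]
reduces the task to bounding each second-order resolvent in $L^{4/3}\to L^4$ by $C|\mu|^{-1/2}$. The positive-definite piece $(-\Delta+\mu^2)^{-1}$, whose kernel $e^{-\mu|x|}/(4\pi|x|)$ has modulus $e^{-\lambda|x|}/(4\pi|x|)$, is handled by direct integration giving $\|e^{-\mu|\cdot|}/(4\pi|\cdot|)\|_{L^2(\R^3)}=(8\pi\lambda)^{-1/2}$, followed by Young's inequality with exponents $1+1/4=1/2+3/4$; since $\lambda\asymp|\mu|$, this produces the required bound. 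For the Helmholtz piece $(-\Delta-\mu^2)^{-1}$, I invoke the Stein--Tomas-type uniform resolvent estimate
\[
\|(-\Delta-z)^{-1}\|_{L^{4/3}\to L^4}\le C|z|^{-1/4},\qquad z\in\C\setminus[0,\infty),
\]
at $z=\mu^2$; this is the restriction-type bound afforded by the nonzero Gaussian curvature of the unit sphere in $\R^3$ (compare the spectral-measure version in Proposition~\ref{prop5.2}).

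\textbf{Interpolation, limits, and the main obstacle.} Riesz--Thorin interpolation between the two endpoints yields \eqref{e5.10} for every $p\in[1,4/3]$: with interpolation parameter $\theta=4(1-1/p)\in[0,1]$, the resulting exponent $-(1-\theta)-(5/2)\theta=-1-3\theta/2$ equals $6/p-7=3(1/p-1/p')-4$, as required. For \eqref{e5.11}, the kernels $K(\mu^4,\cdot)$ converge pointwise on $\R^3\setminus\{0\}$ as $\varepsilon\to 0^{\pm}$, and the uniform bound in \eqref{e5.10} passes to the limit in operator norm; the equality of the two one-sided limit norms reduces once more to the adjoint identity and duality. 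The only substantive technical ingredient is the Helmholtz resolvent estimate at the Stein--Tomas endpoint, which is the one place where the curvature of the unit sphere enters essentially; the Yukawa bound, the pointwise kernel estimate, and the Riesz--Thorin step are essentially computational.
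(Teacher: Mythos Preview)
Your proof is correct and follows essentially the same route as the paper: the $L^1\to L^\infty$ endpoint via the pointwise kernel bound $|K(\mu^4,x)|\le C|\mu|^{-1}$, the $L^{4/3}\to L^4$ endpoint via the partial-fraction splitting into the Helmholtz piece (handled by the Kenig--Ruiz--Sogge uniform resolvent estimate) and the Yukawa piece, and Riesz--Thorin interpolation. The only cosmetic differences are that the paper first rescales to $\lambda=1$ before invoking \cite{KRS} and dispatches the Yukawa piece by citing ``standard Gaussian bounds,'' whereas you keep the general $\mu$ and treat the Yukawa piece by the explicit Young's-inequality computation with the $L^2$ norm of the kernel.
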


\medskip

\begin{proof} Observe that by \eqref{e5.8} and \eqref{e5.9}, there exists a constant $C>0$ such that
  $|K(\mu^4,x)|\leq C|\mu|^{-1}.$
By  Young's inequality,
$$
\|R_0(\mu^4)f\|_\infty\leq C |\mu|^{-1}\|f\|_1.
$$
Now by the interpolation (see \cite{BL}),
  it suffices to verify \eqref{e5.10} for   $p={4/3}$, that is,
$
\|R_0(\mu^4)f\|_4\leq C |\mu|^{-5/2}\|f\|_{4/3}.
$
Observe  that
$|\mu|\sim \lambda$, by the scaling in $\lambda$   it reduces  to show that uniformly
\begin{eqnarray}
\label{e5.12}
\|R_0((1+i\epsilon)^4)f\|_4\leq C \|f\|_{4/3},  \ 0< \epsilon <1/10.
\end{eqnarray}

Now one can write
$$
R_0((1+i\epsilon)^4)={1\over 2(1+i\epsilon)^2}\big((-\Delta-(1+i\epsilon)^2)^{-1}-(-\Delta+(1+i\epsilon)^2)^{-1}\big).
$$
To estimate $L^{4/3}$ to $L^4$ norm of $(-\Delta-(1+i\epsilon)^2)^{-1}$,  one can use the argument from the proof of Theorem 2.3 in \cite{KRS},
see also Lemma 4   in \cite{GS}.
$L^{4/3}$ to $L^4$ norm estimates of $(-\Delta+(1+i\epsilon)^2)^{-1}$
are straightforward consequence of the standard Gaussian bounds.   \end{proof}

\begin{proposition}\label{prop5.6} Let
 $
R_0(\mu^4)=(H_0-\mu^4)^{-1}
 $ where  $H_0=\Delta^2$  acts on $ \mathbb{R}^3$ and  $\mu=\lambda+i\varepsilon$ where
 $ \lambda>0$  and $0\le |\varepsilon| < \frac{\lambda}{10}$. Suppose that
 $V\in L^1(\mathbb R^3)\cap L^2(\mathbb R^3)$.

 Then
  \begin{itemize}

\item[(i)]
  For $4\leq p \le \infty$, the operator map $\mu\mapsto R_0(\mu^4)V $ is continuous from the cone
 domain $\{\mu=\lambda\pm i\varepsilon, \lambda> 0 \ and\  0\le  \varepsilon < \frac{\lambda}{10}\} $
 to the space of bounded operators on $L^p(\mathbb R^3)$.

 \smallskip

 \item[(ii)]
 For $4\leq p \le \infty$
there exists a positive constant  $\lambda_0$ such that for all $\lambda\ge \lambda_0>0$
the operator $I+ R_0(\mu^4)V$ is invertible on $L^p(\mathbb R^3)$ and
$$\sup_{\lambda\ge \lambda_0}\|(I+ R_0(\mu^4)V)^{-1}\|_{p\to p}\le C.$$
\end{itemize}
\end{proposition}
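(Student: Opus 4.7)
The plan is to establish (i) and (ii) simultaneously by proving that $\|R_0(\mu^4)V\|_{L^p\to L^p}\to 0$ as $\lambda\to\infty$ for every $p\in[4,\infty]$, together with continuity of this map in $\mu$ throughout the parameter region. Once this is in place, (ii) follows immediately: choose $\lambda_0$ so large that $\|R_0(\mu^4)V\|_{p\to p}\le 1/2$ for all $\lambda\ge \lambda_0$, and then the Neumann series
\begin{equation*}
(I+R_0(\mu^4)V)^{-1}=\sum_{k\ge 0}(-R_0(\mu^4)V)^k
\end{equation*}
converges in $\mathcal{B}(L^p)$ with norm at most $2$, uniformly in $\mu$.

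For the norm bounds I would treat the endpoints $p=4$ and $p=\infty$ directly via Proposition~\ref{prop5.5} combined with H\"older's inequality, and interpolate between them. At $p=4$, Proposition~\ref{prop5.5} with exponent $4/3$ gives $\|R_0(\mu^4)\|_{4/3\to 4}\le C|\mu|^{-5/2}$, while $1/2+1/4=3/4$ shows that multiplication by $V\in L^2$ sends $L^4$ into $L^{4/3}$ with $\|Vf\|_{4/3}\le \|V\|_2\|f\|_4$; composing these yields
\begin{equation*}
\|R_0(\mu^4)V\|_{4\to 4}\le C|\mu|^{-5/2}\|V\|_2.
\end{equation*}
At $p=\infty$, Proposition~\ref{prop5.5} with exponent $1$ (equivalently, the pointwise bound $|K(\mu^4,z)|\le C/|\mu|$ read off from \eqref{e5.9}) gives $\|R_0(\mu^4)\|_{1\to\infty}\le C|\mu|^{-1}$, and multiplication by $V\in L^1$ sends $L^\infty$ into $L^1$, so
\begin{equation*}
\|R_0(\mu^4)V\|_{\infty\to\infty}\le C|\mu|^{-1}\|V\|_1.
\end{equation*}
Riesz--Thorin interpolation then yields $\|R_0(\mu^4)V\|_{p\to p}\le C_p|\mu|^{-\gamma_p}$ with $\gamma_p>0$ for every $p\in[4,\infty]$, proving the claim.

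For continuity in part (i), the key point is that the kernel $K(\mu^4,z)$ in \eqref{e5.9} is jointly continuous in $(\mu,z)$ (extending at $z=0$ to $(1+i)/(8\pi\mu)$) and enjoys the far-field decay $|K(\mu^4,z)|\le C(|\mu|^2|z|)^{-1}$. For the $L^\infty\to L^\infty$ difference $R_0(\mu^4)V-R_0(\mu'^4)V$ I would split the integral defining its kernel into $\{|z|\le R\}$ and $\{|z|>R\}$: the second piece is bounded by $C\|V\|_1/R$ uniformly in $x$, while the first is bounded by $\|V\|_1\sup_{|z|\le R}|K(\mu^4,z)-K(\mu'^4,z)|$, which tends to zero as $\mu'\to\mu$ by uniform continuity of $K$ on compacta. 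Letting first $\mu'\to\mu$ and then $R\to\infty$ gives continuity in the $L^\infty$ operator norm. The $L^4\to L^4$ continuity reduces via H\"older to continuity of $R_0(\mu^4):L^{4/3}\to L^4$, which is the standard limiting-absorption content underlying Proposition~\ref{prop5.5}. Intermediate $p$ then follow by interpolating the continuity bounds. The principal technical delicacy is obtaining \emph{uniformity in} $x$ in the $L^\infty$ continuity argument: the $1/|z|$ tail of the kernel is not integrable, and one must rely on $V\in L^1$ (rather than merely $V\in L^2$) to tame the far region uniformly. This is precisely where the joint hypothesis $V\in L^1\cap L^2$ is used.
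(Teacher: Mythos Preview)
Your argument for (ii) is correct and is exactly the paper's: factor $R_0(\mu^4)V$ as $R_0(\mu^4)\circ M_V$ with $M_V\colon L^p\to L^{p'}$ bounded for $4\le p\le\infty$ since $V\in L^1\cap L^2$, apply Proposition~\ref{prop5.5} to get $\|R_0(\mu^4)V\|_{p\to p}\to 0$ as $|\mu|\to\infty$, and invert via the Neumann series. The only cosmetic difference is that the paper handles all $p\in[4,\infty]$ at once (noting $M_V\colon L^p\to L^{p'}$ directly, since the required H\"older exponent lies in $[1,2]$), whereas you treat the endpoints and interpolate.

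For (i) the routes diverge. The paper writes
\[
R_0(\mu^4)V=\frac{1}{2\mu^2}\bigl((-\Delta-\mu^2)^{-1}V-(-\Delta+\mu^2)^{-1}V\bigr)
\]
and invokes Lemmas~8 and~10 of Goldberg--Schlag \cite{GS} for the continuity of each second-order piece. Your approach is instead a direct, self-contained kernel argument: split $\int|K(\mu^4,x-y)-K(\mu'^4,x-y)|\,|V(y)|\,dy$ into $\{|x-y|\le R\}$ and $\{|x-y|>R\}$, control the tail uniformly in $x$ via the pointwise bound $|K|\lesssim (|\mu|^2|z|)^{-1}$ and $V\in L^1$, and use uniform continuity of $K$ on compacta for the near part. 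This buys you an elementary proof that does not appeal to \cite{GS}, at the cost of doing the $L^{4/3}\to L^4$ continuity by hand (which you defer to ``the standard limiting-absorption content underlying Proposition~\ref{prop5.5}''; this is fine, but note that the paper's decomposition makes that step equally short, since $(-\Delta+\mu^2)^{-1}$ is manifestly analytic in $\mu$ and the continuity of $(-\Delta-\mu^2)^{-1}$ up to the real axis is precisely what \cite{GS,KRS} provide). One minor wording issue: your phrase ``letting first $\mu'\to\mu$ and then $R\to\infty$'' should be reversed---choose $R$ first (depending on $\epsilon$ and a local lower bound on $|\mu|$), then send $\mu'\to\mu$.
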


\begin{proof}
Note that, for the case, we can write the $R_0(\mu^4)V$ into the following two parts:
$$R_0(\mu^4)V=((-\Delta-\mu^2)^{-1}V-(-\Delta+\mu^2)^{-1}V)/2\mu^2 . $$
Hence the proof  of (i)   follows from Lemmas 8 and   10 of \cite{GS}.

Next we prove (ii). Define the operator $M_V$ by the formula
$M_Vf(x)=V(x)f(x)$ and we note that if $V\in L^1(\mathbb R^3)\cap L^2(\mathbb R^3)$
then $ \|M_V\|_{p\to p'}< \infty $ for all $4\le p \le \infty $.
Now by Proposition~\ref{prop5.5} there exists a constant $\lambda_0>0$
$$\|R_0(\mu^4)V\|_{p\to p}\le \|R_0(\mu^4)\|_{p'\to p}\|M_V\|_{p\to p'} \le  {1\over 2}   $$
for all  $\lambda\ge \lambda_0$.  This proves (ii) and  concludes the proof of Proposition \ref{prop5.6}.
\end{proof}

\begin{proposition}\label{prop5.7} Suppose that
   $H=\Delta^2+V$ on ${{\mathbb R}^3}$ with a real-valued
    $V\in L^1(\mathbb R^3)\cap L^2(\mathbb R^3)$.

 Then
there exists a $\lambda_0>0$ such that
\begin{eqnarray*}
\|dE_{\sqrt[4]{H}}(\lambda)\|_{L^p\to L^{p'}}\leq C \lambda^{3({1\over p}-{1\over p'})-1}
\end{eqnarray*}
for all $\lambda\ge \lambda_0$ and $1\leq p\leq 4/3$.
\end{proposition}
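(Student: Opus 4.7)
The plan is to combine the limiting absorption principle for $H=\Delta^2+V$ with the perturbative resolvent identity and the free resolvent bounds of Proposition~\ref{prop5.5}. For $\mu=\lambda+i\varepsilon$ with $\lambda\ge\lambda_0$ and $0<|\varepsilon|<\lambda/10$, set $R(\mu^4)=(H-\mu^4)^{-1}$. The identity $H-\mu^4=(H_0-\mu^4)+V$ yields
\begin{equation*}
R(\mu^4)=(I+R_0(\mu^4)V)^{-1}R_0(\mu^4),
\end{equation*}
as an identity of bounded operators on $L^{p'}(\mathbb{R}^3)$ once the inverse factor makes sense there. For $1\le p\le 4/3$ we have $p'\ge 4$, so Proposition~\ref{prop5.6}(ii) supplies $\lambda_0>0$ for which $(I+R_0(\mu^4)V)^{-1}$ is bounded on $L^{p'}(\mathbb{R}^3)$ uniformly for $\lambda\ge\lambda_0$. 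Combining this with the free resolvent bound of Proposition~\ref{prop5.5} gives
\begin{equation*}
\|R(\mu^4)\|_{p\to p'}\le \|(I+R_0(\mu^4)V)^{-1}\|_{p'\to p'}\,\|R_0(\mu^4)\|_{p\to p'}\le C\lambda^{3(\frac{1}{p}-\frac{1}{p'})-4},
\end{equation*}
uniformly in $\varepsilon$.

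Next I would pass to the boundary $\varepsilon\to 0^{\pm}$. Proposition~\ref{prop5.6}(i) provides continuity of $\mu\mapsto R_0(\mu^4)V$ on $L^{p'}$ up to the real axis, so a Neumann-series argument together with the uniform bound from Proposition~\ref{prop5.6}(ii) gives norm-continuity of $(I+R_0(\mu^4)V)^{-1}$ in the same topology. Together with the free-resolvent bounds extended to the boundary via \eqref{e5.8}--\eqref{e5.9}, this yields
\begin{equation*}
\|R(\lambda^4\pm i0)\|_{p\to p'}\le C\lambda^{3(\frac{1}{p}-\frac{1}{p'})-4},\qquad \lambda\ge\lambda_0.
\end{equation*}

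Finally, I would invoke the limiting absorption principle, which in view of \eqref{e5.7} adapted to $H$ reads
\begin{equation*}
\langle dE_H(\lambda^4)f,g\rangle=\frac{1}{2\pi i}\langle(R(\lambda^4+i0)-R(\lambda^4-i0))f,g\rangle.
\end{equation*}
Combining this with the change of variable $dE_{\sqrt[4]{H}}(\lambda)=4\lambda^3\,dE_H(\lambda^4)$ gives
\begin{equation*}
\|dE_{\sqrt[4]{H}}(\lambda)\|_{p\to p'}\le \frac{4\lambda^3}{\pi}\sup_{\pm}\|R(\lambda^4\pm i0)\|_{p\to p'}\le C\lambda^{3(\frac{1}{p}-\frac{1}{p'})-1},
\end{equation*}
which is the desired estimate.

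The main obstacle I anticipate is not the computation itself but justifying the limiting absorption principle for $H$ rather than for $H_0$: one must verify that the perturbed resolvent genuinely extends continuously to the real axis in the $L^p\to L^{p'}$ topology, which requires controlling the inverse $(I+R_0(\mu^4)V)^{-1}$ uniformly as $\varepsilon\downarrow 0$. The Neumann-series approach works precisely because $\lambda\ge\lambda_0$ forces $\|R_0(\mu^4)V\|_{p'\to p'}<1/2$ by Proposition~\ref{prop5.6}; this is the reason the statement is restricted to $\lambda\ge\lambda_0$ rather than claimed globally.
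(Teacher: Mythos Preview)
Your proposal is correct and follows essentially the same route as the paper: the resolvent identity $R(\mu^4)=(I+R_0(\mu^4)V)^{-1}R_0(\mu^4)$, combined with Propositions~\ref{prop5.5} and~\ref{prop5.6}, then the limiting absorption principle. You have in fact supplied more detail than the paper does, in particular the change of variable $dE_{\sqrt[4]{H}}(\lambda)=4\lambda^3\,dE_H(\lambda^4)$ and the passage to the boundary, both of which the paper leaves implicit.
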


\medskip

 \noindent
\begin{proof} Let   $\mu=\lambda+i\varepsilon$ where $ \lambda\ge\lambda_0>0$  and $0< |\varepsilon| < \frac{\lambda}{10}$.
 We denote  by $R(\mu^4)=(H-\mu^4)^{-1}$
  the resolvent of $H=\Delta^2+V$ on $L^2({\mathbb R}^3)$. Note that
\begin{eqnarray}
\label{e5.13}R(\mu^4)=(I+R_0(\mu^4)V)^{-1}R_0(\mu^4).
\end{eqnarray}

Hence it follows that from Propositions  \ref{prop5.5} and \ref{prop5.6}
$$\|R(\mu^4)f\|_{p'}\le \|(I+R_0(\mu^4)V)^{-1}\|_{p'\to p'} \|R_0(\mu^4)f\|_{p'}\le C{(\lambda_0, V)} \  |\mu|^{3({1\over p}-{1\over p'})-1}
\ \|f\|_{p}.$$
By the limit absorption principle the above estimates imply  Proposition~\ref{prop5.7}.
 \end{proof}

\medskip

Finally, we are now  able to state the following results describing spectral multipliers for the biharmonic
operators with some potential $V$ on ${\mathbb R^3}$

 \begin{theorem}\label{th5.8}
 Suppose that
   $H=\Delta^2+V$ on ${{\mathbb R}^3}$ with a  positive real-valued
    $0\leq V\in L^1(\mathbb R^3)\cap L^2(\mathbb R^3)$
   and that  $1\le p \leq 4/3$.
     Next assume that    $F$ is a bounded Borel  function such that  $\supp F\subseteq [1/4,4]$
 and $F \in W^{\alpha}_2({\mathbb R})$
for some $\alpha>3(1/p-1/2)$. Then for every $ p < r\le p'$, $F(t {H})$ is bounded on $L^r(X)$ for all $t>0$. In addition
\begin{equation}
\label{e5.14}
\sup_{t < 1/(16\lambda_0)}\|F(t {H})\|_{r\to r} \leq
C_r\|F\|_{W^{\alpha}_2},
\end{equation}
and
\begin{equation}
\label{e5.15}
\sup_{t \geq 1/(16\lambda_0)}\|F(t {H})\|_{r\to r} \leq
C_r\|F\|_{W^{\alpha}_{\infty}}
\end{equation}
for some constant $\lambda_0>0$    as in Proposition~\ref{prop5.7}.
\end{theorem}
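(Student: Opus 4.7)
The plan is to deduce Theorem~\ref{th5.8} from Theorem~\ref{th4.5} applied to $L=H$ with $m=4$, $n=3$, and $q=2$, once the hypotheses are assembled from Propositions~\ref{prop5.4} and \ref{prop5.7}. Because Theorem~\ref{th4.5} is stated for $F(t\sqrt[m]{L})$ rather than $F(tL)$, I first pass to the auxiliary function $G(\lambda):=F(\lambda^4)$; its support lies in $[2^{-1/2},2^{1/2}]\subseteq[1/4,4]$, the Sobolev norms obey $\|G\|_{W^{\alpha}_q}\lesssim\|F\|_{W^{\alpha}_q}$ for $q\in\{2,\infty\}$ by smoothness of $\lambda\mapsto\lambda^4$ near the support, and the identity $F(tH)=G(t^{1/4}\sqrt[4]{H})$ translates bounds on $G(s\sqrt[4]{H})$ into bounds on $F(tH)$.

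To verify the hypotheses of Theorem~\ref{th4.5}: Proposition~\ref{prop5.4} supplies both the Davies-Gaffney estimate $(DG_4)$ and the full Gaussian bound $(GE_4)$, and the latter yields condition $(G_{p,2,4})$ for every $1\le p\le 2$ in the standard manner (cf.\ Proposition~\ref{prop2.3}). For the restriction-type assumption I use Proposition~\ref{prop4.4}: on $\mathbb{R}^3$ with Lebesgue measure the uniform volume regularity $V(x,r)\sim r^3$ holds, so Proposition~\ref{prop4.4} applies with $n=3$, $m=4$ and gives the equivalence between $(R_{p,4})_{\lambda_0}$ and $(ST^{2}_{p,2,4})_{2\lambda_0}$; Proposition~\ref{prop5.7} provides $(R_{p,4})_{\lambda_0}$ for every $1\le p\le 4/3$, so the localized condition $(ST^{2}_{p,2,4})_{R_0}$ holds with $R_0:=2\lambda_0$. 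Theorem~\ref{th4.5} applied to $G$ then delivers $\sup_{s<1/(8R_0)}\|G(s\sqrt[4]{H})\|_{r\to r}\le C_r\|G\|_{W^{\alpha}_2}$ together with the companion bound for $s\ge 1/(8R_0)$ controlled by $\|G\|_{W^{\alpha}_\infty}$, initially for every $p<r\le 2$; duality via $F(tH)^{\ast}=\bar F(tH)$ (together with $\|\bar F\|_{W^{\alpha}_q}=\|F\|_{W^{\alpha}_q}$) extends the range to $p<r\le p'$. Substituting $s=t^{1/4}$ produces \eqref{e5.14} and \eqref{e5.15}.

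The only non-routine point is the bookkeeping of threshold constants: the factor of two between $\lambda_0$ (as it appears in $(R_{p,4})_{\lambda_0}$) and $R_0=2\lambda_0$ (as it appears in $(ST^{2}_{p,2,4})_{R_0}$) is dictated by Proposition~\ref{prop4.4}, while the cutoff $1/(8R_0)=1/(16\lambda_0)$ for the variable $s$ in Theorem~\ref{th4.5} must be re-expressed in terms of $t$ via the change of variables $s=t^{1/4}$. All the substantive analytic work has been carried out in Section~\ref{sec4} and in the preparatory Propositions~\ref{prop5.4}--\ref{prop5.7}; the derivation of Theorem~\ref{th5.8} amounts to a direct assembly of these ingredients, with no genuinely new obstacle to overcome.
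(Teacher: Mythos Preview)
Your proposal is correct and follows essentially the same route as the paper, which simply cites Propositions~\ref{prop5.4}, \ref{prop5.7}, \ref{prop4.4} and Theorem~\ref{th4.5} as a one-line proof. You are in fact more careful than the paper about the passage from $F(tH)$ to $G(t^{1/4}\sqrt[4]{H})$, the duality step extending the range to $p<r\le p'$, and the threshold bookkeeping; note that the exact value of the cutoff in $t$ is immaterial since $\lambda_0$ in Proposition~\ref{prop5.7} is not canonical and can be enlarged to absorb the change of variables.
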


\begin{proof}  The result is   a straightforward consequence of Propositions~\ref{prop5.4}, ~\ref{prop5.7}, ~\ref{prop4.4} and  Theorem~\ref{th4.5}.
 \end{proof}

\subsection{   Laplace type operators acting on fractals}
 Theorem~\ref{th4.2} can be applied to any operator which satisfies estimates ${\rm (GE_m)}$
and for which the ambient spaces satisfies the doubling condition. A compelling  class of such
operators is considered in the theory of  diffusion processes on fractals, see for example
\cite{BP, BS, Ki, Str1, Str2}.  One of the most well known space of this type is Sierpi\'nski gasket SG
see for example \cite{Ki,  Str2}. The Laplace operator on the Sierpi\'nski gasket SG (Neumann or Dirichlet)
satisfies Gaussian bound of order $m = \log 5/(\log 5-\log 3)$ and \eqref{e2.2}   holds with with the homogeneous
dimension given by  $n = \log 3/(\log 5 - \log 3) =2.1506601\ldots$,  see \cite{BP, Str1, Str2}.
Now application of  Theorem~\ref{th4.2} to this setting yields the following result.

 \begin{theorem} \label{th5.9} Suppose that $L$ is the Laplacian on the
 Sierpi\'nski gasket.  Let $1\leq p<2$.
   Then for any  bounded Borel
function $F$ such that
$\sup_{t>0}\|\eta\, \delta_tF\|_{W^{\alpha}_{\infty}}<\infty $ for    some
$\alpha> n(1/p-1/2),$ the operator
$F(L)$ is bounded on $L^r(X)$ for all $p<r<p'$.
In addition,
\begin{eqnarray*}
   \|F(L)  \|_{r\to r}\leq    C_\alpha\Big(\sup_{t>0}\|\eta\, \delta_tF\|_{W^{\alpha}_{\infty}}
   + |F(0)|\Big),
\end{eqnarray*}
where  $n = \log 3/(\log 5 - \log 3)
 $ and $m =
 \log 5/(\log 5-\log 3)$.
 \end{theorem}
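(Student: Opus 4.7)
The plan is to reduce Theorem~\ref{th5.9} directly to Theorem~\ref{th4.2}. All that is required is to check that the Laplacian $L$ on the Sierpi\'nski gasket $SG$ (with either Neumann or Dirichlet boundary conditions) fits the abstract framework of Section~2, with the specific constants $m=\log 5/(\log 5-\log 3)$ and $n=\log 3/(\log 5-\log 3)$ recorded in the statement.

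First I would recall that $SG$, equipped with its Euclidean metric $d$ and its natural self-similar (Hausdorff) measure $\mu$, is a metric measure space of homogeneous type. The relevant volume growth, established in \cite{BP, Str1, Str2}, gives $V(x,r)\asymp r^n$ for $r$ bounded and $n=\log 3/(\log 5-\log 3)$, which in particular verifies the doubling condition \eqref{e1.2} and the homogeneous dimension bound \eqref{e1.3}. Second, the analytically defined Laplacian on $SG$ is a non-negative self-adjoint operator on $L^2(SG,\mu)$, and the deep heat kernel estimates of Barlow--Perkins \cite{BP} (see also \cite{Str1, Str2}) show that the associated semigroup $e^{-tL}$ admits a continuous kernel $p_t(x,y)$ satisfying the sub-Gaussian bound
\begin{equation*}
|p_t(x,y)|\leq \frac{C}{V(x,t^{1/m})}\exp\!\Big(-c\Big(\frac{d(x,y)^m}{t}\Big)^{\!\!1/(m-1)}\Big),\qquad x,y\in SG,\ t>0,
\end{equation*}
with walk dimension $m=\log 5/(\log 5-\log 3)$. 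This is exactly the condition $(\mathrm{GE}_m)$ required in Theorem~\ref{th4.2}.

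With these two facts in hand, the remaining step is purely a matter of invoking the abstract theorem. Given $1\le p<2$ and $\alpha>n(1/p-1/2)$, and a bounded Borel $F$ with $\sup_{t>0}\|\eta\,\delta_tF\|_{W^{\alpha}_{\infty}}<\infty$, Theorem~\ref{th4.2} immediately yields that $F(L)$ extends to a bounded operator on $L^r(SG)$ for all $p<r<p'$, together with the quantitative bound
\begin{equation*}
\|F(L)\|_{r\to r}\le C_{\alpha}\Big(\sup_{t>0}\|\eta\,\delta_tF\|_{W^{\alpha}_{\infty}}+|F(0)|\Big),
\end{equation*}
which is the desired conclusion. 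Note that the $|F(0)|$ term absorbs the potential contribution of the zero eigenvalue (present e.g.\ under Neumann boundary conditions on compact $SG$), which is handled in exactly the same way as in Theorems~\ref{th4.1}--\ref{th4.2}.

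There is, strictly speaking, no real obstacle in this argument: the analytic content is entirely packaged into the Barlow--Perkins sub-Gaussian heat kernel estimate. The only subtle point worth flagging is that $SG$ is not a length space in the intrinsic sense compatible with the heat propagation (in fact finite speed propagation for the wave equation $\cos(t\sqrt L)$ fails, cf.\ \cite{CouS, S2}); this is precisely why one cannot appeal to the framework of \cite{COSY} and must use the generalized Gaussian framework developed in this paper, where the $m$-th order Davies--Gaffney/Gaussian estimates replace finite propagation speed as the basic kernel assumption.
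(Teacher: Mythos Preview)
Your proposal is correct and follows exactly the paper's own approach: the paper's proof consists of the single sentence ``The result is direct consequence of Theorem~\ref{th4.2}.'' You have simply spelled out in more detail why the hypotheses of Theorem~\ref{th4.2} (doubling and $(\mathrm{GE}_m)$ with the stated $m$ and $n$) are satisfied on the Sierpi\'nski gasket, which is precisely what the paragraph preceding Theorem~\ref{th5.9} already records.
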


\begin{proof} The result is direct consequence of   Theorem~\ref{th4.2}.
 \end{proof}

We do not know however if  Theorem  \ref{th5.9} is sharp. The case $p=1$ of this result
is discussed in details in \cite{DOS}. Theorem  \ref{th5.9} can be extend to include broader class of fractals.
One simple  class of possible  generalization can be given by products of any number of copies of Sierpi\'nski gaskets.

\bigskip

\noindent
{\bf Acknowledgements:} A. Sikora was partly  supported by
Australian Research Council  Discovery Grant DP 110102488. L.  Yan was supported by
 NNSF of China (Grant No.  10925106),
 Guangdong Province Key Laboratory of Computational Science
 and Grant for Senior Scholars from the Association of Colleges and Universities of Guangdong.
  Part of this work was done while
 L. Yan was visiting Macquarie University. His visit was partly supported
 by ARC Discovery Grant DP 110102488. He wishes
 to thank X.T. Duong for the invitation.
 X. Yao was  supported by NCET-10-0431 and the Special Fund for Basic Scientific Research of Central Colleges
 (No. CCNU12C01001).

\end{document}